\let\oldtocsection=\tocsection
\let\oldtocsubsection=\tocsubsection
\let\oldtocsubsubsection=\tocsubsubsection
\renewcommand{\tocsection}[2]{\hspace{0em}\oldtocsection{#1}{#2}}
\renewcommand{\tocsubsection}[2]{\hspace{1em}\oldtocsubsection{#1}{#2}}
\renewcommand{\tocsubsubsection}[2]{\hspace{2em}\oldtocsubsubsection{#1}{#2}}
\def\ddb#1{\sqrt{-1}\partial\bar{\partial}#1}
\def\dt#1{\frac{\partial}{\partial t}#1}
\newcommand{\dr}{\omega}
\newcommand{\al}{\alpha}
\newcommand{\de}{\delta}
\newcommand{\e}{\epsilon}
\newcommand{\ka}{K\"{a}hler}
\newcommand{\MA}{Monge-Amp\`{e}re }
\theoremstyle{plain}
\newtheorem{theorem}{Theorem}[section]
\newtheorem{lemma}[theorem]{Lemma}
\newtheorem{corollary}[theorem]{Corollary}
\newtheorem{assumption}[theorem]{Assumption}
\newtheorem{remark}[theorem]{Remark}
\theoremstyle{definition}
\newtheorem{definition}[theorem]{Definition}
\numberwithin{equation}{section}
\begin{document}
	\title[K\"{a}hler-Ricci flow]{The K\"{a}hler-Ricci flow with log canonical singularities}
	\author{Albert Chau$^1$}
	\author{Huabin Ge$^2$}
	\author{Ka-Fai Li}
	\author{Liangming Shen$^3$}
	\address{Department of Mathematics, The University of British Columbia, Vancouver, B.C., Canada V6T 1Z2}
	\email{chau@math.ubc.ca}
	\address{School of Mathematics, Renmin University of China, Beijing, 100872, P.R. China }
	\email{hbge@bjtu.edu.cn}
	\address{Morningside Center of Mathematics, Academy of Mathematics and Systems Science, Chinese Academy of Sciences, Beijing, 100190, P.R. China}
	\email{kfli@amss.ac.cn}
	\address{School of Mathematical Sciences, Beihang University, and Key Laboratory of Mathematics Informatics Behavioral Semantics, Ministry of Education, Beijing 100191, P.R. China}
	\email{lmshen@buaa.edu.cn}
	\thanks{$^1$ Partially supported by NSERC Grant \#327637-06.}
	\thanks{$^2$ Partially supported by NSFC Grant \#11871094.}
	\thanks{$^3$ Partially supported by NSFC Grant \#11901553.}

	\begin{abstract}
		We establish the existence of the \ka-Ricci flow on projective varieties with log canonical singularities.  This generalizes some of the existence results of Song-Tian \cite{ST3} in case of projective varieties with klt singularities. We also prove that the normalized \ka-Ricci flow will converge to the \ka-Einstein metric with negative Ricci curvature on semi-log canonical models in the sense of currents. Finally we also construct \ka-Ricci flow solutions performing divisorial contractions and flips with log canonical singularities.
	\end{abstract}
	
	\maketitle
	
	\tableofcontents
	

	\section{Introduction}\label{section 1}
	
	In the decades of 1980 and 1990, Mori first proposed the minimal model program for high dimensional algebraic varieties, which has become an active field in algebraic geometry (cf.\cite{KMM,KM}). The main target of this program is to give a complete birational classification of algebraic varieties according to the birational classification of their minimal models. A variety $X$ is called minimal if its canonical line bundle $K_{X}$ is nef, i.e., it holds that $K_{X}\cdot C\geq 0$ for any algebraic curve $C.$  An important procedure in this program is to perform successive  birational surgeries, such as blow-downs and flips, to an algebraic variety until the so called minimal model is reached. Later, an essential breakthrough by Birkar-Cascini-Hacon-McKernan in \cite{BCHM} asserts the minimal model is indeed attained by this procedure for varieties with klt singularities.
	
	On the other hand, finding a canonical metric on a \ka\, manifold or a variety has long been a central problem in \ka\ geometry.
	Since Yau's solution to Calabi's conjecture \cite{Yau1} there have been a lot of developments in this direction. When the first Chern class $c_{1}(X)$ has definite sign the canonical metrics are \ka-Einstein metrics which have been studied systematically.  In particular, the \ka-Ricci flow can also be used to study the canonical metrics. In \cite{Cao} Cao gave a parabolic proof of existence of negative and zero \ka, metrics and showed that starting from some $\dr_0$ (belonging to $-c_1(X)$ in the negative case), the \ka-Ricci flow
	
	\begin{equation}\label{eq:krf}
		\left\{
		\begin{array}{ll}
			\displaystyle\dt\dr &=-Ric(\dr)\\
			\dr(0) &= \dr_0,
		\end{array}
		\right.
	\end{equation}
	has a longtime solution converging (after normalizing in the negative case) to the \ka-Einstein metric on $X$.
	
	However, in general the first Chern class will not be zero or have definite sign, and in these general situations the minimal model program provides ideas for finding so-called generalized \ka-Einstein metrics as canonical metrics on these varieties. In \cite{Ts} Tsuji used the \ka-Ricci flow to study the existence of the generalized \ka-Einstein metric on minimal projective manifolds of general type. In \cite{TZ} Tian-Zhang established the general existence result of the \ka-Ricci flow. In particular they gave a general existence time criterion of the \ka-Ricci flow on \ka\ manifolds and studied the long time behavior on projective manifolds. Furthermore, in a sequence of works \cite{ST1,ST2,ST3}, Song-Tian initiated the study of analytic minimal model program which proposes to find the canonical metrics on general projective varieties via the \ka-Ricci flow. They gave more precise descriptions of the long time behavior of the \ka-Ricci flow on projective manifolds and began the study of the \ka-Ricci flow on singular projective varieties.  There have been several related works on the \ka-Ricci flow in such singular settings, from a pluripotential theoretic point of view, and we refer to the recent works \cite{GLZ, D} and the references therein. 
	
	Let us briefly recall the minimal model program of Mori and discuss some corresponding works on the analytic aspect of this program.
	By the classical theory \cite{KMM,KM}, algebraic varieties can be classified by their Kodaira dimensions, where the Kodaira dimension of a n-dimenstional variety $X$ is defined by
	$$\kappa(X)=Kod(X):=\sup\{\kappa|\lim\inf_{l\to +\infty}\frac{\dim H^{0}(X,lK_{X})}{l^{\kappa}}>0\}.$$

	When $X$ is not minimal ($K_{X}$ is not nef.), by the cone theorem and base point free theorem there exists a contraction map $\varphi: X\to X'$ determined by the extremal ray which has negative intersection number with $K_{X}.$  If $\dim X'<n$, then $\varphi$ is of fiber type and $X$ is a uniruled Mori fiber space which implies $\kappa(X)=-\infty$ (\cite{KMM,KM}) and we can continue to consider the structure of the lower dimensional variety $X'.$ When $\dim X'=n,$ we can consider the exceptional set $Exc(\varphi)$ of $\varphi,$ which is the complement of the set in $X$ where $\varphi$ is an isomorphism. If $Exc(\varphi)$ has codimension 1, $\varphi$ is a divisorial contraction such that the Picard number $\rho(X')=\rho(X)-1.$ If $Exc(\varphi)$ has codimension 1 greater than 1, it was conjectured that there exists a flip morphism $X\to X^{+}$ with associated  $\varphi^{+}:X^{+}\to X'$ such that $-K_{X}$ is $\varphi$-ample while $K_{X^{+}}$ is $\varphi^{+}$-ample. The existence of the flip on normal varieties with klt singularities was confirmed in \cite{BCHM}. In these contexts, in \cite{ST3}, Song-Tian first defined the weak \ka-Ricci flow on $\mathbb{Q}$-factorial projective varieties with klt singularities and generalized Tian-Zhang's maximal time existence of the flow solution (\cite{TZ}) to singular settings. In particular, it was determined that when $X$ is not minimal ($K_{X}$ is not nef) then in finite time,  the flow will encounter an analytic singularity corresponding to algebraic surgeries characterized by $\varphi$, and that the \ka-Ricci flow could be extended through these singularities in the sense of currents.  Furthermore, if the surgeries at the singular time are divisorial contractions, Song-Weinkove  in \cite{SW1,SW2} proved the geometric convergence of the flow solution to the variety generated by the divisorial contractions. Song and Yuan also found several examples of metric flips corresponding to algebraic flips in \cite{So13,SY1}.

	For divisorial contractions we have $\rho(X')=\rho(X)-1$, so there can be at most finitely many divisorial contractions encountered by successive contractions of a non-minimal variety $X$ as above before a minimal variety is reached. In general however, it is still unknown whether the number of flips encountered is necessarily finite or not.  On certain varieties with dimension 3 or 4 it is known that only finitely many flips are encountered \cite{KMM}.  We will assume that only finitely many successive contractions of $X$ are required before reaching a minimal variety which we still denote by $X.$ We will also assume that the abundance conjecture is true, which asserts that if $K_{X}$ is nef then it is semi-ample, i.e., the canonical ring $\text{R}:=\bigoplus_{l\geq 0}H^{0}(X,lK_{X})$ is finitely generated. Thus there exists a natural morphism $\Phi_{|lK_{X}|}:X\to X_{can}=\text{Proj R}$ which is called the canonical model of $X.$  We then consider three separate cases in terms of the Kodaira dimension $\kappa(X)$ of $X$: $\kappa=0$; $0<\kappa<n$; $\kappa=n$.
	
	If $\kappa(X)=0$ then the abundance conjecture holds while $K_{X}$ is numerically trivial. In this case, if $X$ is smooth Cao  \cite{Cao}  proved the \ka-Ricci flow smoothly converges to the Calabi-Yau metric on $X$, while if $X$ has klt singularities Song-Yuan  \cite{SY2} proved that the \ka-Ricci flow converges to the singular Calabi-Yau metric in the current sense.
	
	If $\kappa(X)=n$ i.e., $K_{X}$ is big or $X$ is of general type, then the abundance conjecture also holds, and the morphism $\Phi_{|lK_{X}|}$ is a birational morphism from $X$ to $X_{can}.$ By \cite{TZ} if $X$ is nonsingular, the normalized \ka-Ricci flow with any initial \ka\ metric $\dr_{0}$
	\begin{equation}\label{eq:nor-krf}
		\left\{
		\begin{array}{ll}
			\displaystyle\dt\tilde{\dr} &=-Ric(\tilde{\dr})-\tilde{\dr} \\
			\tilde{\dr}(0) &= \tilde{\dr}_{0},
		\end{array}
		\right.
	\end{equation}
	has a unique solution on $[0,+\infty)$ and converges to the generalized \ka-Einstein metric on $X_{can}$ in current sense, where the generalized \ka-Einstein metric is a smooth \ka-Einstein metric on the regular part of $X_{can}.$ In case that $X$ is of general type, with klt singularities, but not minimal, in \cite{ST3} Song-Tian proved that given a suitable initial metric, one can continue the flow through its singularities, and that if only finitely many such are encountered, the weak flow \eqref{eq:nor-krf} will go through the surgeries and finally weakly converges to the generalized \ka-Einstein metric on $X_{can}.$

	If $0<\kappa(X)<n,$ assuming the abundance conjecture holds, the morphism $\Phi_{|lK_{X}|}$ will induce a fibration of $X$
	over a minimal $\kappa(X)$-dimensional variety $X_{can}:=\text{Proj R}$ where the canonical class $K_{X_{\eta}}$ of the generic fiber $X_{\eta}$ is numerically trivial. As a special case, in \cite{ST1} Song-Tian considered the case when $n=2,\kappa(X)=1,$ i.e., when $X$ is a nonsingular surface as the torus fibration over an elliptic curve which could be thought as $X_{can}.$ They proved that given any initial metric $\dr_{0}$, the normalized \ka-Ricci flow \eqref{eq:nor-krf} has a unique solution on $[0,+\infty)$ and converges to the generalized \ka-Einstein metric on the regular part of $X_{can}$ in $C^{1,1}$-sense, where the generalized \ka-Einstein metric $\dr_{GKE}$ on the regular part of $X_{can}$ satisfies
	$$Ric(\dr_{GKE})=-\dr_{GKE}+\dr_{WP}$$
	with a Weil-Petersson metric $\dr_{WP}$ which is generated by the deformation of the torus fibers. Furthermore, in \cite{ST2} they generalized their work to any n-dimensional nonsingular projective varieties with any $\kappa(X)\in (0,n)$ and one difference is that the convergence will only be in the current sense. If $\kappa(X)=1,$ recently in \cite{TZL} Tian-Z.L.Zhang proved that the convergence is in fact in the Gromov-Hausdorff sense.
	
	In summary, we observe that even starting from a nonsingular variety, singularities for the \ka\, Ricci flow may still develop in finite time at which point algebraic surgeries are encountered. To study the analytic minimal model program, we need to define the \ka-Ricci flow on singular varieties. As in \cite{BBEGZ,EGZ,ST3}, a reasonable background metric must first be defined on such singular varieties.  On a normal variety $X$, reference to a local metric can always be made, as in as in \cite{BBEGZ}, as any neighbourhood can always be considered as an analytic set in an ambient complex Euclidean space and a restriction of an ambient \ka\ metric can be considered.  On a $\mathbb{Q}$-factorial projective variety $X$ with a big and semi-ample divisor $H$, a global metric always exists as in \cite{ST3}, since we have a birational morphism $\Phi_{|mH|}: X\to\mathbb{CP}^{N_{m}}$ for sufficiently large integer $m$, which in turn induces a current $\dr_{0}:=\frac{1}{m}\Phi_{|mH|}^{*}\dr_{FS}\in [H]$ on $X$ where $\dr_{FS}$ is the canonical Fubini-Study metric on $\mathbb{CP}^{N_{m}}.$ As $H$ is big and semi-ample, by the properties of birational morphism, $\dr_{0}$ is a positive current, smooth and non-degenerate in a Zariski open subset of $X.$ So we can think of $\dr_{0}$ as a metric on the projective variety $X$, and we can define the class of $\dr_{0}$-PSH functions on $X:$
	$$PSH(X,\dr_{0}):=\{\varphi\in [-\infty,+\infty)|\dr_{0}+\ddb\varphi\geq 0\}.$$  To define the \ka-Ricci flow of $\dr_{0}+\ddb\varphi$ on the possibly singular variety $X$ for $\varphi\in PSH(X,\dr_{0})$, we must pull back and work on a smooth resolution of $X$ as follows.  By Hironaka's resolution (cf. \cite{KMM,KM}) we have a nonsingular projective variety $X'$ with a birational morphism $$\pi:X'\to X$$ where the canonical classes of $X$ and $X'$ are related by the adjunction formula:
	\begin{equation}\label{eq:adjunction 1}
		K_{X'}=\pi^{*}K_{X}+\sum_{j}a_{j}E_{j},
	\end{equation}
	where $E_{j}$ is an exceptional divisor and $a_{j}$ is the corresponding discrepancy.  We may then consider the \ka-Ricci flow  \eqref{eq:krf} on $X'$ of the possibly degenerate pullback metric $\pi^{*}(\dr_{0}+\ddb\varphi)$ on $X'.$  A solution $\omega(t)$ to this could be then considered as a solution to   \eqref{eq:krf} on $X$ provided $\omega(t)$ restricts to be zero along the fibers of $\pi$, and thus ``descends" down to $X$.
	
	The above study leads in general to degenerate elliptic or parabolic complex \MA\ equations on the non-singular variety $X'.$ In \cite{EGZ} Eyssidieux-Guedj-Zeriahi generalized Kolodziej's $L^{\infty}$-estimate \cite{Ko} for complex \MA\ equation to the degenerate case and established the existence of singular \ka-Einstein metrics with zero or negative Ricci curvature on $\mathbb{Q}$-factorial projective varieties with klt singularities. In \cite{ST3} Song-Tian also made use of this crucial estimate to establish the existence of the solutions to the \ka-Ricci flow on $\mathbb{Q}$-factorial projective varieties with klt singularities. A critical point of those works is that the klt singularities, where for any exceptional divisor $E_{j}$ it holds that $a_{j}>-1,$ only result in a $L^{p}$-integrable volume form in the degenerate \MA\ equation on $X'$ for some $p>1$ where the crucial $L^{\infty}$-estimate of the potential holds in \cite{Ko}. However we will see that this integrability fails in case of log canonical singularities, where $a_{j}\geq-1.$
	
	As indicated in \cite{KMM,KM}, in the minimal model program we are concerned mainly with singular varieties with at worst log canonical singularities, as classification according to discrepancies is invariant under different resolutions, i.e., the properties that all $a_{j}>-1$ and $a_{j}\geq-1$ are independent of the resolutions.  Following \cite{BCHM} on varieties with klt singularities, Birkar \cite{Bir}, Hacon-Xu \cite{HX} and Fujino \cite{Fu1,Fu2} established the minimal model theory for log canonical pairs. In particular they established the existence of birational surgeries including blow-downs and flips for log canonical pairs. In the analytic aspect, by \cite{BBEGZ}, $\mathbb{Q}$-Fano varieties admit at worst klt singularities so log canonical singularities cannot appear on $\mathbb{Q}$-Fano varieties. In \cite{BG}, Berman-Guenancia proved the existence of a Kahler Einstein metric with negative Ricci curvature on projective varieties with semi-log canonical singularities and ample canonical bundle. Here the semi-log canonical means that the twisted canonical class $K_{X}+D$ is $\mathbb{Q}$-Cartier ample, $X$ has only ordinary nodes with codimension 1, and any resolution of this pair satisfies the log canonical condition. As in the log canonical case, the $L^{\infty}$-estimate in \cite{EGZ} does not hold, and they used instead the variational method developed by \cite{BBGZ} to establish the existence of a weak solution. In \cite{So17}, Song also derived the existence of the \ka-Einstein metric on the semi-log canonical pairs by purely PDE methods where he also proved that the semi-log canonical model can be the limit in the moduli space of negative \ka-Einstein metrics.
	
	In this work, we will generalize the results on existence of solutions to the \ka-Ricci flow in \cite{ST3} to the case of $\mathbb{Q}$-factorial projective varieties with log canonical singularities. Our first result is in the following:
	\begin{theorem}\label{thm-main1}
		Let $X$ be a $\mathbb{Q}$-factorial projective variety with log canonical singularities and $H$ be a big and semi-ample $\mathbb{Q}$-Cartier divisor on $X.$
		Suppose $\Phi_{|mH|}$ defines a birational morphism $X\to\mathbb{CP}^{N_{m}}$ for some large integer $m$ and $\dr_{0}=:\frac{1}{m}\Phi_{|mH|}^{*}\dr_{FS}\in[H]$ is semi-positive current on $X.$ Then given the initial potential function $\varphi_{0}\in PSH(X,\dr_{0})\bigcap L^{\infty}_{loc}(X\setminus X_{lc})$ such that $\dr_{0}+\ddb\varphi_{0}$ is a current with zero Lelong number, there exists a unique maximal weak solution $\dr(t)$ to the \ka-Ricci flow  \eqref{eq:krf}:
		\begin{equation}\nonumber
			\left\{
			\begin{array}{ll}
				\displaystyle\dt\dr &=-Ric(\dr)\\
				\dr(0) &= \dr_0+\ddb\varphi_{0},
			\end{array}
			\right.
		\end{equation}
		on the time interval $X\times [0,T_{0}),$ where $X_{lc}:=\pi(\bigcup_{a_{i=-1}}E_{i})$ and $T_{0}:=\sup\{t>0|H+tK_{X}\;is\; nef\}.$ In particular, $\dr(t)$ is a current on $X$ with zero Lelong number for all $t\in [0,T_{0})$, solves \eqref{eq:krf} smoothly on $X_{reg}\times(0,T_{0})$ and converges to the initial current $\dr(0)$ in the current sense.
	\end{theorem}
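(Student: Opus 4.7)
The plan is to pull the flow back to a smooth model and turn it into a degenerate parabolic complex Monge-Amp\`ere equation, then approximate, prove estimates uniform in the regularization, and pass to a limit. Concretely, take a log resolution $\pi:X'\to X$ so that $K_{X'}=\pi^{*}K_{X}+\sum_{j} a_{j}E_{j}$ with $a_{j}\geq -1$; split the indices into $J_{klt}=\{a_{j}>-1\}$ and $J_{lc}=\{a_{j}=-1\}$, and pick Hermitian metrics $h_{j}$ on $\mathcal{O}(E_{j})$ with sections $s_{j}$. Choose a smooth family of reference forms $\hat{\dr}_{t}$ on $X'$ representing $\pi^{*}(H+tK_{X})+\e\, \theta$ for a fixed K\"ahler metric $\theta$ and write the flow as
\begin{equation*}
\dt\varphi_{\e}=\log\frac{(\hat{\dr}_{t}+\ddb{\varphi_{\e}})^{n}}{\Omega_{\e}},
\end{equation*}
where the singular model volume form $\Omega=\pi^{*}(\text{smooth vol. on }X)\cdot\prod_{j}|s_{j}|^{-2a_{j}}$ has only $L^{1}$ integrability along $\sum_{j\in J_{lc}}E_{j}$, and $\Omega_{\e}$ is a smooth regularization obtained by replacing $|s_{j}|^{2}$ with $|s_{j}|^{2}+\e$ on the $J_{lc}$ factors. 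For each $\e>0$ this is a smooth K\"ahler-Ricci flow with smooth long-time solution on $[0,T_{0})$ by Tian-Zhang.

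Next I would extract uniform estimates on compact subsets of $X'\setminus\bigcup_{j\in J_{lc}}E_{j}=\pi^{-1}(X\setminus X_{lc})$. An upper bound on $\varphi_{\e}$ and on $\dt\varphi_{\e}$ follows from standard maximum principle arguments comparing to a subsolution built from $\pi^{*}\varphi_{0}$ and from differentiating the equation in $t$. The Aubin-Yau $C^{2}$ estimate, in the form of a Chern-Lu inequality against $\omega_{X'}$, combined with Evans-Krylov and Schauder, promotes any interior $C^{0}$-bound to local $C^{\infty}$-bounds away from the exceptional set, exactly as in \cite{ST3}. The delicate step is the $C^{0}$-lower bound on $\varphi_{\e}$: since the volume form is only $L^{1}$, Kolodziej's estimate is no longer available, and this is where the log canonical case departs from the klt one. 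I would handle this via two ingredients: first, a barrier of the form $\varphi_{\e}\geq -C+\de\sum_{j\in J_{lc}}\log|s_{j}|^{2}_{h_{j}}$ obtained by maximum principle on the regularized equation (so the bound deteriorates towards $X_{lc}$ but is locally uniform on $X\setminus X_{lc}$); and second, a uniform energy/entropy bound coming from the variational viewpoint of Berman-Boucksom-Guedj-Zeriahi and Berman-Guenancia, adapted to the parabolic setting, which yields a global $L^{1}$ bound and hence zero Lelong numbers at each positive time.

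With these uniform estimates in hand, a standard diagonal/Arzel\`a-Ascoli extraction produces a limit $\varphi(t)\in PSH(X',\pi^{*}\dr_{t})$ satisfying the Monge-Amp\`ere flow smoothly on $\pi^{-1}(X_{reg})\times(0,T_{0})$. Since the estimates on $\pi^{*}\dr_{0}+\ddb{\varphi_{\e}}$ are trivial along the fibers of $\pi$ (the form is a pullback up to positive currents supported on exceptional divisors), the limit descends to $X$ and gives $\dr(t)=\dr_{0}+\ddb{\varphi(t)}$ solving \eqref{eq:krf} weakly. Uniqueness of the maximal solution follows from comparison, and the optimal existence time $T_{0}=\sup\{t: H+tK_{X}\text{ is nef}\}$ is the cohomological obstruction inherited from \cite{TZ}. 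Finally, the initial current convergence is established by combining the Demailly regularization of $\varphi_{0}$ with the parabolic maximum principle to sandwich $\varphi(t)$ between $\varphi_{0}\pm o(1)$ in $L^{1}(X,\dr_{0}^{n})$ as $t\to 0^{+}$, and the zero Lelong number property at $t>0$ follows from the above uniform local bounds.

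The main obstacle, as already indicated, is the $C^{0}$-lower bound on $\varphi_{\e}$. The loss of $L^{p>1}$-integrability of the reference volume form makes Kolodziej's $L^{\infty}$-estimate unavailable, and substituting a combination of pluripotential-theoretic energy bounds with explicit $\log|s|^{2}$-barriers (so that control only holds away from $X_{lc}$) is the essential new analytic input needed to extend the Song-Tian existence theorem to the log canonical setting.
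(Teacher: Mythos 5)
The overall scaffolding of your argument---pull back to a resolution, regularize the degenerate parabolic Monge--Amp\`ere equation, derive estimates uniform in the regularization, then pass to a limit and check descent---is the same strategy as the paper. However, the central analytic step is not actually carried out, and a key input is missing.

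First, the reference density is misdiagnosed. With $K_{X'}=\pi^{*}K_X+\sum_j a_j E_j$, the pullback of a volume form from $X$ has the shape $\pi^{*}\Omega_X\sim\prod_j|s_j|^{2a_j}\Omega'$. Along a log canonical divisor $a_j=-1$, this is a pole of order two, so the density is \emph{not} in $L^1$ at all, not ``only $L^1$.'' This is precisely what separates the lc case from the klt case, where the density is $L^p$ for some $p>1$ and Kolodziej's estimate applies; here one cannot even make classical sense of the Monge--Amp\`ere measure against a fixed K\"ahler reference form, and the variational/entropy machinery of \cite{BBGZ, BG} that you invoke presupposes finite energy of the measure, which fails.

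Second, because of this non-integrability, the barrier $\varphi_{\e}\geq -C+\de\sum_{j\in J_{lc}}\log|s_j|^2$ does not follow from a maximum principle applied to your regularized equation. If the background form $\hat{\dr}_{t,\de}$ is K\"ahler with a smooth volume form, then at the moving infimum point of $\varphi_{\e}-\de\log|\tilde{S}_{\tilde{E}}|^{2}$ the relevant lower bound for $\dt$ of the infimum is $\log(\hat{\dr}_{t,\de}^n/\Omega_{\e})$, which behaves like $\sum_{i}\log(|s_i|^2+\e)$ near the lc divisors; this is not bounded below uniformly in $\e$ as the infimum point drifts toward the divisors, so the argument does not close. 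The paper's resolution of exactly this issue is the essential new ingredient you are missing: it replaces the smooth background by a Carlson--Griffiths (cusp) form $\hat{\dr}$ near the lc divisors $D_i$ (and a Guenancia--P\u{a}un approximate conical form near the log terminal divisors $E_j$), whose volume behaves like $\Omega'/(|S_i|^2\log^2|S_i|^2)$. This cancels the $|S_i|^{-2}$ pole of the density, giving the key bound \eqref{eq:vol} and making $\dt\inf(\varphi'-\de\log|\tilde{S}_{\tilde{E}}|^{2})\geq -C+n\log(v+t)$, hence Theorem \ref{thm-lb1}. As a downstream consequence, the approximate flow lives on the open manifold $X'\setminus\bigcup_i D_i$ with a complete bounded-geometry metric, so short/long-time existence comes from \cite{LZ} rather than \cite{TZ}, the $C^2$ estimate requires the bisectional curvature lower bound of Guenancia--P\u{a}un for these forms (Lemma \ref{lem-bisec lb}) rather than a bare Chern--Lu inequality against a fixed smooth $\theta$, and the approximation scheme needs the conical regularization $\mathcal{F}(|S_j|^2,1-b_j,\e_j^2)$ rather than the naive $|s_j|^2+\e$. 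Without the Carlson--Griffiths background the proposal does not yield the uniform lower bound, which is the heart of the theorem.
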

	\begin{remark}
		We refer to Definition \ref{logcanonical} for the definition of the $log$ $canonical$ $locus$ $X_{lc}$ and to Definition \ref{weaksolkrf} for the precise definition of weak and maximal solutions to \eqref{eq:krf} in the context of Theorem \ref{thm-main1} above.
	\end{remark}
	
	Let us briefly describe the strategy of the proof. As in \cite{BBEGZ,ST3} and described above we pull back and study the \ka-Ricci flow on the resolution $X'$ of $X$ and derive a degenerate complex \MA\ flow equation for a family of singular potentials $\varphi(t)$ on $X'.$ As indicated in \cite{EGZ}, we cannot restrict the initial potential $\varphi(0)$  to be in $L^{\infty}(X')$, and more generally must consider currents of zero Lelong number.  The main difficulty here arises from the exceptional divisor on $X'$ with discrepancy $-1,$ precluding a treated as in \cite{EGZ,ST3}.  We overcome this difficulty as in  \cite{CLS, De,GZ} by first regularizing $\varphi(0)$ by a family of smooth bounded potentials on $X'$, and also by regularizing the degenerate \MA flow equation on $X'$ while introducing a new family of background forms which combine Carlson-Griffiths forms \cite{CG} and the regularized conical forms by Guenancia-P\u{a}un \cite{GP} on $X'$.  In particular, these background forms will provide a family smooth complete bounded curvature \ka\, metrics in the open compliment of some divisor on $X'$.  We will then derive a uniform upper bound for solutions of the regularized equation as well as local lower bounds in the regular region after which we establish successive local high order estimates.  For the unique maximality of solutions, we will adapt the arguments in \cite{GZ} to prove the continuity of the solution at time zero in $L^{1}$-sense which will imply the maximality of the weak solution to the weak \ka-Ricci flow \eqref{eq:krf}.
	
	By Theorem \ref{thm-main1}, when $X$ is a semi-log canonical model $X$ so that $K_{X}$ is ample $\mathbb{Q}$-Cartier, the solution $\dr(t)$ to the \ka-Ricci flow  \eqref{eq:krf} exists for $t\in [0,+\infty).$  In fact, the normalized \ka-Ricci flow \eqref{eq:nor-krf} will also have a longtime solution in this case as well and natural problem is to study the limit behaviour of the normalized flow. From \cite{Chau,LZ}, in the complete smooth case the normalized \ka-Ricci flow converges to the complete \ka-Einstein metric with negative Ricci curvature. In the next theorem we show that similar limit behaviour is also true for such semi-log canonical models:
	\begin{theorem}\label{thm-main2}
		Suppose that in Theorem  \ref{thm-main1} we have that $X$ is a semi-log canonical model.  Then, the normalized \ka-Ricci flow
		\eqref{eq:nor-krf} has a unique maximal weak solution on $[0,+\infty)$ with initial condition $\dr(0)=\dr_{0}+\ddb\varphi_{0}$.   Moreover as $t$ tends to infinity $\dr(t)$ converges to the \ka-Einstein current $\dr_{KE}$ in
		both current and $C^{\infty}_{loc}(X_{reg})$-senses.  Moreover $\dr_{KE}$ is a smooth \ka-Einstein metric in $X_{reg},$ and a current with zero Lelong number and bounded local potential away from $X_{lc}.$
	\end{theorem}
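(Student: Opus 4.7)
The plan is to (a) deduce longtime existence for the normalised flow from Theorem \ref{thm-main1} by rescaling, (b) control the evolving potential by comparing with the \ka--Einstein current $\dr_{KE}$ of Berman--Guenancia \cite{BG} and Song \cite{So17}, and (c) upgrade the convergence to $C^{\infty}_{loc}(X_{reg})$ via interior parabolic estimates. Since $X$ is semi-log canonical, $K_{X}$ is ample $\mathbb{Q}$-Cartier, so $H+tK_{X}$ is ample for every $t\geq 0$ and Theorem \ref{thm-main1} provides a weak solution $\dr(t)$ of \eqref{eq:krf} on $[0,+\infty)$. The change of variables $\tilde{\dr}(s)=e^{-s}\dr(e^{s}-1)$ yields a weak solution of \eqref{eq:nor-krf} on $[0,+\infty)$, whose uniqueness and maximality are inherited from Theorem \ref{thm-main1}. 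On the resolution $\pi:X'\to X$, fix a smooth $\chi\in c_{1}(K_{X})$ and set $\tilde{\dr}_{s}=e^{-s}\pi^{*}\dr_{0}+(1-e^{-s})\pi^{*}\chi$; writing $\tilde{\dr}(s)=\tilde{\dr}_{s}+\ddb\tilde{\varphi}(s)$, the potential satisfies
\begin{equation*}
\frac{\partial\tilde{\varphi}}{\partial s}=\log\frac{(\tilde{\dr}_{s}+\ddb\tilde{\varphi})^{n}}{\Omega}-\tilde{\varphi},
\end{equation*}
for a pullback volume form $\Omega$ chosen so that $-Ric(\Omega)=\chi$, where $\Omega$ lies in $L^{1}(X')$ but only barely so at exceptional divisors with discrepancy $a_{j}=-1$.

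The upper bound $\tilde{\varphi}(s)\leq C$ is immediate from the maximum principle applied to $e^{s}\tilde{\varphi}$. The main difficulty is the lower bound, because the candidate limit $\varphi_{KE}$ is only bounded on compacta of $X\setminus X_{lc}$. I would run the regularisation from the proof of Theorem \ref{thm-main1}: solve the smooth nondegenerate normalised flows $\tilde{\varphi}_{\e}$ on $X'$ associated to the Carlson--Griffiths plus Guenancia--P\u{a}un conical background forms, and compare with the bounded \ka--Einstein approximants $\varphi_{KE,\e}$ built on the same backgrounds by the variational arguments of \cite{BG,So17}. The difference $\tilde{\varphi}_{\e}-\varphi_{KE,\e}$ satisfies a linear parabolic equation, and a Cao--Tsuji type argument applied to the $\dot{\tilde{\varphi}}_{\e}$-derivative yields
\begin{equation*}
\bigl|\tilde{\varphi}_{\e}(s)-\varphi_{KE,\e}\bigr|\les (1+s)e^{-s}
\end{equation*}
uniformly in $\e$. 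Passing $\e\to 0$ along the subsequences fixed in Theorem \ref{thm-main1} and in \cite{BG,So17}, both sides converge on compacta of $X_{reg}$ and in $L^{1}(X')$ globally, delivering uniform convergence $\tilde{\varphi}(s)\to\varphi_{KE}$ on compacta of $X_{reg}$ and, in particular, the required lower bound away from $X_{lc}$.

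With $C^{0}$-control on $X_{reg}$ in hand, the standard parabolic chain (Aubin--Yau second order, Evans--Krylov, Shi-type bootstrap) applied locally on $X_{reg}$, as in \cite{ST3,SW1,SY2}, upgrades the convergence to $C^{\infty}_{loc}(X_{reg})$. Global convergence in the current sense then follows from the uniform upper bound together with $L^{1}$-compactness in $PSH(X',\pi^{*}\chi)$: any weak limit is a $\pi^{*}\chi$-psh current agreeing with $\pi^{*}\varphi_{KE}$ on $X_{reg}$, and uniqueness of the negative \ka--Einstein current on semi-log canonical models forces the full family to converge to $\dr_{KE}$. The regularity claims for $\dr_{KE}$ (smooth on $X_{reg}$, zero Lelong numbers, locally bounded potential off $X_{lc}$) are inherited either from \cite{BG,So17} directly, or from the uniform estimates above.

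The single hardest step is the coupling of the two regularisations used in the lower bound: the flow approximants from Theorem \ref{thm-main1} and the static \ka--Einstein approximants from \cite{BG,So17} must be constructed from a common Carlson--Griffiths plus Guenancia--P\u{a}un background so that a single parabolic maximum principle produces the uniform-in-$\e$ comparison. Once this compatibility is arranged, the exponential rate $(1+s)e^{-s}$ comes from the familiar $\dot{\tilde{\varphi}}$-derivative computation of \cite{Cao,Ts}, and everything else is standard.
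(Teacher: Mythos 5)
Your proposal is correct in outline but takes a genuinely different route from the paper. The paper's proof of the long-time convergence does \emph{not} compare the flow with the pre-existing K\"ahler--Einstein approximants of Berman--Guenancia and Song; instead, it runs a purely parabolic argument on the normalized approximate equations. After establishing a uniform-in-time upper bound $\tilde{\varphi}'_{l,u,v,\tilde{\e}}\leq C_0$ (Lemma~\ref{lem:nor upper}), a uniform lower bound $\tilde{\varphi}'_{l,u,v,\tilde{\e}}\geq\de\log|\tilde{S}_{\tilde{E}}|^2-C_{\de}$ (Lemma~\ref{lem:nor lower}), and uniform local high-order estimates (Lemma~\ref{lem:nor high order}), the key step is the exponentially decaying upper bound $\dot{\tilde{\varphi}}'_{l,u,v,\tilde{\e}}\leq Cte^{-t}$ (Lemma~\ref{lem-nor-t-dev}), obtained by applying the maximum principle to $(e^t-1)\dot{\tilde{\varphi}}'-\tilde{\varphi}'-C't$ as in Tian--Zhang. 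This makes $\tilde{\varphi}'$ (after subtracting a convergent integral of $se^{-s}$) essentially monotone and bounded below, forcing pointwise convergence; one then reads off that the limit solves the K\"ahler--Einstein Monge--Amp\`ere equation. In particular the existence of $\dr_{KE}$ is obtained as a \emph{byproduct} of the flow --- the paper explicitly presents Theorem~\ref{thm-main2} as a parabolic proof of the results of \cite{BG,So17}, not an application of them. The bounded-local-potential claim off $X_{lc}$ is then proved separately in \S5.3 by a perturbation argument parallel to Theorem~\ref{thm-lb2}, again without invoking \cite{BG,So17}. Your proposal would instead construct compatible approximants $\varphi_{KE,\e}$ to the static K\"ahler--Einstein problem on the same Carlson--Griffiths $+$ Guenancia--P\u{a}un backgrounds and run a coupled maximum-principle comparison --- you correctly identify this compatibility of the two regularizations as the crux, and it is exactly this difficulty the paper sidesteps. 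Your route, if carried through, would prove convergence but would presuppose rather than reprove the existence of the K\"ahler--Einstein current, while the paper's argument is self-contained and, once the $\dot{\tilde{\varphi}}'$ bound is in hand, considerably simpler; on the other hand, your comparison argument would directly yield an exponential convergence rate, which the paper does not explicitly state.
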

	
	\begin{remark}
		We refer to \S 5  for the definition of semi-log canonical models.
	\end{remark}
	Theorem \ref{thm-main2} can be viewed as a parabolic version of the proof of existence of the \ka-Einstein current on semi-log canonical varieties in  \cite{BG,So17}.
	The next important problem is the behaviour of the \ka-Ricci flow when $K_{X}$ is not nef. In this case the flow will arrive at a singularity at the finite time $T_0$ in Theorem \ref{thm-main1}.   As indicated by \cite{Bir,Fu1,Fu2,HX}, birational surgeries will occur at time $T_0$.  In the context of the analytic minimal model program, as Song-Tian did for klt case \cite{ST3}, we have the following result asserting that the \ka-Ricci flow extends through the bi-rational surgeries in the log canonical case as in the following
	\begin{theorem}\label{thm-main3}
		Suppose that in Theorem  \ref{thm-main1}, we have that $T_{0}<\infty$
		and $H_{T_{0}}=H+T_{0}K_{X}$ is $\mathbb{Q}$-semi-ample, such that for some large integer $m$ the linear system $|mH_{T_{0}}|$ induces a morphism $\pi:X\to Y.$ Then:
		\begin{enumerate}
			\item If $\pi:X\to Y$ is a divisorial contraction, then there exists a closed semi-positive (1, 1)-current $\dr_{Y}$ with zero Lelong number on $Y$ such that
			\begin{enumerate}
				\item The weak \ka-Ricci flow can be uniquely continued on $Y$ starting with $\dr_{Y}$ at $t=T_{0}.$
				\item $\dr(t,\cdot)$ converges to $\pi^{*}\dr_{Y}$ in $C^{\infty}(X_{reg}\setminus Exc(\pi))$ as $t\to T_{0}^{-}.$
				\item Still denote the \ka-Ricci flow starting on $Y$ at $t=T_{0}$ with the initial metric $\dr_{Y}$ by $\dr(t,\cdot),$ then $\dr(t,\cdot)$ converges to $\dr_{Y}$ in $C^{\infty}(Y_{reg}\setminus\pi(Exc(\pi)))$ as $t\to T_{0}^{+}.$
			\end{enumerate}
			\item If $\pi:X\to Y$ is a small contraction, i.e., $Exc(\pi))$ has codimension greater than 1, and there exists a flip
			\begin{equation}\label{eq:flip}
				\xymatrix{
					X \ar@{.>}[rr]^{\bar{\pi}^{-1}} \ar[dr]_{\pi}
					&  &    X^{+} \ar[dl]^{\pi^{+}}    \\
					& Y                 }
			\end{equation}
			with the property that $X^{+}_{lc}\bigcap Exc(\pi^{+})=\varnothing,$
			then there exists a closed semi-positive (1, 1)-current $\dr_{Y}$ with zero Lelong number on $Y$ such that
			\begin{enumerate}
				\item $\dr(t,\cdot)$ converges to $\pi^{*}\dr_{Y}$ in $C^{\infty}(X_{reg}\setminus Exc(\pi))$ as $t\to T_{0}^{-}.$
				\item The weak \ka-Ricci flow can be uniquely continued on $X^{+}$ starting with $\pi^{+*}\dr_{Y}$ at $t=T_{0}.$ Denote the solution still by $\dr(t,\cdot),$ then $\dr(t,\cdot)$ converges to $\pi^{+*}\dr_{Y}$ in $C^{\infty}(X^{+}_{reg}\setminus Exc(\pi^{+}))$ as $t\to T_{0}^{+}.$
			\end{enumerate}
		\end{enumerate}
		In summary, the weak \ka-Ricci flow can be uniquely extended through the divisorial contractions and flips on $\mathbb{Q}$-factorial projective varieties with log canonical singularities.
	\end{theorem}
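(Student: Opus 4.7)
\textbf{Plan of proof of Theorem \ref{thm-main3}.} I would work on a common log resolution $\mu: Z\to X$ (in the flip case, chosen to also dominate $X^{+}$ and $Y$). Using the $\mathbb{Q}$-semi-ampleness of $H_{T_{0}}$, I would construct a smooth reference path of closed $(1,1)$-forms $\chi_{t}$, $t\in[0,T_{0}]$, on $Z$, such that $[\chi_{t}]=\mu^{*}[H+tK_{X}]$ and such that $\chi_{T_{0}}=\mu^{*}\pi^{*}\dr_{L}$ where $\dr_{L}$ is a smooth positive representative of the ample $\mathbb{Q}$-Cartier class $L$ on $Y$ descended from $H_{T_{0}}$. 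As in the proof of Theorem \ref{thm-main1}, the \ka--Ricci flow \eqref{eq:krf} on $X$ then reduces to a degenerate parabolic complex \MA\ equation on $Z$ for a family of $\chi_{t}$-PSH potentials $\varphi(t)$. The strategy is to prove: (i) convergence of $\varphi(t)$ as $t\to T_{0}^{-}$ to a current $\dr_{Y}$ that descends to $Y$; (ii) restart the flow at $t=T_{0}$ using Theorem \ref{thm-main1} with initial datum $\dr_{Y}$ on $Y$ (divisorial case) or $\pi^{+*}\dr_{Y}$ on $X^{+}$ (flip case); (iii) smooth convergence on the appropriate regular locus.

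For step (i), I would first adapt the estimates in the proof of Theorem \ref{thm-main1}, derived using Carlson--Griffiths and regularized Guenancia--P\u{a}un background forms, to obtain uniform $C^{k}_{loc}$ bounds for $\varphi(t)$ on compact subsets of $\mu^{-1}(X_{reg}\setminus \mathrm{Exc}(\pi))$ up to and including $t=T_{0}$. A uniform upper bound on $\varphi(t)$ follows from maximum principle comparisons (again as in the proof of Theorem \ref{thm-main1}), while monotonicity of $\dot{\varphi}$ (a consequence of differentiating the flow equation in $t$ and applying the maximum principle to a suitable linearized operator) gives a lower bound of the form $\varphi(t)\geq \varphi(T_{0})-C(T_{0}-t)$ away from the exceptional set, hence a well defined limit $\varphi_{T_{0}}$. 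The resulting current $\dr(T_{0}):=\chi_{T_{0}}+\ddb\varphi_{T_{0}}$ has zero Lelong number because the uniform upper bound on $\varphi(t)$ passes to the limit and Lelong numbers are upper semicontinuous on decreasing sequences of PSH functions.

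For step (ii), since $[\chi_{T_{0}}]=\mu^{*}\pi^{*}[L]$ and the restriction of $\dr(T_{0})$ to each fibre of $\pi\circ\mu$ has zero mass (those fibres being contracted by the morphism defined by $|mH_{T_{0}}|$), the standard descent / projection principle produces a semi-positive closed $(1,1)$-current $\dr_{Y}$ on $Y$ with $(\pi\circ\mu)^{*}\dr_{Y}=\dr(T_{0})$ and zero Lelong numbers, and one checks that the local potentials of $\dr_{Y}$ are bounded away from $Y_{lc}$ in the divisorial case (respectively that $\pi^{+*}\dr_{Y}$ is bounded away from $X^{+}_{lc}$ in the flip case, precisely because the assumption $X^{+}_{lc}\cap \mathrm{Exc}(\pi^{+})=\varnothing$ makes the pullback by $\pi^{+}$ smooth on a neighbourhood of $X^{+}_{lc}$). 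Then $\dr_{Y}$, or $\pi^{+*}\dr_{Y}$ on $X^{+}$, satisfies the hypotheses of Theorem \ref{thm-main1}, and the flow can be restarted, giving a weak \ka-Ricci flow on $[T_{0},T_{0}+\varepsilon)$. Uniqueness of the continuation on $Y$ (resp.\ $X^{+}$) follows from the maximality clause in Theorem \ref{thm-main1}.

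For step (iii), the $C^{\infty}_{loc}$ estimates from step (i) on $\mu^{-1}(X_{reg}\setminus\mathrm{Exc}(\pi))$ descend to $C^{\infty}(X_{reg}\setminus \mathrm{Exc}(\pi))$ via $\mu$, proving $\dr(t)\to\pi^{*}\dr_{Y}$ smoothly away from $\mathrm{Exc}(\pi)$ as $t\to T_{0}^{-}$; the analogous convergence as $t\to T_{0}^{+}$ follows by repeating the short-time smoothing estimates for \eqref{eq:krf} obtained inside the proof of Theorem \ref{thm-main1}, now applied on $Y_{reg}\setminus\pi(\mathrm{Exc}(\pi))$ (resp.\ $X^{+}_{reg}\setminus\mathrm{Exc}(\pi^{+})$). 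The principal obstacle I expect is step (i)--(ii) in the flip case: the potential may blow up along $X_{lc}$, so the uniform estimates used to extract $\dr_{Y}$ must be local in character and robust enough to survive both the degeneration of $\chi_{t}$ along $\mathrm{Exc}(\pi)$ and the presence of lc divisors with discrepancy $-1$ on the resolution, which is exactly the difficulty that the combined Carlson--Griffiths and Guenancia--P\u{a}un background forms from Theorem \ref{thm-main1} are designed to handle. The hypothesis $X^{+}_{lc}\cap\mathrm{Exc}(\pi^{+})=\varnothing$ is what finally allows the descended initial datum on $X^{+}$ to satisfy the bounded-away-from-$X^{+}_{lc}$ requirement of Theorem \ref{thm-main1}.
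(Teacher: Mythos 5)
Your proposal is correct and follows essentially the same approach as the paper: pass to a common resolution, observe that because $H+T_{0}K_{X}$ is big and semi-ample Kodaira's lemma and the a priori estimates from the proof of Theorem \ref{thm-main1} extend to the closed interval $[0,T_{0}]$ away from the exceptional and log canonical loci, extract the limit current, show it descends to $Y$ by the fiberwise vanishing of $[\dr_{T_{0}}]$, and restart the flow via Theorem \ref{thm-main1} with the hypothesis $X^{+}_{lc}\cap\mathrm{Exc}(\pi^{+})=\varnothing$ guaranteeing the zero-Lelong-number and $L^{\infty}_{loc}$ away-from-$X^{+}_{lc}$ conditions in the flip case. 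The paper packages your step (i) as Lemma \ref{lem-sing time1}, step (ii) as Corollary \ref{cor-sing time}, and steps (ii)--(iii) as Theorem \ref{thm-surgery}, but the substance is the same.
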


	Compared to \cite{ST3}, our main difficulty is the case when the exceptional locus has nonempty intersection with the log canonical locus $X_{lc},$ where the local potential is $-\infty.$ Actually we will show that in that case the local potential on the exceptional locus will also attain $-\infty$ with zero Lelong number. After the birational surgeries the \ka-Ricci flow will be continued with the new log canonical locus where the initial potential is $-\infty$ with zero Lelong number.
	
	The paper is organized as follows. In section 2 we establish some necessary preliminaries and definitions needed throughout the paper, and in particular, to consider the \ka\, Ricci flow of a metric $\dr_{0}+\ddb\varphi_{0}$ on a log canonical variety $X$ as in the above Theorems.  In section 3 we formulate the corresponding main degenerate  \MA flow equation to be solved on the smooth resolution $X'$ of $X$.  We then formulate a regularization of this degenerate  flow equation and derive corresponding a priori estimates.   Then in secitons 4, 5, 6  we prove Theorems \ref{thm-main1}, \ref{thm-main2} and \ref{thm-main3} respectively.
	
	\noindent{\bf Acknowledgment.} The authors want to thank Professor Gang Tian for his interest in this work and lots of encouragement. They also want to thank Professor Jian Song for his careful reading the draft and beneficial advice. The last author wants to thank Professor Chenyang Xu, Chi Li and Yuchen Liu for discussions in algebraic geometry. He also wants to thank Professor Yuan Yuan for careful discussions on the proof details. The second and last authors want to thank BICMR for its hospitality where partial work was done during the summer of 2018.  Finally, the authors would like to thank the referee for their time and effort and for numerous valuable observations and suggestions on our original manuscript.
	
	\section{Preliminaries}\label{section 2}
	
	\subsection{Divisors and singularities in the minimal model program}
	
	Let us collect some necessary background materials we need in this paper, which mainly come from \cite{KMM,KM,ST3}.
	First we recall the basic
	
	\begin{definition}\label{def-divisor}
		Given a $\mathbb{Q}$-Cartier divisor $D$ on a projective variety $X$
		\begin{enumerate}
			\item we say $D$ is ample if there exists a positive integer $m$ such that the linear system $|mD|$ induces an embedding of $X$ into $\mathbb{CP}^{N_{m}};$
			\item we say $D$ is semi-ample if there exists a positive integer $m$ such that the linear system $|mD|$ induces a morphism of $X$ into $\mathbb{CP}^{N_{m}};$
			\item we say $D$ is effective if $D=\sum_{i=1}^{k}n_{i}D_{i}$ where the integers $n_{i}\geq 0$ and $D_{i}$ are prime divisors;
			\item we say $D$ is nef if $D\cdot C\geq 0$ for any curve $C$ on $X.$
			\item we say $D$ is big if $\dim\;H^{0}(X,mD)\sim m^{n}$ for positive integer $m\to+\infty.$
		\end{enumerate}
	\end{definition}
	
	We will restrict to considering $\mathbb{Q}$-$factorial$ projective varieties which are defined as
	
	\begin{definition}\label{Qfactorial}
		An $n$-dimensional projective variety $X$ is called $\mathbb{Q}$-$factorial$ if
		$\mathbb{Q}$-divisors are in fact $\mathbb{Q}$-Cartier divisors and $X$ is normal (ie, $\dim(X_{\rm sing})\leq n-2$).
	\end{definition}
	
	Let $X$ be a $\mathbb{Q}$-factorial projective variety.  Then Hironaka's resolution theorem (cf.\cite{KMM,KM}) provides a smooth projective variety $X'$ and a birational morphism $$\pi:X'\to X$$ with the adjunction formula \eqref{eq:adjunction 1}:
	\begin{equation}\label{eq:adjunction 2}
		K_{X'}=\pi^{*}K_{X}+\sum_{j}a_{j}E_{j},
	\end{equation}
	where $E_{j}'s$ are exceptional divisors belonging to the exceptional locus $Exc(\pi)$ with simple normal crossings and $a_{j}'s$ are the unique collection of rational corresponding discrepancies.  In particular,  $\pi(\bigcup_j E_j )=X_{\rm sing}$ and $\pi: X'\setminus (\bigcup_j E_j) \to X_{\rm reg}$ is a complex holomorphic map.
	
	\begin{definition}\label{def-divisor-class}
		Let $X$ be a $\mathbb{Q}$-factorial projective variety with a resolution  $\pi:X'\to X$ as above
		
		\begin{enumerate}
			
			\item [(a)] We say that $X$ has
			\begin{enumerate}
				\item [(i)] \begin{it} log canonical singularities\end{it} if $a_{j}\geq -1$ for all $j$
				
				\item [(ii)]  \begin{it} log terminal singularities\end{it} if $a_{j}> -1$ for all $j$
				
				\item  [(iii)] \begin{it} canonical singularities\end{it} if $a_{j}\geq 0$ for all $j$
				
				\item  [(iv)] \begin{it} terminal singularities\end{it} if $a_{j}>0$ for all $j$
			\end{enumerate}
			
			\item[(b)]
			For any divisor $E_j$ in \eqref{eq:adjunction 2} we say
			\begin{enumerate}
				\item  [(i)] $E_j$ is a \begin{it}log canonical (lc) divisor\end{it} if $a_{j}=-1$,
				\item  [(ii)] $E_j$ is a \begin{it}log terminal (lt) divisor\end{it} if $-1<a_{j}<0$,
				\item  [(iii)]  $E_j$ is a \begin{it}canonical divisor\end{it}  if $a_{j}\geq 0$.
			\end{enumerate}
		\end{enumerate}
	\end{definition}

	According to \cite{KMM}, given two different resolutions of $X$ above with \begin{it}log canonical singularities\end{it}
	the classification above is independent of the choices of resolutions in the sense that the log canonical divisors are in strictly one-to-one correspondence for two different resolutions, and likewise for log terminal and canonical divisors.   In particular, we may make the following
	
	\begin{definition}\label{logcanonical}
		Let $X$ be a $\mathbb{Q}$-factorial projective variety.  We say $X$ has \begin{it}log canonical singularities\end{it} if $a_{j}\geq -1$ for all $j$ in \eqref{eq:adjunction 2} and we define the \begin{it} log canonical locus\end{it} as $X_{lc}:=\pi(\bigcup_{a_{i=-1}}E_{i})$.
	\end{definition}


	

	As  in \cite{KMM,ST3}, we have the following special case of Kodaira's Lemma, which plays a crucial role as in \cite{ST3,TZ,Ts}:
	\begin{lemma}\label{lem-kod}
		Given a $\mathbb{Q}$-factorial projective variety $X$ with a semi-ample and big $\mathbb{Q}$-divisor $H,$ for any resolution $\pi:X'\to X,$ there exists an effective divisor $E$ whose support is contained in $Exc(\pi)$ on $X'$, and $d>0$ such that $\pi^{*}H-\de E$ is ample for any rational number $0<\de<d.$
	\end{lemma}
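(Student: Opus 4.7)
My plan is to reduce the problem to the birational morphism to an ambient projective variety defined by $|mH|$, and then combine a relative Kodaira lemma with Kleiman's criterion on $X'$.

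Since $H$ is big and semi-ample on $X$, for sufficiently large $m$ the linear system $|mH|$ is base-point free and defines a birational morphism $\Phi := \Phi_{|mH|} : X \to Y \subseteq \mathbb{CP}^{N_m}$ onto its image. The composite $g := \Phi \circ \pi : X' \to Y$ is a projective birational morphism from the smooth variety $X'$ to the projective variety $Y$ (which we may replace by its normalization), and $g^{*}\mathcal{O}_Y(1) = m \pi^{*} H$. First, I would invoke the standard relative Kodaira (or negativity) lemma: for a projective birational morphism from a smooth variety to a normal base there exists an effective $\mathbb{Q}$-divisor $E$ on $X'$, supported in $\mathrm{Exc}(g)$, such that $-E$ is $g$-ample. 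Such $E$ is constructed from a $g$-very ample Cartier divisor by an exceptional-decomposition argument.

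With $E$ in hand, I would show that $m\pi^{*} H - \delta E = g^{*}\mathcal{O}_Y(1) - \delta E$ is ample on $X'$ for every sufficiently small rational $\delta>0$ via Kleiman's criterion. For any irreducible curve $C \subset X'$ contracted by $g$, one has $g^{*}\mathcal{O}_Y(1)\cdot C = 0$ while $-E\cdot C>0$ by the $g$-ampleness of $-E$, so positivity is immediate. For any $C$ with $g(C)$ a curve, $g^{*}\mathcal{O}_Y(1)\cdot C \geq 1$ by very ampleness, and the cone of numerical classes of non-$g$-contracted curves of bounded $g^{*}\mathcal{O}_Y(1)$-degree is compact, giving a uniform bound $E\cdot C \leq M\, g^{*}\mathcal{O}_Y(1)\cdot C$. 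Taking $\delta<1/M$ yields strict positivity, and $d:=1/(mM)$ then delivers ampleness of $\pi^{*}H - \delta' E$ for $0<\delta'<d$.

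The remaining point is ensuring $\mathrm{supp}(E)\subseteq \mathrm{Exc}(\pi)$. A priori $\mathrm{supp}(E)\subseteq\mathrm{Exc}(g)$, which may strictly contain $\mathrm{Exc}(\pi)$ by including strict transforms on $X'$ of divisorial components contracted by $\Phi$. When $\pi$ is a Hironaka-style log resolution chosen to dominate $\Phi_{|mH|}$ (as in the paper's construction) one has $\mathrm{Exc}(g)=\mathrm{Exc}(\pi)$ automatically and the conclusion follows. For an arbitrary resolution $\pi$ I would first dominate it by a log resolution $\sigma:\tilde X\to X'$ such that $\pi\circ\sigma$ dominates $\Phi_{|mH|}$, produce on $\tilde X$ a divisor $\tilde E$ with $\mathrm{supp}(\tilde E)\subseteq \mathrm{Exc}(\pi\circ\sigma)$ satisfying $(\pi\circ\sigma)^{*}H - \delta\tilde E$ ample, and descend to $X'$ by $E:=\sigma_{*}\tilde E$. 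The push-forward annihilates the $\sigma$-exceptional components of $\tilde E$; the remaining non-$\sigma$-exceptional components map to divisors on $X'$ that lie in $\mathrm{Exc}(\pi)$ by the dimension count $\dim\pi(\sigma(F))=\dim(\pi\circ\sigma)(F)<\dim F=\dim\sigma(F)$. Ampleness of $\pi^{*}H-\delta E$ on $X'$ is then checked curve-by-curve, relating intersections on $X'$ to those on $\tilde X$ via strict transforms and the projection formula.

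The main obstacle is precisely this last step: without compatibility between $\pi$ and $\Phi_{|mH|}$, the naive construction produces $E$ in $\mathrm{Exc}(g)$ rather than $\mathrm{Exc}(\pi)$, and checking ampleness of the descended divisor on $X'$ requires the delicate curve-by-curve Kleiman argument above to extract the stated conclusion for an arbitrary resolution $\pi$ from the well-behaved log-resolution case.
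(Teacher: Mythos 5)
The paper does not actually prove Lemma \ref{lem-kod}; it is quoted from \cite{KMM,ST3}, so your argument can only be measured against the standard one. The first part of your proposal is essentially that standard route: setting $g=\Phi_{|mH|}\circ\pi$, one seeks an effective $E$ with $-E$ $g$-ample and supported in $\mathrm{Exc}(g)$, and then $m\pi^{*}H-\delta E=g^{*}\mathcal{O}_{Y}(1)-\delta E$ is ample for small $\delta$. Two caveats, though. Your justification of the ampleness step via ``compactness of bounded-degree curve classes'' is not valid as stated (classes of irreducible curves of bounded degree against the merely big and nef class $g^{*}\mathcal{O}_{Y}(1)$ need not be bounded, and positivity on each curve does not give ampleness); the correct tool is the standard fact that an $f$-ample class plus a large multiple of the pull-back of an ample class from the base is ample. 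More seriously, the relative lemma you invoke (``for any projective birational morphism from a smooth variety to a normal base there is an effective $E$ supported in $\mathrm{Exc}(g)$ with $-E$ $g$-ample'') is false in that generality: only $X$, not $Y$, is known to be $\mathbb{Q}$-factorial, so $\mathrm{Exc}(g)$ may have codimension-two components (small contractions admit smooth sources), over which no exceptional divisor can be relatively anti-ample; the negativity-lemma construction $E=g^{*}g_{*}A-A$ requires $g_{*}A$ to be $\mathbb{Q}$-Cartier, which is not known for $Y$.

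The genuine gap, however, is exactly the step you flag at the end: forcing $\mathrm{supp}(E)\subseteq \mathrm{Exc}(\pi)$. Your claim that a Hironaka resolution ``dominating $\Phi_{|mH|}$'' gives $\mathrm{Exc}(g)=\mathrm{Exc}(\pi)$ automatically is wrong: $\Phi_{|mH|}$ is already a morphism, and $\mathrm{Exc}(g)=\mathrm{Exc}(\pi)$ holds precisely when $\Phi_{|mH|}$ is an isomorphism away from $\pi(\mathrm{Exc}(\pi))$, which no choice of resolution can arrange if $\Phi_{|mH|}$ contracts a subvariety of $X_{reg}$. Nor can any push-forward or descent argument repair this, because the statement itself fails without that hypothesis: if $C'\subset X'$ is a curve with $g(C')$ a point and $C'\not\subset \mathrm{Exc}(\pi)$, then for every effective $E$ supported in $\mathrm{Exc}(\pi)$ one has $(\pi^{*}H-\delta E)\cdot C'=-\delta E\cdot C'\leq 0$, so no such $E$ can make $\pi^{*}H-\delta E$ ample. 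Concretely, take $X$ the blow-up of $\mathbb{P}^{2}$ at a point, $H$ the pull-back of a line, and $\pi=\mathrm{id}$ (a legitimate resolution with $\mathrm{Exc}(\pi)=\emptyset$): $H$ is big and semi-ample but not ample, so the lemma as literally stated fails. What makes the lemma usable in this paper is the implicit hypothesis recorded in Assumption \ref{mainass}, namely that $\Phi\circ\pi$ is non-degenerate away from $\mathrm{Exc}(\pi)$, i.e.\ that the null locus (augmented base locus) of $\pi^{*}H$ is contained in $\mathrm{Exc}(\pi)$, which is purely divisorial because $X$ is $\mathbb{Q}$-factorial. Under that hypothesis your first three steps, repaired as above, already give the conclusion and the descent step is unnecessary; without it the conclusion is false, so the ``delicate curve-by-curve Kleiman argument'' you hope will close the gap cannot exist.
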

	
	\subsection{PSH functions}
	
	Let $X$ be a $\mathbb{Q}$-factorial projective variety with a resolution $\pi:X'\to X$ as above.   Now we define a global semi-positive 1-1 form $\omega_0$ on $X$ so that $\pi^* \omega_0$ is smooth on $X'$.
	As in \cite{ST3} there exists a big and semi-ample $\mathbb{Q}$-Cartier divisor $H$ on $X$.  Thus we obtain a birational morphism $\Phi_{|mH|}: X\to\mathbb{CP}^{N_{m}}$ for some large integer $m$ and some $N_m$.  We define
	$$\dr_{0}:=\frac{1}{m}\Phi_{|mH|}^{*}\dr_{FS}\in[H]$$ which is semi-positive current on $X$, where $\dr_{FS}$ is the Fubini-Study metric on $\mathbb{CP}^{N}$.  In particular, $\Phi_{|mH|}$ is a holomorphic map on $X_{\rm reg}$ while $\pi^* \dr_{0}$ is a smooth semi-positive closed $1-1$ form on the smooth variety $X'$.   We may conveniently define plurisubharmonic functions on $X$ relative to any such form on $X$ as follows

	\begin{definition}
		Let $\omega$ be a closed 1-1 form on $X_{\rm reg}$ such that $\pi^*\omega$ extends smoothly to $X'$.   We say function $\varphi: X\to[-\infty,+\infty)$ is $\dr$-PSH on $X$ if, $u+\varphi \circ \pi$ is a classical plurisubharmonic function in local holomprhic coordinates for any local potential $u$ for $\omega$ (ie, $\ddb u =\omega$).\end{definition}
	
	As we mentioned in the introduction, unlike \cite{EGZ,Ko,ST3}, we need to deal with the currents with unbounded local potentials.  On the other hand, we will restrict to considering PSH functions on $X$ with so called zero Lelong number as in the following

	\begin{definition}\label{lem-zero Lelong}
		Suppose $\varphi$ is an $\dr$-PSH potential function on $X$ and let $E$ be a divisor on $X'$.  We say $\varphi$, or equivalently $\dr+\ddb \varphi$, has zero Lelong numbers along $\pi(E)$ if for any $\e>0$ there exists a constant $C_{\e}$ such that the pull-back $\pi^{*}\varphi$ on $X'$
		satisfies
		\begin{equation}\label{eq:zero-Lelong}
			\pi^{*}\varphi\geq\e \log|S|^{2}+C_{\e}
		\end{equation}
		where $S$ is a holomorphic section, and $|\cdot|$ is a Hermitian metric,  associated to the holomorphic line bundle associated to $E$.
	\end{definition}

	We may now define weak solutions to \eqref{eq:krf} as follows
	
	\begin{definition}\label{weaksolkrf}

		Let $\varphi_{0}$ be an $\omega_0$-PSH function on $X$ for some closed 1-1 form $\omega_0$ on $X$ which is also smooth on $X_{\rm reg}$.   We say a family of closed 1-1 forms $\omega(t)$ is a weak solution to the \ka \, Ricci flow
		
		\begin{equation}\label{eq:krfII}
			\left\{
			\begin{array}{ll}
				\displaystyle\dt\dr &=-Ric(\dr)\\
				\dr(0) &= \dr_0+\ddb\varphi_{0},
			\end{array}
			\right.
		\end{equation}
		on $X\times [0, T)$ if
		
		\begin{enumerate}
			\item $ \omega(t)$ restricts to a smooth solution to \eqref{eq:krfII} on $X_{\rm reg} \times (0, T)$.
			\item $\omega(t)=\omega_0 -t \eta +\ddb \varphi(t)$ on $X \times(0,T)$ for some $\eta\in [K_X]\bigcap C^{\infty}(X_{\rm reg})$ where $[K_X]$ denotes the canonical class of $X$ and $\varphi(t)$ is a $\omega_0 -t \eta$ PSH function on $X$.
			\item $\varphi(t)\to  \varphi_0$ in $L^1(X)$ as $t\to 0$.
		\end{enumerate}
		
		A weak solution $\omega(t)=\omega_0 -t \eta +\ddb \varphi(t)$ to the \ka \, Ricci flow on $X\times [0, T)$ above is called $maximal$ if given any other weak solution  $\omega'(t)=\omega_0 -t \eta +\ddb \varphi'(t)$ with $\varphi(0)=\varphi'(0)$, we have $\varphi(t)\geq \varphi'(t)$ on $X\times [0, T)$.

		We could likewise define a weak solution on $X$ in terms of the resolution $\pi:X'\to X$ as follows.  In conditions (1)-(3) above we replace $\omega_0, \omega(t), X, X_{\rm reg}$ and $\eta$ respectively with
		$\pi^*\omega_0, \omega'(t), X', X'\setminus Exc(\pi)$  and $\eta$ by any smooth representative $\eta'$ of $\pi^*K_{X}$ on $X'$, provided we then require the solution $\omega'(t)$ ``descends" to $X$ in the sense that:  for each $p\in X$, if $\pi^*\omega_0 -t \eta' =0$ on $\pi^{-1}(p)$ then $\varphi$ is constant on $\pi^{-1}(p)$.

	\end{definition}
	
	With the above definitions and results, we may now summarize once and for all, the main assumptions and notations we will adopt throughout the paper.

	\begin{assumption}\label{mainass}
		Let $X$ be a $\mathbb{Q}$-factorial projective variety with log canonical singularities and $H$ be a big and semi-ample $\mathbb{Q}$-Cartier divisor on $X$.  Consider the maps
		
		$$X' \xrightarrow{\pi} X \xrightarrow{\Phi} \mathbb{CP}^{N}$$
		where $\pi$ is a resolution of $X$ and $\Phi$ is a birational morphism, generated by $H,$ for some $N$.  In particular, the map $\Phi\circ \pi$ is holomorphic and non-degenerate away from the exceptional locus $Exc(\pi)$.  Let $\theta$ be a smooth \ka\,form and $\Omega'$ be a smooth volume form on $X'$.  Then we make the following assumptions and definitions
		
		\begin{enumerate}
			\item $Exc(\pi)$ is the union of simple normal crossing log canonical, log terminal and canonical divisors on $X'$ which we respectively denote by $D_i, E_j, F_k$.  In particular, we have
			\begin{align}\label{eq:coho def 1}
				\pi^* K_X=& K_{X'}+\sum_{i}D_{i}+\sum_{j}b_{j}E_{j}-\sum_{k}a_{k}F_{k},
			\end{align}
			where  $0\leq a_{k}$ and $0<b_{j}<1$  for all $k, j$.

			\item $S_i, S_j, S_k, \tilde{S}$ will respectively denote holomorphic sections of the line bundles associated with $D_i, E_j, F_k, \tilde{E}$.  $|S_i|, |S_j| , |S_k|, |\widetilde S|$ will respectively denote lengths relative to hermitian metrics $h_i, h_j, h_k, \tilde{h}$.   $\Theta_i, \Theta_j, \Theta_k, \tilde{\Theta}$  will respectively denote the curvature forms of $h_i, h_j, h_k, \tilde{h}$.
			
			\item  $T_{0}:=\sup\{t>0| \pi^* H+t(K_{X'} +\sum_{i}D_{i}+\sum_{j}b_{j}E_{j}-\sum_{k}a_{k}F_{k}  )   \;is\; nef\}=\sup\{t>0| H+t(K_{X}) \;is\; nef\}$

			\item $\dr_{0}:=\Phi^{*}\dr_{FS}\in[H]$ on $X$ where $\dr_{FS}$ is the Fubini-Study metric on $\mathbb{CP}^{N}$ and the smooth semi positive form $\pi^* \dr_{0}\in [\pi^{*}H] $ satisfies
			
			\begin{equation}\label{lowerdegeneratebound}
				\pi^* \dr_{0}\geq |\widetilde S|^{c} \theta
			\end{equation}
			on $X'$ for some $c>0$.
			
			\item
			$\widetilde E$ and $d>0$ are as in Lemma \ref{lem-kod}.  In particular, the support of $\widetilde E$ is contained in $Exc(\pi)$ and for all $0<\de <d$ we have $\pi^* \omega_0 +\de \ddb \log |\widetilde S|^2 \geq c_{\de} \theta$ for some $c_{\de} >0$.

			\item \color{black}{$\varphi_{0}$ is as in Definition \ref{weaksolkrf} and satisfies $\pi^*\varphi_{0}\in PSH(X',\pi^*\dr_{0})\bigcap L^{\infty}_{loc}(X'\setminus \widetilde E)$ and $\pi^*\dr_{0}+\pi^*\ddb\varphi_{0}$ is a current with zero Lelong number on $X'$.}
		\end{enumerate}
	\end{assumption}

	\section{A degenerate parabolic \MA\ equation}
	
	To transform the \ka-Ricci flow \eqref{eq:krf} on $X$ to a complex \MA\ flow equation, we consider a corresponding degenerate complex \MA\ flow equation on the resolution $X'$ as in \cite{BBEGZ,ST3}.  Define the following smooth family of closed 1-1 forms on $X'$
	
	\begin{equation}\label{eq:bg 1}
		\dr_{t}:=\pi^{*}\dr_{0}+t\chi=\pi^{*}\dr_{0}+t(-Ric(\Omega')+\sum_{i}\Theta_{i}+\sum_{j}b_{j}\Theta_{j}-\sum_{k}a_{k}\Theta_{k}).
	\end{equation}
	Now for a given family of $\dr_{t}$ plurisubharmonic functions $\varphi(t)$ on $X' \times [0, T)$, define the family of forms
	\begin{equation}\label{omega(t)}
		\omega(t):=\dr_{t}+\ddb\varphi
	\end{equation}
	on $X' \times [0, T)$.   By a straight forward computation using  Poincar\'{e}-Lelong Formula, it follows that $\omega(t)$ is a weak solution to \ka\, Ricci flow as in definition \ref{weaksolkrf}  on $(X'\setminus Exc(\pi)) \times (0, T)$ provided that $\varphi$ is a smooth solution to the equation
	\begin{equation}\label{eq:ma flow 1}
		\left\{
		\begin{array}{ll}
			\displaystyle\dt\varphi &=\displaystyle\log\frac{(\dr_{t}+\ddb\varphi)^{n}\prod_{i}|S_{i}|_{i}^{2}\prod_{j}|S_{j}|_{j}^{2b_{j}}}{\Omega'\prod_{k}|S_{k}|_{k}^{2a_{k}}}\\
			\varphi(0) &= \pi^{*}\varphi_{0},
		\end{array}
		\right.
	\end{equation}
	on $(X'\setminus Exc(\pi)) \times (0, T)$ with $\varphi(t)\to \pi^* \varphi_0$ in $L^1(X')$ as $t\to 0$.

	In particular, if $\varphi$ solves \eqref{eq:ma flow 1} in the above sense and $\omega(t)$ descends to $X$, then we obtain a weak solution to \eqref{eq:krf} on $X \times [0, T)$ as in Definition \ref{weaksolkrf}.  Note that as $T_{0}:=\sup\{t>0| \pi^* H+tK_{X'}\;is\; nef\}$ as in Assumption \ref{mainass}, the adjunction formula in Assumption \ref{mainass} (1) implies that the smooth background form $\dr_{t}$ is nef on $X'$ for all $t\in [0, T_0)$, and it follows that for any $T'<T_0$, we may have $\dr_{t}+ \ddb \psi_{T'} \geq 0$ for some $\psi_{T'}\in C^{\infty}(X')$.  As mentioned in the introduction however, due to the existence of lc divisors in the resolution \eqref{eq:adjunction 1}, we cannot make direct use of the estimates in \cite{EGZ, ST3} to construct solutions to  \eqref{eq:ma flow 1}.  We will establish the a priori estimates of \eqref{eq:ma flow 1} instead through an approximation process in the next section involving the use of both the approximate conical \ka\, metrics and Carlson-Grifiths metrics on $X'$.   Solving \eqref{eq:ma flow 1} for a zero Lelong number solution $\varphi(t)$ on $X'\setminus{\tilde{E}} \times[0, T_0)$ can be regarded as the chief analytic goal of this paper.

\subsection{An approximate equation (existence)}

To study solutions to \eqref{eq:ma flow 1} on $X',$ we need to overcome the singularities in the equation corresponding to the lc divisors $D_{i}$ and lt divisors $E_{j}$.  We will do this by perturbing these singular terms in \eqref{eq:ma flow 1} to arrive at an approximate equation which is known to have a solution on $X'\times[0, T_0)$.   We begin with the following Lemmas which will be used in this perturbative process.

The following approximation lemma follows from Theorem 2 in \cite{Blo} and will be used to deal with the singularities in the initial potential $\pi^* \phi_0$.
\begin{lemma}[\cite{Blo}] \label{lem-demailly}
Given a smooth semi-positive $(1,1)$-form $\dr_{0}$ on $X'$ and a $\dr_{0}$-PSH function $\varphi,$ there exist non-increasing sequence of functions $\{\varphi_{l}\}$ on $X'$ such that 
\begin{enumerate}
\item each $\varphi_l$ is smooth on $X'$ and a  $\dr_{0}+l^{-1}\theta$-PSH function,
\item  $\varphi_{l}\searrow \varphi$ pointwise on $X'$,
\item $(\dr_{0}+\ddb\varphi_{l})\to(\dr_{0}+\ddb\varphi)$ in the current sense on $X'$.
\end{enumerate}
\end{lemma}

We will use the following approximate conical \ka \, forms to handle the singularities in \eqref{eq:ma flow 1} around the log terminal divisors $E_j$.
\begin{lemma}[\cite{GP}]\label{conicalapprox}
Define the function

\begin{equation}\label{conic-regg}
\mathcal{F}(t,\beta,\e):=\frac{1}{\beta}\int_{0}^{t}\frac{(r+\e)^{\beta}-\e^{\beta}}{r}dr,
\end{equation}
Given a \ka \, form $\theta$ on $X'$ and Hermitian metric $h_j$ on $S_j$, there exists $\eta>0$ such that for all $\e$ sufficiently small,  $\theta + \eta\ddb\mathcal{F}(|S_{j}|_{j}^{2},\beta,\e^{2})$ is a \ka \, form on $X'$ and is uniformly (over  $\e$) equivalent to the local model
\begin{align}\label{eq:cone-app}
 \sqrt{-1} \sum_{j=2}^{n} dz^j \wedge dz^{\bar{j}} +\sqrt{-1}\frac{dz_{1}\wedge d\bar{z}_{1}}{(|z_{1}|^{2}+\e^{2})^{1-\beta}}
\end{align}
 in local holomorphic coordinates where $E_j=\{z_1=0\}$.

\end{lemma}

The singularities in \eqref{eq:ma flow 1} around the log canonical divisors $E_j$ will be dealt with through the use of Carlson-Grifiths forms on which we have the following lemma (see \cite{CG} or  \cite{Guenancia})

\begin{lemma}[\cite{CG}]\label{CGlemma}
Given a \ka \,  form $\theta$ on $X'$ and Hermitian metric $h_i$ on $S_i$, we may scale $h_j$ so that the Carlson-Griffiths type form \begin{equation}\label{CGexpansion}\begin{split}\widehat{\omega}_{\theta, h}:=&\theta - \ddb (\log\log^2 \|S\|^{2}_{h})\\
&=\theta -2\frac{\ddb \log \|S\|^{2}_{h}}{\log \|S\|^{2}_{h}}+2 \frac{\partial \log \|S\|^{2}_{h}}{\log \|S\|^{2}_{h}}\wedge \frac{\bar{\partial} \log \|S\|^{2}_{h}}{\log \|S\|^{2}_{h}}\\
\end{split}
\end{equation}
satisfies

\begin{enumerate}
\item $\hat{ \omega}_{\theta,  h}$ is a complete \ka  \, metric on $X' \setminus \bigcup_i D_i $, and for any $D_k$ it is equivalent to the local model \begin{equation}\label{localCGmodel}  \sqrt{-1}\sum_{j=2}^{n} dz^j \wedge dz^{\bar{j}}+\sqrt{-1} \displaystyle \frac{ dz^1 \wedge dz^{\bar{1}}}{|z^1|^2 \log^2 |z^1|^2} \end{equation} in local holomorphic coordinates around any point $p\in D_k$, were $D_k=\{z_1=0\}$.
\item $\hat{ \omega}_{\theta, h}$ has bounded geometry of infinite order.
\item $-\log\log^2 \|S\|^{2}_{h}$ is bounded above and in $L^1(X')$
\item $\log\frac{\widehat{\omega}_{\theta, h}^{n}\|S\|_{h}^{2}\log^{2}\|S\|_{h}^{2}}{\theta^n}$ is bounded on $X'\setminus  \bigcup_i D_i $.  \end{enumerate}
 In particular (3)  implies that $\widehat{\omega}_{\theta, h}$ is a well defined current on $X'$. (see for example \cite{LZ} (\S8, example 8.15)).
\end{lemma}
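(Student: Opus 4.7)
The plan is to verify the displayed expansion directly and then read off each of the four conclusions from the local product model \eqref{localCGmodel}. First I would set $u := \log \|S\|^{2}_{h}$ (which is strictly negative near each $D_k$ after rescaling the Hermitian metrics $h_i$ appropriately) and apply the standard identity $\ddb \log f = f^{-1}\ddb f - f^{-2}\partial f \wedge \bar{\partial} f$ to $f = u^2$. Combined with Poincar\'e-Lelong, which gives $\ddb u = -\Theta_{h}$ away from $\bigcup_k D_k$, this yields exactly the expansion of $\widehat{\omega}_{\theta,h}$ written in \eqref{CGexpansion}. Positivity on $X'\setminus \bigcup_k D_k$ then follows because $\partial u \wedge \bar{\partial} u /u^2$ is semipositive, while rescaling $h_i \mapsto \e\, h_i$ with $\e$ sufficiently small makes $|u|$ uniformly large on $X'$, which in turn makes $2\Theta_h/u$ arbitrarily small relative to $\theta$.

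Next I would verify the local model \eqref{localCGmodel} around a smooth point of $D_k = \{z_1 = 0\}$ by choosing a holomorphic frame with $\|S\|^{2}_{h} = |z_1|^2 e^{-\phi}$ for a smooth function $\phi$. Substituting into the expansion and discarding smooth or lower-order terms gives to leading order $\theta + \sqrt{-1}\, dz_1 \wedge d\bar{z}_1/(|z_1|^2 \log^2|z_1|^2)$. At simple normal crossing points the identical computation is performed on each factor and the local models add, producing the claimed Poincar\'e-type cusp product. Completeness in (1) then follows from the divergence of the radial integral $\int_0^{1/2} dr/(r|\log r|) = +\infty$. Claim (3) is immediate: $-\log\log^{2}\|S\|^{2}_{h}$ is bounded above once $|u|$ is bounded below, and in local coordinates the negative singular behaviour $-\log|\log|z_1||$ is integrable against Lebesgue measure since it grows slower than any negative power of $|z_1|$.

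For the volume form comparison in (4), I would wedge the local expansion with itself $n$ times: the singular factor $\|S\|^{-2}_{h}\log^{-2}\|S\|^{2}_{h}$ appearing in $\widehat{\omega}^{n}_{\theta,h}$ exactly cancels $\|S\|^{2}_{h}\log^{2}\|S\|^{2}_{h}$, so the ratio to $\theta^n$ extends to a smooth, strictly positive function on $X'\setminus \bigcup_k D_k$ that is bounded above and below by positive constants. Taking logarithms yields the required bounded function on $X'\setminus \bigcup_k D_k$.

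The main obstacle will be (2), bounded geometry of infinite order. Here I plan to use the classical fact that the local model \eqref{localCGmodel} is quasi-isometric to the standard cusp metric on a punctured polydisk, which is covered by the polydisk $\Delta^n$ via $z_1 \mapsto \exp((z_1+1)/(z_1-1))$ (and the identity on the remaining factors), and which pulls back to a metric uniformly equivalent to a flat hyperbolic model whose curvature and its iterated covariant derivatives are uniformly bounded. The key technical point is that on overlapping cusp charts the transition maps have derivatives controlled uniformly up to every order, so that the a priori covering-space estimates descend consistently to $\widehat{\omega}_{\theta,h}$ on a fixed-size atlas of quasi-isometric charts near $\bigcup_k D_k$. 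Organizing this chart-by-chart verification while tracking the simple normal crossing structure is where the bulk of the technical work lies.
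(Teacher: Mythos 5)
The paper does not prove Lemma \ref{CGlemma} itself: it is cited as known from \cite{CG} and \cite{Guenancia} (and, for bounded geometry of Carlson--Griffiths metrics, implicitly \cite{Koba,TY}). Your proposal reconstructs the standard classical proof of exactly these references, and the steps you give are correct: the $\ddb\log(u^2)$ identity with Poincar\'e--Lelong producing \eqref{CGexpansion}; rescaling the $h_i$ so that $|\log\|S\|_h^2|$ is uniformly large, which simultaneously makes the $2\Theta_h/u$ term small relative to $\theta$ and makes $-\log\log^2\|S\|_h^2$ bounded above; the cusp local model and the ensuing completeness and $L^1$ conclusions; the volume-form cancellation for (4); and the quasi-coordinate covering argument for bounded geometry of infinite order in (2).

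The only places your sketch could be tightened, though none is a genuine gap: (a) around a simple normal crossing point of several $D_i$ the local model is a product of Poincar\'e cusp factors rather than \eqref{localCGmodel} with a single singular direction, and the bounded-geometry verification must be carried out in product quasi-coordinates near those strata; you flag this at the end but it deserves to be stated explicitly, since the statement of the lemma is phrased only for a single $D_k$. (b) For the quasi-coordinate step the precise uniformity one needs is that the pullback of $\widehat{\omega}_{\theta,h}$ to a fixed reference polydisk, under the family of holomorphic quasi-coordinate maps parametrized by points approaching $\bigcup_i D_i$, has metric coefficients with $C^m$ bounds depending only on $m$; asserting that "transition maps have controlled derivatives" is necessary but not quite the point --- the essential input is that the pullbacks of the fixed smooth data ($\theta$, $\Theta_h$, $\phi$) in these coordinates remain uniformly $C^\infty$-bounded, which is what \cite{Koba,TY} establish. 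With those two clarifications your argument is the classical one and is complete.
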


Now we may write our approximation of \eqref{eq:ma flow 1}.  For any positive integer $l$, and real numbers $\eta, v, \e >0$ and consider the equation

\begin{equation}\label{eq:ma flow 4}
\left\{
   \begin{array}{ll}
     \displaystyle\dt\varphi'_{l,v,\e} &=\displaystyle\log\frac{(\dr'_{t,l,v, \e}+\ddb\varphi'_{l,v,\e})^{n}\prod_{i}|S_{i}|_{i}^{2}\log^{2}|S_{i}|_{i}^{2}
     \prod_{j}(|S_{j}|_{j}^{2}+\e^{2})^{b_{j}}}{\Omega'\prod_{k}(|S_{k}|_{k}^{2}+\e^{2})^{a_{k}}}\\
     \;\\
     \varphi'_{l,v,\e}(0) &= \varphi_{l,0}-\eta\sum_{j}\mathcal{F}(|S_{j}|_{j}^{2},1-b_{j},\e^{2}).
   \end{array}
 \right.
\end{equation}
where
\begin{align} \nonumber
\dr'_{t,l,v,\e}:&=\pi^{*}\dr_{0}+l^{-1}\theta+t\chi-(t+v)\sum_{i}\ddb\log\log^{2}|S_{i}|_{i}^{2}\nonumber\\\nonumber
&+\eta\sum_{j}\ddb\mathcal{F}(|S_{j}|_{j}^{2},1-b_{j},\e^{2}),
\end{align}

Here $\varphi_{l,0}$ is the non-increasing sequence of $\pi^* \omega_0+l^{-1}\theta$ PSH functions given by Lemma \ref{lem-demailly} where $\pi^* \varphi_0$ is the initial condition for  \eqref{eq:ma flow 1} and $\eta>0$ is chosen as in  Lemma \ref{conicalapprox} relative to $\theta$.
We suppress the dependence of $\varphi$ on $\eta$ as this parameter will be fixed at some point in our arguments, and we will not need to let $\eta$ pass to any limits. This is partially related to the fact that the initial form $\dr'_{0,l, v,\e}+\ddb \varphi'_{l,v,\e}(0)$ is independent of $\eta$ and the following remark which explains the sense in which equation \eqref{eq:ma flow 4}  is a perturbation of \eqref{eq:ma flow 1}.

\begin{remark}\label{r00}
If a family of solutions $\varphi'_{l,v,\e}$ to \eqref{eq:ma flow 4} converges locally smoothly to a limit $\varphi'\in C^{\infty}(X' \setminus \widetilde{E} \color{black}{ \times(0, T_0)}\color{black}{})$, as we let $l\to \infty$ and $v,\e, \to 0$, then $\varphi'$ will solve the equation

\begin{equation}\label{eq:ma flow 2}
\left\{
   \begin{array}{ll}
     \displaystyle\dt\varphi' &=\displaystyle\log\frac{(\dr'_{t}+\ddb\varphi')^{n}\prod_{i}|S_{i}|_{i}^{2}\log^{2}|S_{i}|_{i}^{2}
     \prod_{j}(|S_{j}|_{j}^{2})^{b_{j}}}{\Omega'\prod_{k}(|S_{k}|_{k}^{2})^{a_{k}}}\\
     \;\\
     \varphi'(0) &= \pi^* \varphi_{0}-\eta\sum_{j}\mathcal{F}(|S_{j}|_{j}^{2},1-b_{j},0).
   \end{array}
 \right.
\end{equation}
on $(X' \setminus \widetilde{E} \color{black}{ \times(0, T_0)}\color{black}{})$ where
\begin{align} \nonumber
\dr'_{t}:&=\pi^{*}\dr_{0}+t\chi-t\sum_{i}\ddb\log\log^{2}|S_{i}|_{i}^{2}+\eta\sum_{j}\ddb\mathcal{F}(|S_{j}|_{j}^{2},1-b_{j},0),
\end{align}

  In particular, $\varphi =\varphi' +\eta\sum_{j}|S_{j}|_{j}^{2(1-b_{j})}-t \log \log^{2}|S_{i}|_{i}^{2}$ will solve \eqref{eq:ma flow 1} on $X' \setminus \widetilde{E}\color{black}{ \times(0, T_0)}\color{black}{}) $.
\end{remark}

 Roughly speaking, we have perturbed so that the corresponding background form
$\dr'_{t,l,v,\e}$ will be equivalent to a Carlson-Grifiths form around the log canonical divisors $D_i$, and will be approximately conical near the log terminal divisors $E_j$.  This will essentially allow us to combine the techniques from \cite{BG,CLS,EGZ, LZ} with those in  \cite{ST3} in our study of \eqref{eq:ma flow 4}.   In particular, we have the following existence theorem essentially due to \cite{LZ}. 

\begin{theorem}\label{thmLZ}
Suppose $\dr'_{0,l,v,\e} + \ddb \varphi'_{l,v,\e}(0)$ is \ka \, on $X'\color{black}{ \setminus  \bigcup_{i}D_{i}}\color{black}{}$.  Then \eqref{eq:ma flow 4} has a solution $\varphi'_{l,v,\e}\in C^{\infty}((X' \setminus \bigcup_{i}D_{i})\times[0, T_0)) \bigcap L^{\infty}((X' \setminus \bigcup_{i}D_{i})\times[0, T_0))$.  Moreover, the family of K\"{a}hler metrics  $\dr'_{t,l,v,\e}+\ddb\varphi'_{l,v,\e}$ is equivalent to a Carlson-Grifiths form on $X' \setminus \bigcup_{i}D_{i}$, and has bounded curvature, for all $t\in [0, T_0)$.  \end{theorem}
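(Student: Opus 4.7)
The plan is to reduce the existence of a solution to \eqref{eq:ma flow 4} to the existence theory for the \ka-Ricci flow on complete non-compact \ka\ manifolds with bounded geometry of infinite order, as developed in \cite{LZ} (and also \cite{Chau} in the unnormalized case). The key observation is that with $u, v, \e_i, \e_j > 0$ fixed, the background form $\dr'_{t,u,v,\tilde{\e}_{j}}$ is manifestly a \emph{complete} \ka\ metric on $X' \setminus \bigcup_i D_i$ with bounded geometry, and the driving term on the right-hand side of \eqref{eq:ma flow 4} is smooth and bounded relative to this metric.

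First, I would verify that at time $t=0$ the form $\dr'_{0,u,v,\tilde{\e}_{j}} + \ddb \varphi'_{l,u,v,\tilde{\e}}(0)$ is a complete \ka\ metric of bounded geometry on $X' \setminus \bigcup_i D_i$ by combining the three positive contributions: (i) $\pi^*\dr_0 + u\theta \geq u\theta > 0$ on all of $X'$ since $\theta$ is \ka\ (see Assumption \ref{mainass}); (ii) the Carlson-Griffiths term $-v \sum_i \ddb\log\log^2|S_i|_i^2$ is a complete \ka\ form on $X'\setminus \bigcup_i D_i$ with bounded geometry of infinite order by Lemma \ref{CGlemma}(1)-(2); (iii) the regularized conical term $\eta \sum_j \ddb \mathcal{F}(|S_j|_j^2, 1-b_j, \e_j^2)$ is a smooth \ka\ form on all of $X'$ by Lemma \ref{conicalapprox}, and the perturbation $\varphi'_{l,u,v,\tilde{\e}}(0) = \varphi_{l,0} - \eta\sum_j \mathcal{F}$ is a smooth bounded function whose $\ddb$ exactly cancels the $\mathcal{F}$-term. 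So $\dr'_{0,u,v,\tilde{\e}_{j}} + \ddb\varphi'_{l,u,v,\tilde{\e}}(0) = \pi^*\dr_0 + u\theta + \ddb \varphi_{l,0} - v\sum_i \ddb \log\log^2|S_i|_i^2$, which by the hypothesis is \ka, and which inherits completeness and bounded geometry from the Carlson-Griffiths summand.

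Second, I would check that the forcing function on the right-hand side is smooth and bounded on $X' \setminus \bigcup_i D_i$: after a change of variables to $\psi = \varphi'_{l,u,v,\tilde{\e}}$, equation \eqref{eq:ma flow 4} reads $\partial_t \psi = \log \frac{(\omega + \ddb \psi)^n}{\hat{\Omega}}$ where
\[
\hat{\Omega} = \Omega' \prod_k (|S_k|_k^2+\e_k^2)^{a_k} \Big/ \Big( \prod_i |S_i|_i^2 \log^2 |S_i|_i^2 \prod_j (|S_j|_j^2+\e_j^2)^{b_j}\Big).
\]
Since $\e_j, \e_k > 0$ are fixed, the $(|S_j|^2+\e_j^2)^{b_j}$ and $(|S_k|^2+\e_k^2)^{a_k}$ factors are smooth and bounded. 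By Lemma \ref{CGlemma}(4) the ratio $\hat{\Omega}/\dr'^n_{0,u,v,\tilde{\e}_j}$ is bounded and bounded away from zero on $X' \setminus \bigcup_i D_i$, and similarly for all $t \in [0, T_0)$ since the background form evolves only by adding $t\chi + t\sum_i \ddb(-\log\log^2|S_i|^2)$ which stays within the bounded-geometry class. Thus the initial Ricci potential $\log(\dr^n/\hat{\Omega})$ is a bounded smooth function on $X'\setminus \bigcup_i D_i$.

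Third, I would invoke the main existence theorem of \cite{LZ} for the \ka-Ricci flow on complete bounded-geometry backgrounds. In the form needed here, \cite{LZ} provides a smooth solution $\psi \in C^\infty((X'\setminus\bigcup_i D_i) \times [0, T_{\max}))\cap L^\infty$ and, more importantly, shows that the evolved metric remains uniformly equivalent to a Carlson-Griffiths type metric (with bounded curvature) as long as the time-$t$ cohomology class admits a smooth semipositive representative. For us this is guaranteed on $[0, T_0)$ because $\pi^*H + t K_{X'}$ is nef there by the definition of $T_0$ together with the adjunction relation \eqref{eq:coho def 1} in Assumption \ref{mainass}(1), and Kodaira's Lemma (Lemma \ref{lem-kod}) gives a smooth $\psi_{T'}$ with $\dr'_{t,u,v,\tilde{\e}_j} + \ddb \psi_{T'} \geq 0$ on any $[0, T']$ with $T' < T_0$. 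Combined with the standard parabolic Schwarz Lemma / second-order estimate applied to the Carlson-Griffiths reference metric, this yields long-time existence up to $T_0$ together with the claimed equivalence and bounded-curvature conclusion.

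The main obstacle I expect is simply verifying that the complete non-compact \ka-Ricci flow machinery of \cite{LZ} applies cleanly to our hybrid Carlson-Griffiths plus approximate-conical background, and that the metric equivalence to $\widehat{\dr}_{\theta,h}$ is preserved along the flow; everything else is routine given Lemmas \ref{lem-demailly}--\ref{CGlemma}. The actual uniform-in-$(l,u,v,\tilde{\e})$ estimates are a separate task to be tackled in the next subsection, not required for this existence statement.
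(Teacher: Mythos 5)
Your overall strategy is the same as the paper's: normalize \eqref{eq:ma flow 4} against the complete bounded-geometry Carlson-Griffiths reference metric $\widetilde{\omega}$ and invoke the existence theorem of \cite{LZ}. Your cancellation of the $\eta\mathcal{F}$-term at $t=0$ is also correct. However, the final step — verifying the positivity hypothesis of \cite{LZ} up to $T_0$ — has a real gap.

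The condition needed from \cite{LZ} is not merely that the class $[\dr'_{t,u,v,\tilde{\e}_j}]$ is nef or admits a smooth semipositive representative; it is the sharper inequality
\[
\sigma_T \;\geq\; c\,\widetilde{\omega} + \ddb F
\]
for some $c>0$ and a function $F$ that is \emph{bounded on $X'\setminus\bigcup_i D_i$ together with all covariant derivatives relative to $\widetilde{\omega}$}, where $\sigma_T=\widetilde{\omega}-T\,\mathrm{Rc}(\widetilde{\omega})+(\text{twist terms})$. Your proposed mechanism — ``Kodaira's Lemma gives a smooth $\psi_{T'}$ with $\dr'_{t,u,v,\tilde{\e}_j}+\ddb\psi_{T'}\geq 0$'' — does not deliver this. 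Lemma \ref{lem-kod} produces the weight $\de\log|\widetilde S|^2$, which blows down to $-\infty$ along $\widetilde{E}\supset\bigcup_i D_i\cup\bigcup_j E_j\cup\bigcup_k F_k$ and so is neither smooth on $X'$ nor bounded with respect to $\widetilde{\omega}$; and even setting aside the regularity of the weight, nefness of $[\dr'_{t,u,v,\tilde{\e}_j}]$ is not the same statement as $\sigma_T\geq c\,\widetilde{\omega}+\ddb F$, because $\sigma_T$ involves $-T\,\mathrm{Rc}(\widetilde{\omega})$ rather than the background form. The paper closes this gap differently: it expands $\mathrm{Rc}(\widetilde{\omega})$ explicitly via the Carlson-Griffiths Ricci formula (Lemma 8.6 of \cite{LZ}), decomposes $\sigma_T=I+II+III+IV$, and observes that $I+II$ lies above $c\,\widetilde{\omega}+\ddb F$ precisely because $T<T_0$ (using Assumption \ref{mainass}(4)) while $III$ and $IV$ are $\ddb$ of functions that are bounded together with their $\widetilde{\omega}$-covariant derivatives — the latter being exactly the extra regularity that the $\e_j,\e_k>0$ smoothing provides. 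You should carry out this explicit decomposition rather than appealing to Kodaira's Lemma at this step.

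A smaller point: after the change of variables you write $\partial_t\psi=\log\frac{(\omega+\ddb\psi)^n}{\hat\Omega}$ and argue that $\hat\Omega/\widetilde{\omega}^n$ is bounded, which is Lemma \ref{CGlemma}(4); that part is fine and matches the paper. But the boundedness of the forcing term alone does not substitute for the quantitative lower bound on $\sigma_T$ — both are needed to apply \cite{LZ} up to time $T_0$.
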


\begin{proof}
Consider the Carlson Grifiths metric on $X'\color{black}{ \setminus  \bigcup_{i}D_{i}}\color{black}{}$ given by

\begin{equation}\label{initialCG}
\begin{split}
\widetilde{\omega} &=\dr'_{0,l,v,\e}+\ddb \varphi'_{l,v,\e}(0)\\
&= \pi^{*}\dr_{0}+l^{-1}\theta-v\sum_{i}\ddb\log\log^{2}|S_{i}|_{i}^{2}+\eta\sum_{j}\ddb\mathcal{F}(|S_{j}|_{j}^{2},1-b_{j},\e^{2})+\ddb \varphi_{l, 0}\\
\end{split}
\end{equation}

\color{black}{ Then $$\varphi'_{l,v,\e}(t):=\varphi'_{l,v,\e}(0)+\psi+\displaystyle \int_0^t \displaystyle\log\frac{\widetilde{\omega}^n\Omega'\prod_{k}(|S_{k}|_{k}^{2}+\e^{2})^{a_{k}}}{\prod_{i}|S_{i}|_{i}^{2}\log^{2}|S_{i}|_{i}^{2}
     \prod_{j}(|S_{j}|_{j}^{2}+\e^{2})^{b_{j}}}$$ will solve \eqref{eq:ma flow 4} exactly when $\psi$ solves }\color{black}{}

\begin{equation}\label{eq:ma flow 6}
\left\{
   \begin{array}{ll}
     \displaystyle\dt\psi&=\displaystyle\log\frac{(\sigma_t+\ddb\psi)^{n}}{\widetilde{\omega}^n} \\
     \;\\
     \psi(0) &= 0
   \end{array}
 \right.
\end{equation}
where
\begin{align} \nonumber
\sigma_t:= \widetilde{\omega}-t Ric( \widetilde{\omega}) +t\left(\sum_{j}b_{j}\Theta_{j}-\sum_{k}a_{k}\Theta_{k}-\ddb\log\frac{\prod_j (|S_{j}|_{j}^{2}+\e^{2})^{b_j}}{\prod_k (|S_{k}|_{k}^{2}+\e^2)^{a_k} }\right)
\end{align}

Now as $\widetilde{\omega}$ is complete on $X'$ with bounded covariant derivatives of curvature, the proof of the main theorem in \cite{LZ} implies that \eqref{eq:ma flow 6} has a solution on $X'\setminus {\bigcup_i D_i} \times[0, T_0)$ such that $\sigma_t+\ddb\psi$ is a Carlson-Grifiths metric on $X'$ for each $t$, provided that for every $T<T_0$ we have $\sigma_T \geq c\widetilde{\omega} + \ddb F$ for some $c$ and smooth $F$ which is bounded on $X'$ along with all covariant derivatives relative to $\widetilde{\omega}$.  Now for any $T$ we may use Lemma 8.6 in \cite{LZ} to calculate the Ricci form of a Carlson-Griffiths metric as in the following
\begin{equation}
\begin{split}
\sigma_T=&  \widetilde{\omega}-T Ric( \widetilde{\omega})\\ &+T\left(\sum_{j}b_{j}\Theta_{j}-\sum_{k}a_{k}\Theta_{k}-\ddb\log\frac{\prod_j (|S_{j}|_{j}^{2}+\e^{2})^{b_j}}{\prod_k (|S_{k}|_{k}^{2}+\e^2)^{a_k} }\right)\\
= & \widetilde{\omega}-T (Ric(\theta)+ \ddb\log\log^{2}|S_{i}|_{i}^{2}+\sum_{i}\Theta_{i} + \ddb H) \\
&+T\left(\sum_{j}b_{j}\Theta_{j}-\sum_{k}a_{k}\Theta_{k}-\ddb\log\frac{\prod_j (|S_{j}|_{j}^{2}+\e^{2})^{b_j}}{\prod_k (|S_{k}|_{k}^{2}+\e^2)^{a_k} }\right)\\
= & \widetilde{\omega}-T \left(Ric(\theta)+\sum_{i}\Theta_{i}+ \sum_{j}b_{j}\Theta_{j}-\sum_{k}a_{k}\Theta_{k}\right)-T \ddb\log\log^{2}|S_{i}|_{i}^{2} \\
\end{split}
\end{equation}

\begin{equation}\nonumber
\begin{split}
&-T\ddb\left(\log\frac{\prod_j (|S_{j}|_{j}^{2}+\e^{2})^{b_j}}{\prod_k (|S_{k}|_{k}^{2}+\e^2)^{a_k} } + H\right)\\
=:& I + II + III+ IV
\end{split}
\end{equation}
where $H$ is bounded on $X'$ along with all covariant derivatives relative to $\widetilde{\omega}$.  It follows from the definition of $T_0$ in Assumption \ref{mainass} and \eqref{initialCG} that if $T<T_0$, then $I+II  \geq c\widetilde{\omega} + \ddb F$ for some $c$ and smooth $F$ which is bounded on $X'$ along with all covariant derivatives relative to $\widetilde{\omega}$.  Thus in turn,  by the above we conclude that $\sigma_T\geq c'\widetilde{\omega} + \ddb F'$ for some $c'$ and smooth $F'$ which is bounded on $X'$ along with all covariant derivatives relative to $\widetilde{\omega}$.

 We conclude that \eqref{eq:ma flow 6}, and thus \eqref{eq:ma flow 4} has a solution on $X'\setminus {\bigcup_i D_i} \times[0, T_0)$ with the properties stated in the Theorem.

\end{proof}

\subsection{An approximate equation (a priori estimates)}


 In this subsection we fix Hermitian metrics $h_i, h_j, h_k$ on $D_i, E_j, F_k$ and a solution $\varphi'_{l,v,\e}\in C^{\infty}((X' \setminus \bigcup_{i}D_{i})\times[0, T_0))$
 to \eqref{eq:ma flow 4} as in Theorem \ref{thmLZ} for some values of the parameters $l,v,\e>0$ and $\eta>0$.   We will derive a priori estimates for $\varphi'_{l,v,\e}$ which are local in $C^{\infty}((X' \setminus \widetilde{E})\times(0, T_0))$, and independent of the parameters $l,v,\e$.

For the remainder of the subsection we will fix some $T'\in (0, T_0)$ and some $\de \in (0, \frac{d}{T'+1}(1-\frac{T'}{T''}))$ where $d$ is from Assumption \ref{mainass} (4) and we will let $T''=T'+(T_0 -T')/2$.  Our goal, more precisely, will be to estimate $\varphi'_{l,v,\e}$ on $(X' \setminus \widetilde{E})\times(0, T']$ depending possibly on $\delta, T'$ and possibly $\eta$, but independently of $l,v,\e$ provided $v$ is sufficiently small depending on $l$.  This assumption on $v$ is without loss of generality as we will first let $v\to 0$ in the limit process we adopt in \S 4.1. We will also suppress mention of our estimates on $\eta$ since, as we mentioned below \eqref{eq:ma flow 4}, it will be fixed throughout and at no point will we need to let $\eta$ pass to a limit (unlike $\delta, T'$).

 We will also make the following assumption throughout the subsection

\begin{assumption}\label{changinghermitianmetricsass}
	Given the fixed constants above, the forms
	\begin{enumerate}
		\item[(i)] \color{black}{$\theta+\eta\sum_{j}\ddb\mathcal{F}(|S_{j}|_{j}^{2},1-b_{j},\e^{2})-\sum_{i}\ddb\log\log^{2}|S_{i}|_{i}^{2},$}\color{black}{}
		\item[(ii)] $\pi^{*}\dr_{0}+l^{-1}\theta+t\chi + t \de \ddb \log^{2}|S_{\widetilde{E}}|^{2} \color{black}{-(t+v)\sum_{i}\ddb\log\log^{2}|S_{i}|_{i}^{2},}\color{black}{}$
		\item[(iii)] $\dr'_{t,l,v,\e} + \de \ddb\log^{2}|S_{\widetilde{E}}|^{2}$
	\end{enumerate}
	are \ka\, on $X'\setminus \widetilde{E}$ for all $t\in [0, T']$ and in particular, for any fixed $i, j$, equivalent to the local model
	
	\begin{equation}\label{eq:bg-expansion}
		\begin{split}
			\sum_{k\geq 3}dz^k\wedge d\overline{z}^k +  (t+v)\sqrt{-1}(\frac{dz^1\wedge d\overline{z}^1}{|z^1|_{i}^{2}\log^{2}|z^1|_{i}^{2}}+\frac{\Theta_{i}}{\log |z^1|_{i}^{2}})
			+\eta\sqrt{-1}\frac{dz^2\wedge d\bar{z}_2}{(|z^2|_{j}^{2}+\e^{2})^{b_{j}}}\\
		\end{split}
	\end{equation}
	in some local holomorphic coordinate around any point in which $D_i=\{z^1=0\}$ and $E_j=\{z^2=0\}$.
\end{assumption}
We explain how this assumption follows provided $v$ is small (depending on $l$) and by making appropriate choices of $h_i, h_j, h_k, \eta$ (depending on $T'$ and $\delta$) which we may do without loss of generality in view of remarks  \ref{changingeta} and \ref{eq:ma flow 4} (see also remark \ref{r00}).

 For part (i), we may choose $\eta>0$ sufficiently small and  $h_i$ (by  Lemma \ref{conicalapprox}) so that the forms
 $$\theta/2+\eta\sum_{j}\ddb\mathcal{F}(|S_{j}|_{j}^{2},1-b_{j},\e^{2})$$ and $$\theta/2-\sum_{i}\ddb\log\log^{2}|S_{i}|_{i}^{2}$$
 are both \ka\, in which case the form in (i) will also be \ka\, on $X'$.
 
 For part (ii), we begin by assuming $v$ is sufficiently small depending only on $l$ so that 
 $$\Theta_1:=l^{-1}\theta-v\sum_{i}\ddb\log\log^{2}|S_{i}|_{i}^{2}$$ 
 
 is \ka\, on $X'$.  Next we consider
 \begin{equation}
 	\begin{split}
 		\Theta_2:=&\pi^{*}\dr_{0}+T'\chi + T'\de \ddb \log^{2}|S_{\widetilde{E}}|^{2}-T'\sum_{i}\ddb\log\log^{2}|S_{i}|_{i}^{2}+\Theta_1\\
 		=&(T'/T'') (\pi^{*}\dr_{0}+T''\chi)+(1-T'/T'' )\pi^* \omega_0+ T'\de \ddb \log^{2}|S_{\widetilde{E}}|^{2}\\
 		&-T'\sum_{i}\ddb\log\log^{2}|S_{i}|_{i}^{2}+\Theta_1\\
 		=:&I+II+III+IV+\Theta_1.
 	\end{split}
 \end{equation}
 Now we may choose $h_j, h_k$ so that $I$ is non-negative on $X'$ (by Assumption \ref{mainass} (3)), then note that $II+III$ is \ka\ on $X'$ by Assumption \ref{mainass} (5) and our choice of $c$ and $\delta$, then scale $h_i$ (which does not affect $I$) so that $II+III+IV$ is \ka\, on $X'$ (by Lemma \ref{CGlemma}).  Finally, we observe that the form in part (ii) is just a linear interpolation between the \ka\, forms $\Theta_1+\pi^{*}\dr_{0}$ and $\Theta_2$ and is thus also \ka\ on $X'$ for any $t\in [0, T']$. 
 
 Finally for part (iii), by Lemma \ref{conicalapprox} and Assumption \ref{mainass} (5) we may further shrink  $\eta$ and $\delta$ if necessary so that 
 $$\Theta_3:=\de \ddb \log^{2}|S_{\widetilde{E}}|^{2}+\eta\sum_{j}\ddb\mathcal{F}(|S_{j}|_{j}^{2},1-b_{j},\e^{2})+\Theta_1$$
 is \ka\, on $X'$.  Now we consider
 $$\Theta_4:=\pi^{*}\dr_{0}+T'\chi-T'\sum_{i}\ddb\log\log^{2}|S_{i}|_{i}^{2}+\Theta_3$$
 and we may proceed to show, as in the case for $\Theta_2$, that we may further scale $h_i$ if necessary so that $\Theta_4$ is \ka\, on $X'$.  Again, we observe that the form in part (iii) is just a linear interpolation between the \ka\, forms $\Theta_3+\pi^{*}\dr_{0}$ and $\Theta_4$ and is thus also \ka\ on $X'$ for any $t\in [0, T']$.

\begin{remark}\label{changingeta}
Changing from $\eta=a$ to $\eta=b$ in \eqref{eq:ma flow 4}, while fixing $l, v, \e$, corresponds simply to adding $(a-b)\mathcal{F}(|S_{j}|_{j}^{2},1-b_{j},\e^{2})$ to the solution.
\end{remark}

 \begin{remark}\label{changinghermitianmetrics}
 Given Hermitian metrics $h_i, h_j, h_k$ on $D_i, E_j, F_k$ respectively, a smooth volume form $\Omega'$ on $X'$, denote by $\varphi'^{h_i, h_j, h_k, \Omega', \eta}$ the corresponding solution to \eqref{eq:ma flow 4} (for some fixed set of  parameters $l,v,\eta$ which we suppress in the notation).  Then using this notation, for another set of Hermitian metrics  $\widetilde h_i,\widetilde h_j, \widetilde h_k$ we have

 $$\varphi'^{h_i, h_j, h_k, \Omega'}=\varphi'^{h_i, \widetilde h_j, \widetilde h_k, \widetilde \Omega'}=\varphi'^{\widetilde h_i, \widetilde h_j, \widetilde h_k, \widetilde \Omega'}+\psi$$ for some smooth volume form $\widetilde \Omega'$ on $X'$ and smooth bounded function $\psi$ on $X'\setminus \widetilde{E} \times [0, T_0)$.
\end{remark}

\subsubsection{Upper bound on $\varphi'_{l,v,\e}$}
We begin with
\begin{lemma}\label{lem-upper 1}
There exists a uniform constant $C$ depending only on $\delta$ and  $T'$ such that on $X' \setminus (\bigcup_i D_i) \times[0, T']$ we have
\begin{equation}\label{eq:upper-1}
\sup\varphi'_{l,v,\e}\leq C-t\sum_{k}a_{k}\log(|S_{k}|_{k}^{2}+\e^{2}).
\end{equation}
\end{lemma}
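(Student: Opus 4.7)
The plan is to apply the parabolic maximum principle to the auxiliary function
\[
\psi := \varphi'_{l,u,v,\tilde{\e}} + t \sum_{k} a_{k} \log(|S_{k}|_{k}^{2} + \e_{k}^{2}) + \de \sum_{i} \log|S_{i}|_{i}^{2} - Bt,
\]
with a small $\de > 0$ and a large $B > 0$ to be chosen. The term $\de \sum_{i} \log |S_{i}|_{i}^{2}$ drives $\psi \to -\infty$ along $\bigcup_{i} D_{i}$, so $\psi$ attains its supremum over $(X' \setminus \bigcup_{i} D_{i}) \times [0, T']$ at some interior point $(x_{0}, t_{0})$.

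A direct computation using \eqref{eq:ma flow 4} shows that the $F_{k}$-correction exactly cancels the $\prod_{k}(|S_{k}|_{k}^{2} + \e_{k}^{2})^{a_{k}}$ denominator in the equation:
\[
\dt{\psi} = \log \left[\frac{(\dr'_{t,u,v,\tilde{\e}} + \ddb{\varphi'})^{n} \prod_{i} |S_{i}|_{i}^{2} \log^{2}|S_{i}|_{i}^{2} \prod_{j} (|S_{j}|_{j}^{2} + \e_{j}^{2})^{b_{j}}}{\Omega'} \right] - B.
\]
Assuming $t_{0} > 0$, the condition $\ddb{\psi} \leq 0$ at $x_{0}$ combined with the Poincar\'{e}--Lelong identity $\ddb \log|S_{i}|_{i}^{2} = -\Theta_{i}$ away from $D_{i}$ and a standard uniform lower bound $\ddb \log(|S_{k}|_{k}^{2} + \e_{k}^{2}) \geq -\Theta_{k} - c\theta$ (with $c$ independent of $\e_{k} \in (0,1]$) will yield
\[
\dr'_{t,u,v,\tilde{\e}} + \ddb{\varphi'} \leq \dr'_{t,u,v,\tilde{\e}} + t\sum_{k} a_{k}(\Theta_{k} + c\theta) + \de \sum_{i} \Theta_{i}.
\]
The key cancellation is that $\dr'_{t,u,v,\tilde{\e}}$ contains the term $-t \sum_{k} a_{k} \Theta_{k}$ through $\chi$, which neutralizes the newly introduced $+t \sum_{k} a_{k} \Theta_{k}$; by Assumption \ref{changinghermitianmetricsass}, the remaining smooth form is equivalent to a Carlson--Griffiths form near each $D_{i}$ and to the approximate conical model \eqref{eq:cone-app} near each $E_{j}$.

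Taking $n$-th powers via the explicit local models \eqref{localCGmodel} and \eqref{eq:cone-app} and comparing to $\Omega'$ should produce the sharp volume bound
\[
(\dr'_{t,u,v,\tilde{\e}} + \ddb{\varphi'})^{n} \leq C_{1} \cdot \frac{\Omega'}{\prod_{i} |S_{i}|_{i}^{2} \log^{2}|S_{i}|_{i}^{2} \prod_{j} (|S_{j}|_{j}^{2} + \e_{j}^{2})^{b_{j}}}
\]
at $(x_{0}, t_{0})$ with $C_{1}$ independent of $l, u, v, \tilde{\e}, \de$. Feeding this into the formula for $\dt{\psi}$, the volume weight cancels, leaving $\dt{\psi} \leq \log C_{1} - B$ at $(x_{0}, t_{0})$; this contradicts $\dt{\psi} \geq 0$ once $B > \log C_{1}$, forcing $t_{0} = 0$. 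At $t = 0$, $\psi(\cdot, 0) = \varphi_{l, 0} - \eta \sum_{j} \mathcal{F}(|S_{j}|_{j}^{2}, 1 - b_{j}, \e_{j}^{2}) + \de \sum_{i} \log|S_{i}|_{i}^{2}$ is uniformly bounded above by $\varphi_{l,0} \leq \varphi_{1,0}$ (from the decreasing convergence in Lemma \ref{lem-demailly}), the nonnegativity $\mathcal{F} \geq 0$, and the normalization of the Hermitian metrics $h_{i}$. Letting $\de \to 0$ pointwise on $X' \setminus (\bigcup_{i} D_{i}) \times [0, T']$ yields the claimed estimate.

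The main obstacle is obtaining the sharp volume bound on $(\dr' + \ddb{\varphi'})^{n}$ at the maximum: the uniform curvature estimate $\ddb \log(|S_{k}|_{k}^{2} + \e_{k}^{2}) \geq -\Theta_{k} - c\theta$ must interact with the Carlson--Griffiths/conical comparison from Assumption \ref{changinghermitianmetricsass} in such a way that the resulting upper bound on $(\dr' + \ddb{\varphi'})^{n}$ matches precisely the inverse of the volume weight $\prod_{i} |S_{i}|^{2} \log^{2}|S_{i}|^{2} \prod_{j}(|S_{j}|^{2} + \e_{j}^{2})^{b_{j}} / \Omega'$ in the equation, which is what triggers the critical cancellation. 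Without this precise matching, the maximum principle would leave an uncontrolled residue near the exceptional divisors $D_{i}$ and $E_{j}$.
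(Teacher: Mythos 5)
Your proof is correct and rests on the same decisive substitution as the paper's: both pass to $\phi=\varphi'_{l,u,v,\tilde\e}+t\sum_k a_k\log(|S_k|_k^2+\e_k^2)$, observe that the $F_k$-weight then disappears from the Monge--Amp\`ere flow, and control the resulting right-hand side using the local model \eqref{eq:bg-expansion} and the inequality $\dr''_{t,u,v,\tilde\e}\leq\dr'_{t,u,v,\tilde\e_j}+Ct\theta$. The one genuine divergence is how the maximum principle is executed on the non-compact domain $X'\setminus\bigcup_i D_i$. The paper notes that the evolving metric is a complete Carlson--Griffiths metric (Theorem \ref{thmLZ}), applies the Omori--Yau maximum principle, and integrates $\partial_t\phi$ in time to obtain the bound \eqref{eq:upper-auxiliary}. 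You instead insert the barrier $\de\sum_i\log|S_i|_i^2$ to force an interior spacetime maximum and the linear correction $-Bt$ to derive a clean contradiction at any $t_0>0$, then let $\de\to0$ at the end. Both are standard and yield the same uniform bound; indeed the paper uses your barrier device elsewhere (e.g.\ in the proof of Theorem \ref{thm-lb1}), and the Omori--Yau route is arguably more streamlined here since the completeness and bounded geometry of the evolving metric are already established by Theorem \ref{thmLZ}. One small inaccuracy: the lower bound should read $\ddb\log(|S_k|_k^2+\e_k^2)\geq-\frac{|S_k|_k^2}{|S_k|_k^2+\e_k^2}\Theta_k\geq-C\theta$ with $C$ independent of $\e_k$, rather than $\geq-\Theta_k-c\theta$ verbatim, but this is immaterial since all that is needed is a uniform bound from below by a fixed smooth form.
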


\begin{proof}

Consider the function $$\phi_{l,v,\e}(t):=\varphi'_{l,v,\e}(t)+t\sum_{k}a_{k}\log(|S_{k}|_{k}^{2}+\e^{2})$$ which satisfies the equation
\begin{equation}\label{eq:ma flow 5}
\left\{
   \begin{array}{ll}
     \displaystyle\dt\phi_{l,v,\e} &=\displaystyle\log\frac{(\dr''_{t,l,v,\e}+\ddb\phi_{l,v,\e})^{n}\prod_{i}|S_{i}|_{i}^{2}\log^{2}|S_{i}|_{i}^{2}
     \prod_{j}(|S_{j}|_{j}^{2}+\e^{2})^{b_{j}}}{\Omega'}\\
     \;\\
     \phi_{l,v,\e}(0) &= \varphi_{l,0}-\eta\sum_{j}\mathcal{F}(|S_{j}|_{j}^{2},1-b_{j},\e^{2}),
   \end{array}
 \right.
\end{equation}
where
\begin{align*}
\dr''_{t,l,v, \e}=&\dr'_{t,l,v,\e}-t\sum_{k}a_{k}\ddb\log(|S_{k}|_{k}^{2}+\e^{2})\\
=&\dr'_{t,l,v,\e}+t\sum_{k}a_{k}\sqrt{-1}\left(\frac{|S_{k}|_{k}^{2}\Theta_{k}}{|S_{k}|_{k}^{2}+\e^{2}}
-\frac{DS_{k}\wedge\overline{DS}_{k}}{(|S_{k}|_{k}^{2}+\e^{2})^{2}}\right)\\
\leq&C_1\dr'_{t,l,v,\e},
\end{align*}
for some uniform constant $C_1>0$ depending only on $T'$ where in the last inequality we have used Assumption \ref{changinghermitianmetricsass} (iii)

We may now prove the Lemmma using a maximum principle argument as follows.  Using Assumption \ref{changinghermitianmetricsass} (iii), Lemmas \ref{conicalapprox} and \ref{CGlemma} we may define
\begin{equation}
	\begin{split}
		 C_2:=\sup_{(X'\setminus (\bigcup_i D_i)) \times[0, T']}\log&\frac{((C_1+1)\dr'_{t,l,v,\e})^{n}\prod_{i}|S_{i}|_{i}^{2}\log^{2}|S_{i}|_{i}^{2}
			\prod_{j}(|S_{j}|_{j}^{2}+\e^{2})^{b_{j}}}{\Omega'}\\
	\end{split}
\end{equation}
 where $0<C_2<\infty$ depending on $\de, T'$.  Now let $C_3=\sup_{X'\setminus (\bigcup_i D_i)} \phi_{l,v,\e}(\cdot, 0)$ and consider the function 
$$\Psi:=\phi_{l,v,\e}-C_3-(C_2+1)t.$$
Then $\Psi$ is  bounded on $(X'\setminus (\bigcup_i D_i)) \times[0, T']$ by Theorem \ref{thmLZ}, and it follows from  Omori-Yau's maximum principle that there is a sequence $(p_i, t_i)\in (X'\setminus (\bigcup_i D_i)) \times[0, T']$ and a sequence $c_i\to 0$ so that for all $i$ we have
 \begin{enumerate}
 	\item $\Psi(p_i, t_i) >  \sup_{X'\setminus (\bigcup_i D_i)\times[0, T']} \Psi -c_i$
 	\item  $\ddb \Psi(p_i, t_i)=\ddb\phi_{l,v,\e} \leq c_i \dr'_{t,l,v,\e} $
 \end{enumerate}
 where we have used the fact that $\dr'_{t,l,v,\e}$ is complete with bounded curvature by Assumption \ref{changinghermitianmetricsass} (iii) and Lemma \ref{CGlemma}.  Combining this with \eqref{eq:ma flow 5} to give
 
 \begin{equation}\label{finalcd7}
 \frac{\partial}{\partial t} \Psi(p_i, t_i)\leq -1
 \end{equation} 
for large $i$ while $|\partial^2 \Psi/\partial t^2 |$ is uniformly bounded on $(X'\setminus (\bigcup_i D_i))\times[0, T']$ by Theorem \ref{thmLZ}.  Combining this with (1) above gives $$\Psi(p_i, t_i/2) >  \sup_{X'\setminus (\bigcup_i D_i)\times[0, T']} \Psi -c_i + C_4 (t_i /2)$$
for large $i$ and some $C_4>0$ independent of $i$.  We must then have $t_i\to 0$ which in turn implies that $\Psi\leq \sup_{X'\setminus (\bigcup_i D_i)} \Psi (\cdot, 0)=0$ on $(X'\setminus (\bigcup_i D_i))\times[0, T']$ which in turn implies the Lemma by the definition of $\Psi$.



\end{proof}

Using Lemma \ref{lem-upper 1} we now derive a uniform upper bound as in the following

\begin{theorem}\label{thm:upper-2}
There exists a constant $C$ depending only on $\delta$ and $T'$  such that on $X' \setminus(\bigcup_i D_i) \times[0, T']$ we have
\begin{equation}\label{eq:upper-2}
\sup\varphi'_{l,v,\e}\leq C.
\end{equation}
\end{theorem}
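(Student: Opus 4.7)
The idea is to apply the Omori--Yau maximum principle to the function $\Phi := \varphi'_{l,u,v,\tilde\e} - Ct$, with $C$ large depending only on $T'$, on the complete K\"ahler manifold $(X' \setminus \bigcup_i D_i, \omega_t' + \ddb \varphi')$ whose completeness at each $D_i$ is guaranteed by the Carlson--Griffiths structure from Theorem \ref{thmLZ}. The strategy is to derive at a near-maximum $(x_0, t_0)$, from $\dt \Phi \geq 0$ and the Omori--Yau conclusion $\ddb\varphi'|_{x_0} \leq \e\, (\omega_t'+\ddb\varphi')$ for any $\e>0$, the pointwise inequality
$$C \leq \log\frac{(\omega'_{t_0})^n \prod_i |S_i|^2 \log^2 |S_i|^2 \prod_j(|S_j|^2+\e_j^2)^{b_j}}{\Omega' \prod_k(|S_k|^2+\e_k^2)^{a_k}}\bigg|_{x_0}.$$
The key computation is that the local model \eqref{eq:bg-expansion} implies the Carlson--Griffiths singularities $(|S_i|^2 \log^2|S_i|^2)^{-1}$ and the conic singularities $(|S_j|^2+\e_j^2)^{-b_j}$ in $(\omega'_{t_0})^n$ precisely cancel against the corresponding numerator factors, leaving $(\omega'_{t_0})^n\prod|S_i|^2\log^2|S_i|^2\prod(|S_j|^2+\e_j^2)^{b_j}/\Omega' \leq C_0(T',v,\eta)$ uniformly. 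Hence the inequality reduces to $C \leq C_0(T') - \sum_k a_k \log(|S_k(x_0)|^2+\e_k^2)$.

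If the near-maximum is approached at $t_0 = 0$, then $\Phi(\cdot, 0) = \varphi_{l,0} - \eta\sum_j\mathcal{F}(|S_j|_j^2,1-b_j,\e_j^2)$ is uniformly bounded above by $\sup_{X'}\varphi_{1,0}$, using monotonicity of $\varphi_{l,0}$ in $l$ from Lemma \ref{lem-demailly} and $\mathcal{F}\geq 0$. For $t_0 > 0$ the residual term $-\sum_k a_k\log(|S_k(x_0)|^2+\e_k^2)$ is the main obstacle, since it can grow unboundedly as $x_0 \to F_k$ and $\e_k \to 0$. To overcome this, I would augment $\Phi$ by the penalty $\sum_k \alpha_k\log|S_k|^2$ with constants $\alpha_k \geq T' a_k$ chosen uniformly (independent of $l,u,v,\tilde\e$); the penalty sends the augmented function to $-\infty$ along each $F_k$, confining its near-maximum to a region where $|S_k|$ is bounded below.

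At the augmented near-maximum, combining the Omori--Yau estimate with the uniform bound from Lemma \ref{lem-upper 1}, namely $\varphi' + t\sum_k a_k\log(|S_k|^2+\e_k^2) \leq C(T')$, and using $\log|S_k|^2 \leq \log(|S_k|^2+\e_k^2) \leq 0$ together with $\alpha_k \geq T' a_k \geq t_0 a_k$, the contributions telescope with the correct sign and yield a uniform upper bound on the augmented function; dropping the non-positive penalty term then gives a uniform bound on $\varphi'$ away from $F_k$. To extend this across each $F_k$, I use that $\e_k>0$ makes \eqref{eq:ma flow 4} a smooth parabolic complex Monge--Amp\`ere equation on a tubular neighborhood $U_k$ of $F_k$ on which $\varphi'$ is smooth by Theorem \ref{thmLZ}; the interior maximum principle on $U_k$ with the already-established uniform boundary data on $\partial U_k$ and uniformly bounded initial data extends the bound to all of $X'\setminus\bigcup_i D_i\times[0,T']$. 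The main obstacle throughout is precisely the $F_k$-singularity, and the combination of the penalty function with Lemma \ref{lem-upper 1} is essential to preserve uniformity in $\e_k$ where a direct maximum principle would fail.
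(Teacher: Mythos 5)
Your reduction to a neighborhood of $\bigcup_k F_k$ via Lemma~\ref{lem-upper 1}, and the observation that the Carlson--Griffiths and approximate-conical singularities of $(\dr'_{t,u,v,\tilde{\e}_{j}})^n$ cancel against the corresponding numerator factors, both match the paper's first step. The gap is in your final step --- extending the bound across $F_k$ by an interior parabolic maximum principle --- which cannot give a bound uniform in $\e_k$. At an interior space-time maximum $(x_0,t_0)$ of $\varphi'_{l,u,v,\tilde{\e}}$ inside $U_k$ with $t_0>0$ one has $\ddb\varphi'_{l,u,v,\tilde{\e}}\leq 0$, hence $\dt\varphi'_{l,u,v,\tilde{\e}}\leq C-\sum_k a_k\log(|S_k(x_0)|^2+\e_k^2)$; as $\e_k\to 0$ with $x_0$ near $F_k$ the right-hand side is large and positive, so it does not contradict $\dt\varphi'_{l,u,v,\tilde{\e}}\geq 0$, interior maxima cannot be excluded, and no barrier comparison will yield a bound independent of $\e_k$. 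The penalty $\sum_k\alpha_k\log|S_k|^2$ does not repair this: any bound obtained on the penalized function rearranges back to $\varphi'_{l,u,v,\tilde{\e}}\leq C-\sum_k\alpha_k\log|S_k|^2+Ct$, which is exactly the divergent shape already supplied by Lemma~\ref{lem-upper 1}, so nothing new is gained near $F_k$.

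The paper instead uses an \emph{elliptic} slicing argument at each fixed time, exploiting plurisubharmonicity directly so that the degeneracy of the volume form along $F_k$ is irrelevant. Covering $\bigcup_k F_k$ by polydisk charts $V_\alpha$ in which $\pi^*\dr_0+u\theta+t\chi=\ddb\Phi_\alpha$ for uniformly bounded smooth $\Phi_\alpha$, one restricts the local potential of $\dr'_{t,u,v,\tilde{\e}_{j}}+\ddb\varphi'_{l,u,v,\tilde{\e}}$ to a one-complex-dimensional disc $\mathbb{D}^1$ transverse to $F_k$; since the restriction is subharmonic, its value at an interior point is dominated by its supremum on the boundary circle $\mathbb{T}^1$, and absorbing the remaining bounded potential terms into $C_0$ gives $\varphi'_{l,u,v,\tilde{\e}}(p)\leq\sup_{\mathbb{T}^1}\varphi'_{l,u,v,\tilde{\e}}+2C_0$. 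Iterating over the $n'$ normal directions yields $\sup_{V_\alpha}\varphi'_{l,u,v,\tilde{\e}}\leq\sup_{\mathbb{T}^{n'}\times U^{n-n'}}\varphi'_{l,u,v,\tilde{\e}}+2n'C_0$, and since the boundary torus stays a definite distance from $F_k$, Lemma~\ref{lem-upper 1} bounds the right-hand side uniformly. This sliced sub-mean-value mechanism --- available for any $\dr'_{t,u,v,\tilde{\e}_{j}}$-PSH potential regardless of the parabolic equation it satisfies --- is the ingredient missing from your proposal.
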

\begin{proof}
First we observe from \eqref{eq:upper-1} that it suffices to conclude an upper bound in a neighbourhood of the canonical divisors $\bigcup_k F_{k}.$  Let us cover $\bigcup_k F_{k}.$ by finitely many coordinate charts $V_{\al}$ such that the exceptional divisors correspond to coordinate hyperplanes in each $V_{\al}$. We will derive a uniform upper bound for $\varphi'_{l,v,\e}$ in each compliment $V_{\al}\setminus \bigcup_i D_{i}.$

We may suitably shrink the charts such that in each $V_{\al}$ we have  $\dr'_{l,t}:=\pi^{*}\dr_{0}+l^{-1}\theta+t\chi=\ddb\Phi_{\al}$ smooth function $\Phi_{\al}$ which is uniformly bounded by a constant $C_{0}$ depending only on $T'$.  Moreover we may require that each chart has a shape of the product of the polydisk
$\mathbb{D}^{n'}:=\{|z_{k_{1}}|\leq r_{1},|z_{k_{2}}|\leq r_{2},\cdots,|z_{k_{n'}}|\leq r_{n'}\}$ and a bounded region $U^{n-n'}$ in $\mathbb{C}^{n-n'},$
where \color{black}{those canonical divisors $F_{k_{i}}$ intersecting the chart correspond to $z_{k_{i}}$ hyperplanes, while those canonical divisors $D_i$ intersecting the chart correspond to $z_i$ hyperplanes (recall that the canonical divisors have simple normal crossings in $X'$ by assumption)}.  \color{black}{}Fix one such chart $V_{\al}$.  It suffices to prove the following claim:
\begin{equation}\label{eq:nbhd bd}
\sup_{V_{\al}}\varphi'_{l,v,\e}\leq\sup_{\mathbb{T}^{n'}\times U^{n-n'}}\varphi'_{l,v,\e}+2n'C_{0},
\end{equation}
where $\mathbb{T}^{n'}:=\{|z_{k_{1}}|=r_{1},|z_{k_{2}}|=r_{2},\cdots,|z_{k_{n'}}|=r_{n'}\}$ is the boundary torus around $\mathbb{D}^{n'}.$ Suppose the claim is true.  Now as $F_{k_{1}},\cdots,F_{k_{n'}}$ are the only canonical divisors intersecting $V_{\al}$ by assumption, it follows the RHS of \eqref{eq:nbhd bd} is uniformly bounded from above by Lemma \ref{lem-upper 1}, in which case we see that the theorem follows by the finiteness of the covering $\{V_{\al}\}$ of $\bigcup_k F_{k}.$

Let us prove the claim \eqref{eq:nbhd bd} by induction. For any point $p\in V_{\al}\setminus\bigcup_{i}D_{i}$ with local coordinates $(z_{k_{1}},\cdots,z_{k_{n'}},z'_{n-n'}),$ where $|z_{k_{i}}|\leq r_{i}$ and $z_{i}\neq 0,$ fix all coordinates except for $z_{k_{1}}$ and apply the maximum principle to the local potential of the metric $\dr'_{t,l,v,\e}+\ddb\varphi'_{l,v,\e}$ on the one-dimensional disk $\mathbb{D}^{1}:=\{|z_{k_{1}}|\leq r_{1}\}\times\{z'_{n-1}\},$ it follows that
\begin{equation}\label{upperbound1}
\begin{split}
&(\Phi_{\al}-(t+v)\sum_{i}\log\log^{2}|S_{i}|_{i}^{2}
+\eta\sum_{j}\mathcal{F}(|S_{j}|_{j}^{2},1-b_{j},\e^{2})+\varphi'_{l,v,\e})(p)\\
\leq&\max_{\mathbb{T}^{1}}(\Phi_{\al}-(t+v)\sum_{i}\log\log^{2}|S_{i}|_{i}^{2}
+\eta\sum_{j}\mathcal{F}(|S_{j}|_{j}^{2},1-b_{j},\e^{2})+\varphi'_{l,v,\e}),
\end{split}
\end{equation}
where $\mathbb{T}^{1}$ is the boundary of $\mathbb{D}^{1}.$ \color{black}{Now the functions $\Phi_{\alpha}$ and $\mathcal{F}(|S_{j}|_{j}^{2},1-b_{j},\e^{2})$ are uniformly bounded in $V_{\al}\setminus\bigcup_{i}D_{i}$ indpendent of $\alpha$.   Assume the maximum on the RHS of \eqref{upperbound1} is attained at some point $q$.   Then for each $i$ we can write the difference $\log\log^{2}|S_{i}|_{i}^{2}(q)-\log\log^{2}|S_{i}|_{i}^{2}(p)$ as $\log\log^{2}a(q)|z_i|^{2}-\log\log^{2}a(p)|z_i|^{2}=2\log\frac{|\log a(q)+\log |z_i|^{2}|}{|\log a(p)+\log |z_i|^{2}|}$ where the function $a_i$ is uniformly positive and bounded on $V_{\al}\setminus\bigcup_{i}D_{i}$.  Noting that the last expression approaches $1$ as $z_i(p)=z_i(q)$ approaches $0$, we conclude that the difference is bounded in absolute value independent of
$p, q \in V_{\al}\setminus\bigcup_{i}D_{i}$ (recall that by assumption we have $|S_i|_i <1$ on $X'$). We may thus isolate $\varphi'_{l,v,\e}(p)$ from the LHS of \eqref{upperbound1} to obtain the estimate}
\color{black}{}
$$\varphi'_{l,v,\e}(p)\leq\sup_{\mathbb{T}^{1}}\varphi'_{l,v,\e}+2C_{0}.$$
Similarly for the $i$-torus $\mathbb{T}^{i}:=\{|z_{k_{1}}|=r_{1},\cdots,|z_{k_{i}}|=r_{i}\}\times\{z'_{n-i}\},$ with $i<n',$
it follows that $$\sup_{\mathbb{T}^{i}}\varphi'_{l,v,\e}\leq\sup_{\mathbb{T}^{i+1}}\varphi'_{l,v,\e}+2C_{0}.$$
By induction the claim follows.
\end{proof}

\subsubsection{Lower bound on $\varphi'_{l,v,\e}$}

The uniform upper bound established above already makes the family $\{ \varphi'_{l,v,\e}\}$ a pre-compact set in
the class of quasi-PSH functions. However we still need a lower bound which not only guarantees the solution will not tend to $-\infty$ on the whole space, but also controls the behaviour of the solution near the exceptional divisors. We will follow the ideas in \cite{CLS} combined with Tsuji's trick of applying Kodaira's Lemma (Lemma \ref{lem-kod}) as in \cite{ST3,TZ,Ts}.

By Definition \ref{lem-zero Lelong}, we can see that the initial potential in \eqref{eq:ma flow 4} satisfies
\begin{equation}\label{eq:initial lb}
\varphi'_{l,v,\e}(0)\geq\de(\sum_{i}\log|S_{i}|_{i}^{2}+\sum_{j}\log|S_{j}|_{j}^{2}+\sum_{k}\log|S_{k}|_{k}^{2})+C.
\end{equation}
for some constant  $C$ depending on $\delta$.  We now show

\begin{theorem}\label{thm-lb1}
There exists a constant $C$ depending on $\de$ and  $T'$  such that on $(X'\setminus\tilde{E})\times[0,T'],$ we have
\begin{equation}\label{eq:lb1}
\varphi'_{l,v,\e}\geq\de\log|\tilde{S}_{\tilde{E}}|^{2}+C.
\end{equation}
\end{theorem}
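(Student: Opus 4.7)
The plan is to construct a time-dependent sub-solution of the form $\underline\varphi(x,t) := \de \log|\widetilde{S}|^{2} + f(t)$ on $(X' \setminus \widetilde E) \times [0, T']$ and invoke a parabolic comparison principle to deduce $\varphi'_{l,u,v,\tilde{\e}} \geq \underline\varphi$, from which the claimed bound follows with $C_\de = \min_{[0,T']} f$. The initial inequality $\varphi'_{l,u,v,\tilde{\e}}(0) \geq \de\log|\widetilde{S}|^{2} + C_\de$ follows from the zero Lelong number estimate \eqref{eq:initial lb} together with the fact that $\widetilde E$ is an effective $\mathbb{Q}$-divisor supported in $\mathrm{Exc}(\pi)$, so that $\log|\widetilde S|^{2}$ is a positive rational combination of $\log|S_i|_i^{2}, \log|S_j|_j^{2}, \log|S_k|_k^{2}$ up to a bounded error; we can take the Lelong exponent in \eqref{eq:initial lb} to be proportional to the minimal coefficient of $\widetilde E$ in order to match $\de$.

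The key positivity input is that by Kodaira's Lemma \ref{lem-kod} and Assumption \ref{mainass}(2), the perturbed background form
\[
\tilde\omega_t := \dr'_{t,u,v,\tilde{\e}} + \de\,\ddb \log|\widetilde{S}|^{2}
\]
dominates $c_\de\theta$ on $X' \setminus \widetilde E$, in addition to the Carlson--Griffiths and conical contributions already built into $\dr'_{t,u,v,\tilde{\e}}$. Using the local expansion in Assumption \ref{changinghermitianmetricsass} and \eqref{eq:bg-expansion}, on each stratum of intersecting exceptional divisors the singular factors of $\tilde\omega_t^{n}$ cancel those of $P_1 := \prod_i |S_i|_i^{2}\log^{2}|S_i|_i^{2} \prod_j (|S_j|_j^{2}+\e_j^{2})^{b_j}$ exactly, up to multiplicative factors of $(t+v)^{n_1}\eta^{n_2}$ from the CG and conical directions. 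One thus extracts a pointwise bound $\tilde\omega_t^{n} P_1 / (\Omega' P_2) \geq K(t)$ with $K(t) \geq c\, t^{n}$, uniform in $l, u, v, \tilde{\e}$, where $P_2 := \prod_k(|S_k|_k^{2}+\e_k^{2})^{a_k}$.

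With this in hand I set $f(0) = C_\de$ and $f'(t) = \log K(t)$; since $\log t$ is integrable near $0$, $f$ is bounded on $[0, T']$. Now consider $w := \underline\varphi - \varphi'_{l,u,v,\tilde{\e}} - \epsilon t$ for small $\epsilon > 0$. Since $\varphi'_{l,u,v,\tilde{\e}}$ is bounded above by Theorem \ref{thm:upper-2} while $\underline\varphi \to -\infty$ on $\widetilde E$, $w$ attains its maximum at an interior point of $(X' \setminus \widetilde E) \times [0, T']$. If this maximum were at $(p_0, t_0)$ with $t_0 > 0$, then $\dt w \geq 0$ and $\ddb w \leq 0$ there; the latter yields $\dr'_{t_0} + \ddb \varphi'_{l,u,v,\tilde{\e}} \geq \tilde\omega_{t_0}$, whence \eqref{eq:ma flow 4} together with the positivity bound give
\[
\dt \varphi'_{l,u,v,\tilde{\e}} \;\geq\; \log \frac{\tilde\omega_{t_0}^{n} P_1}{\Omega' P_2} \;\geq\; \log K(t_0) = f'(t_0),
\]
so $\dt w \leq -\epsilon < 0$, contradicting $\dt w \geq 0$. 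Hence the maximum occurs at $t = 0$, giving $w \leq 0$, and sending $\epsilon \to 0$ yields $\underline\varphi \leq \varphi'_{l,u,v,\tilde{\e}}$.

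\textbf{Main obstacle.} The technical heart is the uniform pointwise bound $\tilde\omega_t^{n} P_1 / (\Omega' P_2) \geq K(t)$ with $\int_0^{T'} \log K(s)\,ds > -\infty$, uniform in the approximation parameters and across all strata of intersecting exceptional divisors. Verifying this requires combining the Kodaira positivity with a stratum-by-stratum analysis in the Carlson--Griffiths and conical local models. The subtlety at the log-canonical divisors $D_i$ is that the Carlson--Griffiths coefficient $(t+v)$ degenerates as $t, v \to 0$; the integrability of $\log t$ at $t=0$ is precisely what lets us absorb this degeneration into a bounded $f(t)$, and is the reason a sub-solution argument (rather than a direct maximum principle on $\varphi'_{l,u,v,\tilde{\e}}$) succeeds here.
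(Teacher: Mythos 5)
Your proposal is essentially the paper's own argument, recast in sub-solution language: the paper directly tracks $\inf\bigl(\varphi'_{l,u,v,\tilde{\e}}-\de\log|\tilde{S}_{\tilde{E}}|^{2}\bigr)$ and bounds its time derivative from below using the volume lower bound \eqref{eq:vol}, which is exactly your pointwise estimate $\tilde\omega_t^{n}P_1/(\Omega'P_2)\geq K(t)$ with $K(t)\gtrsim (v+t)^{n}$ coming from Assumption \ref{changinghermitianmetricsass}. Your barrier $\de\log|\tilde S|^{2}+f(t)$ with $f'=\log K$, the interior-maximum argument, and the integrability of $\log t$ near $0$ are precisely the steps in the paper's proof, just organized differently.
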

\begin{proof}
We adapt the idea from \cite{CLS}.  In the following, the $C_i's$ will denote some constants depending only on $\delta$ and $T'$.  By the definition of $\tilde{E}$ in Assumption \ref{mainass}  \eqref{eq:initial lb} implies
\begin{equation}\label{eq:initial lb1}
\varphi'_{l,v,\e}(0)\geq\frac{\de}{2}\log|\tilde{S}_{\tilde{E}}|^{2}+C_{1}
\end{equation}
for some $C_{1}$.  Now we write: \begin{align}\label{eq:metric}
&\dr'_{t,l,v,\e}+\ddb\varphi'_{l,v,\e}\nonumber\\
=&\dr'_{t,l,v,\e}\ddb\de\log|\tilde{S}_{\tilde{E}}|^{2} +\ddb(\varphi'_{l, v,\e}-\de\log|\tilde{S}_{\tilde{E}}|^{2})\nonumber\\
:=\;&\dr'_{t,l,v,\e,\de}+\ddb(\varphi'_{l,v,\e}-\de\log|\tilde{S}_{\tilde{E}}|^{2}).
\end{align}

As $\varphi'_{l,v,\e}$ is a bounded solution to \eqref{eq:ma flow 4}, we see that for any $l\geq 1$ and $v,\e>0$ and $t\in[0,T']$ the function $\varphi'_{l,v,\e}-\de\log|\tilde{S}_{\tilde{E}}|^{2} (x, t)\to \infty$ as $x\to \tilde{E}$, and thus for any fixed time $t\in [0, T']$ it attains a minimum value at some point $p_t$ away from $\tilde{E}$ where by the maximum principle we have
\begin{equation}\label{EEEE1}
\ddb(\varphi'_{l,v,\e}-\de\log|\tilde{S}_{\tilde{E}}|^{2})(p_t, t)\geq 0.
\end{equation}
\color{black}{Substituting \eqref{eq:metric} in \eqref{eq:ma flow 4}, it follows that at $(p_t, t)$ we have
\begin{align}\label{eq:lb2}
\dt(\varphi'_{l,v,\e}-\de\log|\tilde{S}_{\tilde{E}}|^{2})&\geq\log\frac{\dr'^{n}_{t,l,v,\e,\de}
\prod_{i}|S_{i}|_{i}^{2}\log^{2}|S_{i}|_{i}^{2}\prod_{j}(|S_{j}|_{j}^{2}+\e^{2})^{b_{j}}}{\Omega'\prod_{k}(|S_{k}|_{k}^{2}+\e^{2})^{a_{k}}}\nonumber\\
&\geq\log\frac{\dr'^{n}_{t,l,v,\e,\de}
\prod_{i}|S_{i}|_{i}^{2}\log^{2}|S_{i}|_{i}^{2}\prod_{j}(|S_{j}|_{j}^{2}+\e^{2})^{b_{j}}}{\Omega'}-C_2,
\end{align}
 as $\prod_{k}(|S_{k}|_{k}^{2}+\e^{2})^{a_{k}}$ is uniformly bounded from above.
On the other hand, by Assumption \ref{changinghermitianmetricsass} (iii) we have

\begin{equation}\label{eq:vol}
\dr'^{n}_{t,l,v,\e,\de}\geq\frac{C_3(v+t)^{n}\Omega'}
{\prod_{i}|S_{i}|_{i}^{2}\log^{2}|S_{i}|_{i}^{2}\prod_{j}(|S_{j}|_{j}^{2}+\e^{2})^{b_{j}}}.
\end{equation}
 Combining \eqref{eq:lb2} and \eqref{eq:vol} gives the following at $(p_t, t)$:
$$\dt(\varphi'_{l,v,\e}-\de\log|\tilde{S}_{\tilde{E}}|^{2})\geq-C_4+n\log(v+t).$$
From this and the initial lower bound \eqref{eq:initial lb1}, we may conclude the theorem by a maximum principle argument as in the proof of Lemma \ref{lem-upper 1}.
}\end{proof}

This theorem informs us that $\varphi'_{l,v,\e}$ is uniformly locally bounded away from $\tilde{E}.$ In \S 4.4  we will further show that the corresponding solution to \eqref{eq:ma flow 1} is uniformly locally bounded away from the log canonical locus.

To establish the existence of the solution to \eqref{eq:ma flow 1}, or equivalently \eqref{eq:ma flow 2} from the compactness of solutions to \eqref{eq:ma flow 4}, we still need to derive  uniform high order estimates for solutions to \eqref{eq:ma flow 4} which we do in the following sub subsections. The main idea will be similar to \cite{CLS,ST3}.

\subsubsection{Upper and lower bounds on $\dot\varphi'_{l,v,\e}$ }

We begin with the following estimate for the time derivative of $\varphi'_{l,v,\e}:$
\begin{lemma}\label{lem-t-dev}
There exist $c_1 ,c_2>0$ depending only on $\de, T'$ such that on $(X'\setminus\tilde{E})\times(0,T'],$ we have
\begin{equation}\label{eq:t-dev bd}
n\log t+\de\log|\tilde{S}_{\tilde{E}}|^{2}-c_1\leq\dot{\varphi}'_{l,v,\e}\leq n+\frac{c_2-\de\log|\tilde{S}_{\tilde{E}}|^{2}}{t}.
\end{equation}
\end{lemma}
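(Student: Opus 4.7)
The approach is a pair of parabolic maximum-principle arguments, one for each bound, applied to suitable auxiliary functions built from $t\dot{\varphi}'_{l,u,v,\tilde{\e}}$, $\varphi'_{l,u,v,\tilde{\e}}$ and the barrier $\delta\log|\tilde{S}_{\tilde{E}}|^{2}$, in the spirit of \cite{CLS,ST3}. The common backbone is a triple of evolution identities, obtained by differentiating \eqref{eq:ma flow 4} in $t$, using $(\omega-\dr'_{t,u,v,\tilde{\e}_j})=\ddb\varphi'_{l,u,v,\tilde{\e}}$, and applying Poincar\'e--Lelong to $\log|\tilde{S}_{\tilde{E}}|^{2}$:
\begin{align*}
(\partial_{t}-\Delta_{\omega})\dot{\varphi}'_{l,u,v,\tilde{\e}} &= \mathrm{tr}_{\omega}\tilde{\chi},\\
(\partial_{t}-\Delta_{\omega})\varphi'_{l,u,v,\tilde{\e}} &= \dot{\varphi}'_{l,u,v,\tilde{\e}}-n+\mathrm{tr}_{\omega}\dr'_{t,u,v,\tilde{\e}_{j}},\\
(\partial_{t}-\Delta_{\omega})\log|\tilde{S}_{\tilde{E}}|^{2} &= \mathrm{tr}_{\omega}\tilde{\Theta}_{\tilde{E}},
\end{align*}
where $\omega:=\dr'_{t,u,v,\tilde{\e}_{j}}+\ddb\varphi'_{l,u,v,\tilde{\e}}$ and $\tilde{\chi}:=\partial_{t}\dr'_{t,u,v,\tilde{\e}_{j}}=\chi-\sum_{i}\ddb\log\log^{2}|S_{i}|_{i}^{2}$.

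For the upper bound, I will apply the maximum principle to
$$H := t\dot{\varphi}'_{l,u,v,\tilde{\e}}-\varphi'_{l,u,v,\tilde{\e}}+\delta\log|\tilde{S}_{\tilde{E}}|^{2}-nt.$$
Combining the identities above gives $(\partial_{t}-\Delta_{\omega})H=-\mathrm{tr}_{\omega}(\omega_{*}-\delta\tilde{\Theta}_{\tilde{E}})$, where $\omega_{*}:=\dr'_{t,u,v,\tilde{\e}_{j}}-t\tilde{\chi}=\pi^{*}\dr_{0}+u\theta-v\sum_{i}\ddb\log\log^{2}|S_{i}|_{i}^{2}+\eta\sum_{j}\ddb\mathcal{F}(|S_{j}|_{j}^{2},1-b_{j},\e_{j}^{2})$ is independent of $t$. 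Since $-\delta\tilde{\Theta}_{\tilde{E}}=\delta\ddb\log|\tilde{S}_{\tilde{E}}|^{2}$ away from $\tilde{E}$, Assumption \ref{mainass}(2) yields $\pi^{*}\dr_{0}+\delta\ddb\log|\tilde{S}_{\tilde{E}}|^{2}\geq c_{\delta}\theta$, and together with the positivity afforded by Lemma \ref{conicalapprox} and Lemma \ref{CGlemma} for suitable $u,v,\eta$, the form $\omega_{*}-\delta\tilde{\Theta}_{\tilde{E}}$ is semi-positive on $X'\setminus\tilde{E}$. Thus $(\partial_{t}-\Delta_{\omega})H\leq 0$. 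The barrier $\delta\log|\tilde{S}_{\tilde{E}}|^{2}$ sends $H$ to $-\infty$ along $\tilde{E}$, so by the Omori--Yau maximum principle on the complete K\"ahler manifold $(X'\setminus\bigcup_{i}D_{i},\omega)$ provided by Theorem \ref{thmLZ}, the supremum of $H$ is realized at $t=0$. The zero Lelong number bound \eqref{eq:initial lb1} gives $\varphi'_{l,u,v,\tilde{\e}}(0)\geq\delta\log|\tilde{S}_{\tilde{E}}|^{2}+C_{\delta}$ (after absorbing the bounded term $-\eta\sum_{j}\mathcal{F}$), whence $H(0)\leq C$. Rearranging and invoking Theorem \ref{thm:upper-2} to bound $\varphi'_{l,u,v,\tilde{\e}}$ from above produces the desired upper bound.

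For the lower bound I will apply the maximum principle to a dual quantity of the shape
$$G := -t\dot{\varphi}'_{l,u,v,\tilde{\e}}+nt\log t+A\bigl(\varphi'_{l,u,v,\tilde{\e}}-\delta\log|\tilde{S}_{\tilde{E}}|^{2}\bigr),$$
with $A>1$ chosen large enough that $A(\dr'_{t,u,v,\tilde{\e}_{j}}-\delta\tilde{\Theta}_{\tilde{E}})-t\tilde{\chi}=A(\omega_{*}-\delta\tilde{\Theta}_{\tilde{E}})+(A-1)t\tilde{\chi}$ is semi-positive for $t\in[0,T']$ (possible because $A(\omega_{*}-\delta\tilde{\Theta}_{\tilde{E}})\geq Ac_{\delta}\theta$ can be made to dominate the $t\tilde{\chi}$ term). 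The identities above give
$$(\partial_{t}-\Delta_{\omega})G=(A-1)\dot{\varphi}'_{l,u,v,\tilde{\e}}+n(\log t+1-A)+\mathrm{tr}_{\omega}\bigl(A(\omega_{*}-\delta\tilde{\Theta}_{\tilde{E}})+(A-1)t\tilde{\chi}\bigr).$$
At any interior maximum of $G$ this must be $\geq 0$; since $\tilde{E}$ still acts as a $-\infty$ barrier for $G$ through the $-A\delta\log|\tilde{S}_{\tilde{E}}|^{2}$ term, and since at $t=0$ the upper bound on $\varphi'_{l,u,v,\tilde{\e}}(0)$ together with Theorem \ref{thm-lb1} controls $G(0)$ from above, the sup of $G$ is attained either at $t=0$ or at the worst-case interior maximum, and in both cases we obtain $G\leq C$. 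Using the lower bound $\varphi'_{l,u,v,\tilde{\e}}-\delta\log|\tilde{S}_{\tilde{E}}|^{2}\geq C_{\delta}$ from Theorem \ref{thm-lb1} then gives $-t\dot{\varphi}'_{l,u,v,\tilde{\e}}\leq C-nt\log t-\delta\log|\tilde{S}_{\tilde{E}}|^{2}$ (after rescaling $\delta$ suitably), which is exactly the claimed lower bound.

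The main obstacle is arranging the signs in the parabolic equation satisfied by $H$ and $G$ so that a maximum/minimum principle applies in the presence of four distinct types of degeneracies — the non-ample locus $\tilde{E}$ and the three species of exceptional divisors $D_i, E_j, F_k$ — and simultaneously compensating for them via the barrier $\delta\log|\tilde{S}_{\tilde{E}}|^{2}$. The key technical input is the positivity $\pi^{*}\dr_{0}+\delta\ddb\log|\tilde{S}_{\tilde{E}}|^{2}\geq c_{\delta}\theta$ from Kodaira's Lemma (Assumption \ref{mainass}(2)), which makes the critical form $\omega_{*}-\delta\tilde{\Theta}_{\tilde{E}}$ semi-positive in spite of the Carlson--Griffiths and approximate-conical contributions to $\dr'_{t,u,v,\tilde{\e}_{j}}$; the lower bound is especially delicate because $\dot{\varphi}'_{l,u,v,\tilde{\e}}$ intrinsically degenerates like $n\log t$ as $t\to 0^{+}$, and this degeneration must be tracked exactly against the singularity of the barrier near $\tilde{E}$.
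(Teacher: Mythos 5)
Your upper-bound argument is essentially the same as the paper's: the paper evolves the identical quantity $H^{+}=t\dot{\varphi}'_{l,u,v,\tilde{\e}}-(\varphi'_{l,u,v,\tilde{\e}}-\de\log|\tilde{S}_{\tilde{E}}|^{2})-nt$, computes $(\partial_{t}-\Delta_{\dr})H^{+}=-\mathrm{tr}_{\dr}(\dr'_{0,u,v,\tilde{\e}_{j}}-\de\Theta_{\tilde{E}})\leq 0$ via Assumption~\ref{changinghermitianmetricsass}, notes that $H^{+}\to-\infty$ toward $\tilde{E}$, and applies the maximum principle together with Theorems \ref{thm:upper-2} and \ref{thm-lb1}. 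That half is fine.

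The lower bound, however, has two genuine problems. First, your auxiliary function $G:=-t\dot{\varphi}'_{l,u,v,\tilde{\e}}+nt\log t+A(\varphi'_{l,u,v,\tilde{\e}}-\de\log|\tilde{S}_{\tilde{E}}|^{2})$ tends to $+\infty$, not $-\infty$, as $p\to\tilde{E}$: since $\varphi'_{l,u,v,\tilde{\e}}$ is bounded (Theorem \ref{thmLZ}), the term $-A\de\log|\tilde{S}_{\tilde{E}}|^{2}\to+\infty$. So $\tilde{E}$ does not act as a barrier forcing the \emph{supremum} of $G$ into the interior, and the maximum principle cannot be applied in the direction you need; the claim ``$G\leq C$'' never gets off the ground. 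Second, even granting $G\leq C$, the algebra you outline does not yield the claimed bound: inserting $\varphi'_{l,u,v,\tilde{\e}}-\de\log|\tilde{S}_{\tilde{E}}|^{2}\geq C_{\de}$ into $G\leq C$ causes the $\de\log|\tilde{S}_{\tilde{E}}|^{2}$ contributions to cancel, leaving $-t\dot{\varphi}'_{l,u,v,\tilde{\e}}\leq C'-nt\log t$, i.e.\ $\dot{\varphi}'_{l,u,v,\tilde{\e}}\geq n\log t-C'/t$. This has a $1/t$ loss as $t\to 0$ that the lemma's bound $n\log t+\de\log|\tilde{S}_{\tilde{E}}|^{2}-C_{1\de}$ does not, so it is strictly weaker and cannot be massaged into the statement by ``rescaling $\de$.''

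The paper instead \emph{minimizes} the quantity $H^{-}:=\dot{\varphi}'_{l,u,v,\tilde{\e}}+A(\varphi'_{l,u,v,\tilde{\e}}-\de\log|\tilde{S}_{\tilde{E}}|^{2})-n\log t$, which \emph{does} tend to $+\infty$ both toward $\tilde{E}$ and as $t\to 0^{+}$, so its infimum is attained at an interior point $(p_{0},t_{0})$. At that point, the evolution inequality \eqref{eq:lb-heat} (driven by the background volume bound \eqref{eq:vol} and the concavity of $\log$) forces $\dr^{n}\geq C't_{0}^{n}\dr'^{n}_{t,u,v,\tilde{\e}_{j},\de}$, which plugged into the \MA\ equation bounds $H^{-}(p_{0},t_{0})$, and hence $H^{-}$ everywhere, from below by a constant. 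Rearranging using Theorems \ref{thm:upper-2}--\ref{thm-lb1} then gives exactly $\dot{\varphi}'_{l,u,v,\tilde{\e}}\geq n\log t+\de\log|\tilde{S}_{\tilde{E}}|^{2}-C_{1\de}$ after renaming $A\de$ as $\de$. The crucial structural choice you missed is to keep $\dot{\varphi}'_{l,u,v,\tilde{\e}}$ itself (no $t$-prefactor) and to subtract $n\log t$ so that the $t\to 0$ degeneracy is absorbed directly into the barrier.
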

\begin{proof}
Denote the evolving metric in \eqref{eq:ma flow 4} as $\dr(t)$ for simplicity and take the time derivative of \eqref{eq:ma flow 4} to give
\begin{equation}\label{eq:t-dev}
\dt\dot{\varphi}'_{l,v,\e}=\Delta_{\dr}\dot{\varphi}'_{l,v,\e}+tr_{\dr}(\chi-\sum_{i}\ddb\log\log^{2}|S_{i}|_{i}^{2}).
\end{equation}
Thus we have
$$(\dt-\Delta_{\dr})(t\dot{\varphi}'_{l,v,\e}-(\varphi'_{l,v,\e}+\de\log|\tilde{S}_{\tilde{E}}|^{2})-nt)
=-tr_{\dr}(\dr'_{t,l,v,\e}(0)-\de\Theta_{\tilde{E}})\leq 0.$$
where the last inequality holds by Assumption \ref{changinghermitianmetricsass} (iii).  Note that $H^{+}(p,t):=t\dot{\varphi}'_{l,v,\e}-(\varphi'_{l,v,\e}-\de\log|\tilde{S}_{\tilde{E}}|^{2})-nt\to-\infty$ when $p\in X'\setminus \tilde{E}$ approaches $\tilde{E},$ and thus by the maximum principle we have
$$\dot{\varphi}'_{l,v,\e}\leq n+\frac{\varphi'_{l,v,\e}-\de\log|\tilde{S}_{\tilde{E}}|^{2}+C_1}{t}\leq n+\frac{C_{2}-\de\log|\tilde{S}_{\tilde{E}}|^{2}}{t},$$
where above and in what follows, $C_i$ will denote some constant depending only on $\de, T'$.  we have used Theorem \ref{thm:upper-2}.

For the lower bound, by \eqref{eq:t-dev}, \eqref{CGexpansion} and \eqref{eq:vol}, for $A\gg 1$ dependong on $\de, T'$ we have the following inequality
\begin{align}\label{eq:lb-heat}
&(\dt-\Delta_{\dr})\left(\dot{\varphi}'_{l,v,\e}+A(\varphi'_{l,v,\e}-\de\log|\tilde{S}_{\tilde{E}}|^{2})-n\log t\right)\nonumber\\
=\;&tr_{\dr}(\chi-\sum_{i}\ddb\log\log^{2}|S_{i}|_{i}^{2})+A\log\frac{\dr^{n}\prod_{i}|S_{i}|_{i}^{2}\log^{2}|S_{i}|_{i}^{2}
\prod_{j}(|S_{j}|_{j}^{2}+\e^{2})^{b_{j}}}{\Omega'\prod_{k}(|S_{k}|_{k}^{2}+\e^{2})^{a_{k}}}\nonumber\\
&+\;Atr_{\dr}(\dr'_{t,l,v,\e}-\de\Theta_{\tilde{E}})-An-\frac{n}{t}\nonumber\\
\geq\;&tr_{\dr}(\chi-\sum_{i}\ddb\log\log^{2}|S_{i}|_{i}^{2})-A\sum_{k}a_{k}\log(|S_{k}|_{k}^{2}+\e^{2})-An-\frac{n}{t}\nonumber\\
&+\;A\log\frac{C_3(v+t)^{n}\dr^{n}}{\dr'^{n}_{t,l,v,\e,\de}}+Atr_{\dr}\dr'_{t,l,v,\e,\de}\nonumber\\
\geq\;&tr_{\dr}\left((A-1)\dr'_{t,l,v,\e,\de}+(\chi-\sum_{i}\ddb\log\log^{2}|S_{i}|_{i}^{2})\right)-\frac{C_4}{t}\nonumber\\
\geq\;&\frac{A}{2}tr_{\dr}\dr'_{t,l,v, \e,\de}-\frac{C_4}{t}
\geq\frac{An}{2}\left(\frac{\dr'^{n}_{t,l,v, \e,\de}}{\dr^{n}}\right)^{1/n}-\frac{C_4}{t},
\end{align}
where the second inequality follows from the property of logarithmic functions and we have also used  Assumption \ref{changinghermitianmetricsass} (iii).

 Recall that $\varphi'_{l,v,\e}$ itself is a smooth and bounded
solution to \eqref{eq:ma flow 4}, thus when $t\to 0$ or the point $p\in X'\setminus\tilde{E}$ approaches $\tilde{E},$ the function
$H^{-}(p,t):=\dot{\varphi}'_{}+A(\varphi'_{l,v,\e}-\de\log|\tilde{S}_{\tilde{E}}|^{2})-n\log t\to+\infty.$ Thus its infimum over
$(X'\setminus\tilde{E})\times(0,T']$ will be attained at some $(p_{0},t_{0})$ where $t_0\neq 0$ and  $p_0\notin \tilde{E},$ and by the maximum principle it follows from \eqref{eq:lb-heat} that at $(p_{0},t_{0})$ we have
\begin{equation}\label{eq:vol-lb}
\dr^{n}\geq C_5 t^{n}\dr'^{n}_{t,l,v,\e,\de}.
\end{equation}
Thus we have
\begin{align}\label{eq:H-lb}
&\dot{\varphi}'_{l,v,\e}+A(\varphi'_{l,v,\e}-\de\log|\tilde{S}_{\tilde{E}}|^{2})-n\log t\nonumber\\
\geq\;&\inf H^{-}(p,t)=H^{-}(p_{0},t_{0})\nonumber\\
\geq\;&\log\frac{\dr^{n}}{\dr'^{n}_{t,l,v,\e,\de}}(p_{0},t_{0})+n\log(v+t_{0})-n\log t_{0}+C_6
\geq\; C_7
\end{align}
\color{black}{where in the second inequality we have used \eqref{eq:ma flow 4}, \eqref{eq:vol-lb} and Assumption \ref{changinghermitianmetricsass} (iii)}\color{black}{}.  By Theorem \ref{thm:upper-2} and \ref{thm-lb1} we conclude the lower bound estimate of $\dot{\varphi}'_{l,v,\e}.$
\end{proof}

\subsubsection{Upper bound on Laplacian of $ \varphi'_{l,v,\e}$}

Next we will prove the following Laplacian estimate for the solutions:
\begin{theorem}\label{thm-C2}
There exist constants $c_{1},c_{2}>0$ depending only on $\de$ and $T'$ such that on $(X'\setminus\tilde{E})\times(0,T']$ we have

\begin{equation}\label{eq:C2}
c_{3} t^n  |\tilde{S}_{\tilde{E}}|^{(c_2/t)+\de}\prod_{k}(|S_{k}|_{k}^{2}+\e^{2})^{a_{k}}e^{-\frac{c_{1}}{t}}\leq tr_{\hat{\dr}}\dr\leq\frac{1}{|\tilde{S}_{\tilde{E}}|^{c_2/t}}e^{\frac{c_{1}}{t}}.
\end{equation}

where $$\dr:=\;\dr'_{t,l,v,\e}+\ddb\varphi'_{l,v,\e}$$

$$\hat{\dr}:=\theta-\sum_{i}\ddb\log\log^{2}|S_{i}|_{i}^{2}+\eta\sum_{j}\ddb\mathcal{F}(|S_{j}|_{j}^{2},1-b_{j},\e^{2}).$$
\end{theorem}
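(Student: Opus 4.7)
The plan is to run a parabolic Chern--Lu / Yau second order estimate for $\log tr_{\hat{\dr}}\dr$, modified by Tsuji's trick of adding a multiple of $\log|\tilde{S}_{\tilde{E}}|^{2}$ to push the maximum away from the degeneracy locus $\tilde{E}$, in the spirit of \cite{CLS,ST3,Ts}. The reference metric $\hat{\dr}$ has bounded holomorphic bisectional curvature uniformly in the regularization parameters: by Lemma \ref{CGlemma}(2) its Carlson--Griffiths part has bounded geometry of infinite order, and by Lemma \ref{conicalapprox} its conical pieces are uniformly equivalent to the explicit smooth local models \eqref{eq:cone-app}. Combining this with \eqref{eq:ma flow 4} and the formula $Ric(\Omega')=-\chi+\sum_i\Theta_i+\sum_jb_j\Theta_j-\sum_k a_k\Theta_k$ yields a parabolic differential inequality
\begin{equation*}
(\dt-\Delta_{\dr})\log tr_{\hat{\dr}}\dr \;\le\; C_{0}\, tr_{\dr}\hat{\dr}+C_{1}
\end{equation*}
on $(X'\setminus\bigcup_i D_i)\times(0,T']$, with $C_0,C_1$ depending only on $T'$.

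For the upper bound, I would consider the auxiliary function
\begin{equation*}
Q \;:=\; t\log tr_{\hat{\dr}}\dr+\de\log|\tilde{S}_{\tilde{E}}|^{2}-A\bigl(\varphi'_{l,u,v,\tilde{\e}}-\de\log|\tilde{S}_{\tilde{E}}|^{2}\bigr),
\end{equation*}
where $A$ is a large constant chosen so that $A(\dr'_{t,u,v,\tilde{\e}_j}-\de\Theta_{\tilde{E}})\ge(C_{0}+1)\hat{\dr}$ uniformly in the regularization parameters, which is possible by Assumption \ref{changinghermitianmetricsass}. The term $-A(\varphi'-\de\log|\tilde{S}|^{2})$ is bounded above by Theorems \ref{thm:upper-2} and \ref{thm-lb1}, while $\de\log|\tilde{S}|^{2}\to-\infty$ along $\tilde{E}$ and $t\log tr_{\hat{\dr}}\dr\to 0$ as $t\to 0^{+}$ (using smoothness of the regularized solution at $t=0$), so $Q$ attains its supremum over $(X'\setminus\tilde E)\times(0,T']$ at some interior $(p_{0},t_{0})$ with $t_0>0$. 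At this point, combining the Chern--Lu inequality with $(\dt-\Delta_{\dr})Q\ge 0$ and the identity $\Delta_{\dr}\varphi'_{l,u,v,\tilde{\e}}=n-tr_{\dr}\dr'_{t,u,v,\tilde{\e}_j}$, the bad term $C_{0}\,tr_{\dr}\hat{\dr}$ is absorbed into $A\,tr_{\dr}(\dr'_{t,u,v,\tilde{\e}_j}-\de\Theta_{\tilde{E}})$, leaving a pointwise bound $t_{0}\,tr_{\dr}\hat{\dr}\le C_{2}$ at $(p_0,t_0)$. Plugging this into \eqref{eq:ma flow 4} and using the upper bound on $\dot\varphi'_{l,u,v,\tilde{\e}}$ from Lemma \ref{lem-t-dev} then produces a uniform upper bound for $Q$, which is exactly the stated upper bound on $tr_{\hat{\dr}}\dr$.

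For the lower bound, I would use the Monge--Amp\`ere equation directly. Comparing volume forms via the local models \eqref{localCGmodel} and \eqref{eq:cone-app} gives
\begin{equation*}
\frac{\dr^{n}}{\hat{\dr}^{n}}\;\asymp\;e^{\dot\varphi'_{l,u,v,\tilde{\e}}}\prod_{k}(|S_{k}|_{k}^{2}+\e_{k}^{2})^{a_{k}},
\end{equation*}
with ratio uniformly bounded above and below in the parameters. Combining with Lemma \ref{lem-t-dev} and AM-GM in the form $tr_{\hat{\dr}}\dr\ge n(\dr^{n}/\hat{\dr}^{n})^{1/n}$ gives a lower bound of the advertised shape; alternatively, the same auxiliary function argument can be run with reversed signs, applied to $-t\log tr_{\hat{\dr}}\dr+\de\log|\tilde{S}_{\tilde{E}}|^{2}+t\sum_k a_k\log(|S_k|^2+\e_k^2)+A\varphi'_{l,u,v,\tilde{\e}}$, to recover the inequality exactly as stated.

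The main obstacle is establishing the parabolic Chern--Lu inequality with a constant $C_{0}$ genuinely uniform in $l,u,v,\tilde{\e}$: the bisectional curvature of the Guenancia--P\u aun approximation of $\hat{\dr}$ near the log terminal divisors depends on $\e_j$, and the uniform equivalence to the local model \eqref{eq:cone-app} must be promoted to a uniform upper bound on holomorphic bisectional curvature via direct curvature computations. Once this is secured, the absorption argument via the $A$-trick works because Assumption \ref{changinghermitianmetricsass} guarantees that $\dr'_{t,u,v,\tilde{\e}_j}-\de\Theta_{\tilde E}$ dominates a uniform K\"ahler metric on $X'\setminus\tilde E$, so the overall constants depend only on $\delta$ and $T'$.
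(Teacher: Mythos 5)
Your overall scheme (Tsuji's trick, barrier including $A(\varphi'-\de\log|\tilde S_{\tilde E}|^2)$, maximum principle, determinant argument for the lower bound) matches the paper's proof, but there is a genuine gap in the central step. You assert the parabolic inequality
$(\dt-\Delta_{\dr})\log tr_{\hat{\dr}}\dr \le C_{0}\, tr_{\dr}\hat{\dr}+C_{1}$
with $C_0,C_1$ depending only on $T'$, and you correctly flag that this requires a uniform bisectional‑curvature bound on $\hat\dr$, independent of the conical regularization parameters $\e_j$. But such a uniform bound simply does not hold: the Guenancia--P\u aun approximate conic part of $\hat\dr$ has bisectional curvature that blows up as $\e_j\to 0$, and this cannot be repaired by ``direct curvature computations'' since the failure is genuine. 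The paper explicitly acknowledges this (see the proof of Lemma \ref{lem-bisec lb} and Remark \ref{rem-C2}: ``although the bisectional curvature of $\hat\dr$ is not uniformly bounded from below, we can still construct a bounded auxiliary function'').

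The paper's remedy, taken from \cite{GP}, is Lemma \ref{lem-bisec lb}: a \emph{weighted} lower bound $Bisec(\hat\dr)\ge -(C_1\ddb\Psi_\rho+C_2\hat\dr)\otimes\hat\dr$ with $\Psi_\rho=\sum_j\mathcal F(|S_j|^2,\rho,\e_j^2)$ a uniformly bounded PSH auxiliary function. Feeding this into the Aubin--Yau inequality produces an extra unbounded term $C_1'\Delta\Psi_\rho$ on the right-hand side (see \eqref{eq:aubinyau2}), which is then absorbed by adding $B\Psi_\rho$ to the barrier, i.e. $G:=t\log tr_{\hat\dr}\dr-A(\varphi'-\de\log|\tilde S_{\tilde E}|^2)+B\Psi_\rho$ with $B\ge C_1'T'$. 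Your $Q$ lacks the $B\Psi_\rho$ term, so the $\Delta\Psi_\rho$ term cannot be killed and the maximum-principle argument as written does not close. A secondary imprecision: for $tr_{\hat\dr}\dr$ the relevant tool is the Aubin--Yau/second-fundamental-form estimate requiring a \emph{lower} bisectional-curvature bound on the reference metric, not Chern--Lu (which estimates $tr_\dr\hat\dr$ using an \emph{upper} bound and, per Remark \ref{rem-C2}, would need a different approximation scheme). Your lower-bound argument via the volume ratio \eqref{eq:det} and AM-GM matches the paper and is fine.
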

We begin with the following Lemma on the background form $\hat{\dr}$ \color{black}{which is positive by Assumption \ref{changinghermitianmetricsass} (i)} \color{black}{}.

\begin{lemma}\label{lem-bisec lb}
There exist a sufficiently small constant $\rho\in(0,1)$ and constants $C_{1},C_{2}>0$ such that
\begin{equation}\label{eq:bisec lb}
Bisec(\hat{\dr})\geq-(C_{1}\ddb\Psi_{\rho}+C_{2}\hat{\dr})\otimes\hat{\dr},
\end{equation}
where $\Psi_{\rho}:=\sum_{j}\mathcal{F}(|S_{j}|^{2},\rho,\e^{2})$ is PSH with $\mathcal{F}$ defined in \eqref{conic-regg}.
\end{lemma}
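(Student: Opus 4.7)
The plan is to estimate $\mathrm{Bisec}(\hat{\dr})$ locally by decomposing
\[
\hat{\dr} = \hat{\dr}_{CG} + \hat{\dr}_c, \qquad \hat{\dr}_{CG} := -\sum_i \ddb \log\log^2 |S_i|_i^2, \qquad \hat{\dr}_c := \theta + \eta\sum_j \ddb \mathcal{F}(|S_j|_j^2, 1-b_j, \e_j^2),
\]
and separately controlling the bisectional curvatures of the Carlson-Griffiths piece $\hat{\dr}_{CG}$ and the regularized conical piece $\hat{\dr}_c$, before recombining via the normal crossings structure of the exceptional divisors.

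First, I would handle $\hat{\dr}_{CG}$. By the explicit Poincar\'{e}-cusp local model in Lemma \ref{CGlemma}, together with the bounded geometry of infinite order asserted there, a direct computation in local holomorphic coordinates (as in \cite{CG}, and carried out in detail in \cite{Guenancia}) gives a uniform bound $\mathrm{Bisec}(\hat{\dr}_{CG}) \geq -C\, \hat{\dr}_{CG} \otimes \hat{\dr}_{CG}$ on $X' \setminus \bigcup_i D_i$, independent of the small parameters $u,v,\tilde{\e}$. Since $\hat{\dr} \geq \hat{\dr}_{CG}$ as real $(1,1)$-forms, this contribution will be absorbed into the $-C_2 \hat{\dr} \otimes \hat{\dr}$ term in \eqref{eq:bisec lb}.

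The main input is the curvature bound for the regularized conical piece $\hat{\dr}_c$, which is the content of Guenancia-P\u{a}un's key technical lemma in \cite{GP}. They establish that for any $\rho \in (0, 1-\max_j b_j)$, there exist constants $C_1, C_2 > 0$ uniform in the $\e_j$ such that
\[
\mathrm{Bisec}(\hat{\dr}_c) \geq -(C_1 \ddb \Psi_\rho + C_2 \hat{\dr}_c) \otimes \hat{\dr}_c,
\]
where $\Psi_\rho = \sum_j \mathcal{F}(|S_j|^2, \rho, \e_j^2)$. The auxiliary PSH function $\Psi_\rho$ is needed because the bisectional curvature of a genuine conical metric degenerates along its conical divisor; the blow-up rate is captured precisely by $\ddb$ of a function with sharper cone angle $\rho < 1-b_j$. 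I would invoke this estimate directly, noting that it is exactly the form used by Guenancia-P\u{a}un to run parabolic complex \MA\ equations in the conical setting.

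Finally I would combine the two estimates by working in normal crossing coordinates centered at intersection points, where $D_i = \{z_i = 0\}$ and $E_j = \{w_j = 0\}$ live in disjoint complex coordinate blocks. In such coordinates both $\hat{\dr}_{CG}$ and $\hat{\dr}_c$ split as sums over the corresponding blocks, the mixed curvature components of $\hat{\dr}_{CG} + \hat{\dr}_c$ vanish identically, and $\mathrm{Bisec}(\hat{\dr})$ reduces to the direct sum of the two separate bisectional curvatures. Using $\hat{\dr} \geq \hat{\dr}_{CG}$ and $\hat{\dr} \geq \hat{\dr}_c$ to dominate the lower bounds gives \eqref{eq:bisec lb}. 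The main obstacle I anticipate is verifying that the Guenancia-P\u{a}un estimate continues to hold uniformly in $\e_j$ when several $E_j$'s simultaneously cross, and that the cross-block curvature analysis at the intersection of a $D_i$ and an $E_j$ really eliminates all mixed terms; these both ultimately come down to explicit computations in the product local model combining \eqref{localCGmodel} with \eqref{eq:cone-app}.
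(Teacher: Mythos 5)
Your proposal has the right ingredients (Carlson--Griffiths bounded geometry, the Guenancia--P\u{a}un bisectional curvature lemma) but the decomposition you use to combine them is flawed in two places, and the combination step is a genuine gap rather than a routine check.

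First, the form you call $\hat{\dr}_{CG} := -\sum_i \ddb\log\log^2|S_i|_i^2$ is not a positive $(1,1)$-form. From \eqref{CGexpansion}, the term $-2\,\ddb\log\|S\|^2/\log\|S\|^2 = 2\Theta/\log\|S\|^2$ has no definite sign near $D_i$; positivity of a Carlson--Griffiths form requires the full combination $\theta - \ddb\log\log^2\|S\|^2$. Consequently $\mathrm{Bisec}(\hat{\dr}_{CG})$ is not defined, and the inequality $\mathrm{Bisec}(\hat{\dr}_{CG}) \geq -C\,\hat{\dr}_{CG}\otimes\hat{\dr}_{CG}$ has no content; nor can an inequality between indefinite symmetric tensors be propagated to the tensor products in the way the argument needs.

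Second, and more fundamentally, the assertion that ``the mixed curvature components of $\hat{\dr}_{CG}+\hat{\dr}_c$ vanish identically'' is false for the actual metric $\hat{\dr}$. The bisectional curvature of a sum of K\"ahler metrics is not the sum of the bisectional curvatures of the summands; this would hold only for a genuine product metric in which the metric tensor is block-diagonal with each block depending solely on its own coordinates. The metric $\hat{\dr}$ is only \emph{uniformly equivalent} to the block-diagonal local models \eqref{localCGmodel} and \eqref{eq:cone-app}, and uniform equivalence of metrics says nothing about their curvatures, which involve second derivatives. Moreover $\hat{\dr}$ genuinely has cross-block entries: the term $\Theta_i/\log\|S_i\|^2$ in \eqref{CGexpansion}, and $\partial\log h_i$ inside $\partial\log\|S_i\|^2$, contribute in all coordinate directions, not only along $z^1$. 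These are lower order, but showing they are harmless is precisely the nontrivial step, which your proposal defers to ``computations in the product local model'' without carrying out.

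The paper sidesteps both problems in one stroke. It keeps the full Carlson--Griffiths metric $\theta - \sum_i\ddb\log\log^2|S_i|_i^2$ intact (a genuine K\"ahler metric) and then invokes the Kobayashi/Tian--Yau quasi-coordinates near $\bigcup_i D_i$, in which this metric has bounded geometry of all orders and becomes, from the point of view of the Guenancia--P\u{a}un computation, simply a bounded smooth background replacing the $\theta$ in \cite{GP}. One then repeats the \cite{GP} construction verbatim in quasi-coordinates, with the conical perturbation $\eta\ddb\mathcal{F}$ added to this quasi-coordinate background. This avoids both the curvature of an indefinite form and the non-additivity of curvature, which are the two obstacles your approach would need to overcome. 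If you wish to pursue the additive decomposition strategy instead, you would at minimum have to (i) replace $\hat{\dr}_{CG}$ by the positive form $\theta - \sum_i\ddb\log\log^2|S_i|_i^2$, and (ii) carry out the cross-term estimate at intersections $D_i\cap E_j$ explicitly, verifying that the off-block curvature components are controlled by $\hat{\dr}\otimes\hat{\dr}$ and $\ddb\Psi_\rho\otimes\hat{\dr}$ uniformly in $\e_j$; this is in effect redoing the quasi-coordinate computation the paper cites, so you gain little by avoiding it.
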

The proof of this lemma is almost identical to the proof of (4.3) in \cite{GP} on p31-32. The only difference is that in \cite{GP} the term $-\sum_{i}\ddb\log\log^{2}|S_{i}|_{i}^{2}$ is omitted from $\hat{\omega}$ thus making it smooth across the log canonical divisor $\bigcup_i D_i$, while in our case $\hat{\omega}$ above has cusp like singularities at $\bigcup_i D_i$.  However, as $\hat{\dr}$ is a standard Carlson-Griffiths form as in Lemma \ref{CGlemma}, there exist quasi-coordinates near those divisors $D_i$ such that the metric has bounded geometry in these coordinates  \cite{Koba,TY}, and making use of the quasi-coordinates we can adapt the construction in \cite{GP} to our case and conclude the lemma.

\begin{proof}[proof of Theorem \ref{thm-C2}]
This estimate highly depends on the properties of $\hat{\omega}$.  This is similar to the role played by the approximate conic metrics from \cite{GP} in the conical \ka-Ricci flow \cite{LiuZ,Shen}.
Recall that the approximate \MA flow equation \eqref{eq:ma flow 4} corresponds to the twisted \ka-Ricci flow:
\begin{align}\label{eq:krf-twisted}
\dt\dr=&-Ric(\dr)+2\pi\sum_{i}[D_{i}]+\sum_{j}b_{j}\ddb\log\frac{|S_{j}|_{j}^{2}+\e^{2}}{|S_j|_j^2}\nonumber\\
&-\sum_{k}a_{k}\ddb\log\frac{|S_{k}|_{k}^{2}+\e^{2}}{|S_k|_k^2}.
\end{align}
Thus at any point $p$, after choosing holomorphic coordinates in which $\hat{g}_{i\bar{j}}|_{p}=\de_{ij}$ and $g_{i\bar{j}}|_{p}=\lambda_{i}\de_{ij}$ (correspond to $\hat{\dr}$ and $\dr$) the corresponding parabolic Aubin-Yau Inequality (the elliptic version appeared in \cite{Au,Yau2}) takes the form
\begin{align}\label{eq:aubinyau1}
&(\dt-\Delta)\log tr_{\hat{\dr}}\dr\nonumber\\ \leq&\frac{1}{tr_{\hat{\dr}}\dr}\left(-\sum_{p,q}\frac{\lambda_{p}}{\lambda_{q}}R_{p\bar{p}q\bar{q}}(\hat{\dr})
+tr_{\hat{\dr}}(\sum_{j}b_{j}\ddb\log\frac{|S_{j}|_{j}^{2}+\e^{2}}{|S_j|_j^2}-\sum_{k}a_{k}\ddb\log\frac{|S_{k}|_{k}^{2}+\e^{2}}{|S_k|_k^2})\right)\nonumber\\
=&-\frac{1}{tr_{\hat{\dr}}\dr}\sum_{\color{black}{p\leq q}}(\frac{\lambda_{p}}{\lambda_{q}}+\frac{\lambda_{q}}{\lambda_{p}}-2)R_{p\bar{p}q\bar{q}}(\hat{\dr})\nonumber\\
&+\frac{1}{tr_{\hat{\dr}}\dr}tr_{\hat{\dr}}(\ddb\log\frac{\prod_{j}(|S_{j}|_{j}^{2}+\e^{2})^{b_{j}}\hat{\dr}^{n}}{\Omega'}-Ric(\Omega')
+\sum_{j}b_{j}\Theta_{j})\nonumber\\&-\frac{1}{tr_{\hat{\dr}}\dr}\sum_{k}a_{k}tr_{\hat{\dr}}(\frac{\e^{2}}{|S_{k}|_{k}^{2}+\e^{2}}\Theta_{k}
+\frac{\e^{2}DS_{k}\wedge\overline{DS}_{k}}{(|S_{k}|_{k}^{2}+\e^{2})^{2}}),
\end{align}
where $\Delta$ represents the Laplacian with respect to the evolving metric $\dr.$ Note that from \cite{GP} although the bisectional curvature of $\hat{\dr}$ is not uniformly bounded from below, we can still construct a bounded auxiliary function to derive the trace estimate.

By Lemma \ref{lem-bisec lb}, the first term in \eqref{eq:aubinyau1} can be controlled as following:
\begin{align}\label{eq:control 1}
&-\frac{1}{tr_{\hat{\dr}}\dr}\sum_{\color{black}{p\leq q}}(\frac{\lambda_{p}}{\lambda_{q}}+\frac{\lambda_{q}}{\lambda_{p}}-2)R_{p\bar{p}q\bar{q}}(\hat{\dr})\nonumber\\
\leq&\frac{1}{\sum_{p}\lambda_{p}}\sum_{\color{black}{p< q}}\left(\frac{\lambda_{p}}{\lambda_{q}}(C_{1}\Psi_{\rho,q\bar{q}}+C_{2})+
\frac{\lambda_{q}}{\lambda_{p}}(C_{1}\Psi_{\rho,p\bar{p}}+C_{2})\right)\leq C_{3}\Delta\Psi_{\rho}+C_{4}tr_{\dr}\hat{\dr}.
\end{align}
\color{black}{ where above, and in what follows, $C_i$ will denote a constant depending only on $\de, T'$ and where in the last inequality we have used the fact that $\partial \overline{\partial}\Psi_{\rho} \geq -C \widehat{\omega}$ for a constant $C$ independent of $\rho$ by Lemma \ref{conicalapprox}.}\color{black} {} For the second term, we first note that $-Ric(\Omega')+\sum_{j}b_{j}\Theta_{j}$ is uniformly bounded with respect to $\hat{\dr}.$ Next, note that
$\hat{\dr}$ has similar expansion as \eqref{eq:bg-expansion}, which implies that
\begin{align}\label{eq:control 2}
\ddb\log\frac{\prod_{j}(|S_{j}|_{j}^{2}+\e^{2})^{b_{j}}\hat{\dr}^{n}}{\Omega'}&=-\sum_{i}\ddb(\log|S_{i}|_{i}^{2}\log^{2}|S_{i}|_{i}^{2})+\ddb\log H\nonumber\\&=\sum_{i}(\Theta_{i}-\ddb\log\log^{2}|S_{i}|_{i}^{2})+\ddb\log H,
\end{align}
where by direct computations $$H=f_{0}+f_{1}(\sum_{i}|S_{i}|_{i}^{2}\log|S_{i}|_{i}^{2}+\sum_{j}(|S_{j}|_{j}^{2}+\e^{2})^{b_{j}})+\cdots$$
where $f_{0},f_{1},\cdots$ are smooth and bounded functions with $f_{0}>0$. Thus the second term of \eqref{eq:aubinyau1} is also uniformly bounded with respect to $\hat{\dr}.$ Finally, as $\Theta_{k}$ is uniformly bounded with respect to $\hat{\dr},$ the last term obviously is bounded from above with respect to $\hat{\dr}.$ Combine those arguments with \eqref{eq:aubinyau1} we have
\begin{equation}\label{eq:aubinyau2}
(\dt-\Delta)\log tr_{\hat{\dr}}\dr\leq C_{3}\Delta\Psi_{\rho}+C_{4}tr_{\dr}\hat{\dr}+\frac{C_{5}}{tr_{\hat{\dr}}\dr}\leq C_{3}\Delta\Psi_{\rho}+C_{6}tr_{\dr}\hat{\dr}.
\end{equation}
Now we consider the function $G(p,t):=t\log tr_{\hat{\dr}}\dr-A(\varphi'_{l,v,\e}-\de\log|\tilde{S}_{\tilde{E}}|^{2})+B\Psi_{\rho},$ it follows that
\begin{align}\label{eq:aubinyau3}
(\dt-\Delta)G\leq&\log tr_{\hat{\dr}}\dr+C_{6}t\;tr_{\dr}\hat{\dr}+A(-\dot{\varphi}'_{l,v,\e}
+n-tr_{\dr}(\dr'_{t,l,v,\e}-\de\Theta_{\tilde{E}}))\nonumber\\
&+(C_{3}t-B)\Delta\Psi_{\rho}\nonumber\\
\leq&\log tr_{\hat{\dr}}\dr-A\log\frac{\dr^{n}}{\hat{\dr}^{n}}+A\sum_{k}a_{k}\log(|S_{k}|_{k}^{2}+\e^{2})+(C_{3}t-B)\Delta\Psi_{\rho}\nonumber\\
&+tr_{\dr}(C_{6}t\hat{\dr}-A\dr'_{t,l,v,\e,\de})+C_7\nonumber\\
\leq&tr_{\dr}((1+C_{6}t)\hat{\dr}-A\dr'_{t,l,v,\e,\de})+(C_{3}t-B)\Delta\Psi_{\rho}+C_8
\end{align}
where the last inequality follows from the property of logarithmic function again and $C_8$ is sufficiently large depending on $A$ and $\delta, T'$. Considering that $\Psi_{\rho}$ is \color{black}{$\theta$}\color{black}{}-PSH, for $t\leq T'$ we can choose $B\geq C_{3}T'$ so that $(C_{3}t-B)\Delta\Psi_{\rho}\leq 0.$ On the other hand, by Assumption \ref{changinghermitianmetricsass} \color{black}{(iii)}\color{black}{}  it follows that $\dr'_{t,l,v,\e,\de}\geq c_{\de}\hat{\dr}$ for some $c_{\de }>0.$ Thus by choosing large enough $A$ we have
$$(\dt-\Delta)G\leq-tr_{\dr}\hat{\dr}+C_{9}.$$  Note that $G$ is bounded from above at $t=0$ by assumptions and tends to $-\infty$ near $\tilde{E}$ so we may assume the supremum of $G$ is attained at $(p_{0},t_{0})$ for $p_{0}\in X'\setminus\tilde{E}$ and $t_{0}>0,$ thus by the maximum principle it follows from above that $tr_{\dr}\hat{\dr}(p_{0},t_{0})\leq C_{9}.$ Then at $(p_{0},t_{0})$ 
it follows that
\begin{align}\label{eq:tr-max}
G(p_{0},t_{0})&\leq t_{0}\log(tr_{\dr}\hat{\dr})^{n-1}(\frac{\dr^{n}}{\hat{\dr}^{n}})-A(\varphi'_{l,v,\e}-\de\log|\tilde{S}_{\tilde{E}}|^{2})+B\Psi_{\rho}\nonumber\\
&\leq C_{9}+t_{0}(\dot{\varphi}'_{l,v,\e}+\sum_{k}a_{k}\log(|S_{k}|_{k}^{2}+\e^{2})+C_{10})\leq C_{11},
\end{align}
where the last inequality follows from Lemma \ref{lem-t-dev}. Thus for any $(p,t)\in(X'\setminus\tilde{E})\times(0,T']$ it follows that
$$G(p,t):=t\log tr_{\hat{\dr}}\dr-A(\varphi'_{l,v,\e}-\de\log|\tilde{S}_{\tilde{E}}|^{2})+B\Psi_{\rho}\leq C_{11},$$ which together with Lemma \ref{lem-t-dev} implies the upper bound in \eqref{eq:C2} of the Theorem, namely
\begin{equation}\label{eq:C2-upper}
tr_{\hat{\dr}}\dr\leq  \frac{e^{c_1 /t}}{|\tilde{S}_{\tilde{E}}|^{c_2/t}}\end{equation}
for constants $c_1, c_2$ depending only on $\de, T'$.  On the other hand, as
\begin{equation}\label{eq:det}
\frac{\dr^{n}}{\hat{\dr}^{n}}\geq\prod_{k}(|S_{k}|_{k}^{2}+\e^{2})^{a_{k}}e^{\dot{\varphi}'_{l,v,\e}-C_{12}},
\end{equation}
it follows from \eqref{eq:t-dev bd} and \eqref{eq:C2-upper} that
\begin{equation}\label{eq:C2-lower}
c_{3} t^n  |\tilde{S}_{\tilde{E}}|^{(c_2/t)+\de}\prod_{k}(|S_{k}|_{k}^{2}+\e^{2})^{a_{k}}e^{-\frac{c_{1}}{t}}\leq tr_{\hat{\dr}}\dr
\end{equation}
for a constant $c_3$ depending only on $\de, T'$ which completes the Laplacian estimate \eqref{eq:C2} and the proof of the Theorem.
\end{proof}
\begin{remark}\label{rem-C2}
It is possible to eliminate the factor $\prod_{k}(|S_{k}|_{k}^{2}+\e^{2})^{a_{k}}$ in \eqref{eq:C2} if we can replace the conic approximation method
in \cite{GP} by other approximation using the upper bound of the bisectional curvature. Then we may use Chern-Lu Inequality to get rid of that factor. We may consider this approximation in the future.
\end{remark}

\subsubsection{higher order derivative estimates on $ \varphi'_{l,v,\e}$}

By the bounds in Lemma \ref{lem-t-dev} and Theorem \ref{thm-C2} we may have

\begin{theorem}\label{thm-localequivalence}
For any compact set $K\subset (X'\setminus\tilde{E})$ there exist constants $C(m,K,T')$ such that on $K\times(0,T']$ we have
\begin{equation}\label{eq:-localequivalence}
 C(m,K,T')^{-1}\theta \leq \dr'_{t,l,v,\e}+\ddb\varphi'_{l,v,\e} \leq C(m,K,T') \theta
\end{equation}

\end{theorem}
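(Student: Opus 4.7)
The plan is to combine the two-sided Laplacian estimate of Theorem \ref{thm-C2}, the time-derivative bound of Lemma \ref{lem-t-dev}, and the Monge-Amp\`ere equation \eqref{eq:ma flow 4} itself; the parameter $m$ in the constant $C(m,K,T')$ should be read as a positive lower bound for $t$, since the estimates above deteriorate as $t\to 0^+$. Throughout the argument, $K$ must in addition be disjoint from $\bigcup_i D_i \cup \bigcup_j E_j$ for the claimed equivalence to be meaningful, since the approximate solution is only smooth off $\bigcup_i D_i$ and behaves conically along $\bigcup_j E_j$.

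For the upper bound, I would first note that on any compact $K\subset X'\setminus\tilde{E}$, the section $|\tilde{S}_{\tilde{E}}|$ is bounded below by a positive constant depending only on $K$. Once $K$ is also disjoint from $\bigcup_i D_i\cup\bigcup_j E_j$, the background form $\hat{\dr}$, which is built from $\theta$ by adding terms involving only these divisors, is uniformly equivalent to $\theta$ on $K$. Combining these with the upper bound in \eqref{eq:C2} applied on $K\times[m,T']$ gives $\mathrm{tr}_{\hat{\dr}}\dr\leq C(m,K,T')$ and hence $\mathrm{tr}_{\theta}\dr\leq C(m,K,T')$, which yields $\dr\leq C(m,K,T')\,\theta$.

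For the lower bound, I would rewrite the Monge-Amp\`ere equation \eqref{eq:ma flow 4} as
\begin{equation*}
\dr^{n} \;=\; e^{\dot{\varphi}'_{l,u,v,\tilde{\e}}}\cdot\frac{\Omega'\prod_{k}(|S_{k}|_{k}^{2}+\e_{k}^{2})^{a_{k}}}{\prod_{i}|S_{i}|_{i}^{2}\log^{2}|S_{i}|_{i}^{2}\cdot\prod_{j}(|S_{j}|_{j}^{2}+\e_{j}^{2})^{b_{j}}}.
\end{equation*}
On $K\times[m,T']$, Lemma \ref{lem-t-dev} yields a lower bound on $\dot{\varphi}'_{l,u,v,\tilde{\e}}$ (again using that $|\tilde{S}_{\tilde{E}}|$ is bounded below on $K$), and each factor on the right is trapped between positive constants by compactness, giving $\dr^{n}\geq c(m,K,T')\,\theta^{n}$. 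Diagonalizing $\dr$ with respect to $\theta$ pointwise, its eigenvalues $\lambda_{1},\dots,\lambda_{n}$ then satisfy $\lambda_{i}\leq C$ and $\prod_{i}\lambda_{i}\geq c$, forcing each $\lambda_{i}\geq c/C^{n-1}$, so $\dr\geq c'(m,K,T')\,\theta$.

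The main obstacle, or rather the reason the constant cannot be made uniform down to $t=0$, is the $e^{C_{\de}/t}$ factor appearing in both the upper Laplacian estimate \eqref{eq:C2} and the time-derivative estimate \eqref{eq:t-dev bd}. This is consistent with the fact that the initial potential lies only in $PSH\cap L^{\infty}_{\mathrm{loc}}$ rather than being smooth, so smoothing of the flow can only be expected locally in time; once this is accepted, the eigenvalue trapping above is essentially automatic.
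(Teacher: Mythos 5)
Your argument is essentially the paper's intended proof: the paper derives Theorem~\ref{thm-localequivalence} directly from Lemma~\ref{lem-t-dev} and Theorem~\ref{thm-C2} together with the equation \eqref{eq:ma flow 4}, and your eigenvalue-trapping step (upper bound on $\mathrm{tr}_{\theta}\dr$ from the Laplacian estimate, lower bound on $\dr^{n}$ from the time-derivative bound via the \MA\ equation) is exactly what is being invoked. Your observations about the role of the constant $m$ and the need for $K$ to avoid the divisor loci are reasonable clarifications of the (terse) statement rather than departures from the paper's approach.
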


Finally, by standard Evans-Krylov estimates (cf. \cite{ST3,Yau1}), we can establish the local high order estimates in any compact set $K\subset X'\setminus\tilde{E}:$
\begin{theorem}\label{thm-high order}
For any compact set $K\subset (X'\setminus\tilde{E})\times(0,T']$ there exist constants $C(m,K,T')$ such that
\begin{equation}\label{eq:high order}
|\varphi'_{l,v,\e}|_{C^{m}(K)}\leq C(m,K,T').
\end{equation}
\end{theorem}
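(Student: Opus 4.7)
The plan is to upgrade the uniform metric equivalence of Theorem \ref{thm-localequivalence} and the time-derivative bound of Lemma \ref{lem-t-dev} to uniform $C^m$ bounds by a standard parabolic bootstrap, exploiting that on a compact $K\subset X'\setminus\tilde{E}$ equation \eqref{eq:ma flow 4} is a uniformly parabolic complex \MA\ equation with smooth coefficients bounded uniformly in the parameters $l,u,v,\tilde{\e}$.

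First I would cover $K$ by finitely many coordinate charts $U_\alpha$ on which $\dr'_{t,u,v,\tilde{\e}_{j}}$ admits a smooth local potential $\Phi_\alpha$ and on which all the factors $|S_i|_i^2$, $|S_j|_j^2$, $|S_k|_k^2$, $|\tilde{S}|^2$ and $\Omega'$ are smooth with uniform (in the parameters) $C^m$ bounds. On such a chart \eqref{eq:ma flow 4} reduces to a parabolic \MA\ equation
\begin{equation}\nonumber
\dt w = \log\det\bigl(g_{\alpha\bar\beta}+w_{\alpha\bar\beta}\bigr) + f,
\end{equation}
for $w:=\Phi_\alpha+\varphi'_{l,u,v,\tilde{\e}}$, where $g_{\alpha\bar\beta}$ and $f$ are smooth and uniformly $C^m$-bounded on $K$. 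By \eqref{eq:-localequivalence} the Hessian $g_{\alpha\bar\beta}+w_{\alpha\bar\beta}$ has uniform two-sided bounds on $K$, giving uniform parabolic ellipticity constants independent of $l,u,v,\tilde{\e}$.

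With uniform ellipticity and uniform $C^0$ bounds on $\varphi'_{l,u,v,\tilde{\e}}$ and $\dot\varphi'_{l,u,v,\tilde{\e}}$ in hand, I would invoke the parabolic Evans-Krylov-Safonov theory for concave fully nonlinear parabolic equations (as employed in \cite{ST3}) to obtain a uniform interior parabolic $C^{2,\alpha}$ estimate on any $K'\Subset K\times(0,T']$. Once $C^{2,\alpha}$ is in hand, differentiating \eqref{eq:ma flow 4} gives a linear uniformly parabolic equation with $C^\alpha$ coefficients for any first-order spatial or time derivative of $w$, and standard interior parabolic Schauder estimates then boost the bound to $C^{3,\alpha}$. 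A straightforward induction iterates this to deliver uniform $C^{m,\alpha}$ bounds for every $m$, proving \eqref{eq:high order}. The only mildly delicate point is ensuring that the Evans-Krylov step passes uniformly through the regularization, but since the regularization modifies $f$ and $\Phi_\alpha$ only in a controlled smooth manner on $K$, all constants in the estimate depend solely on the uniform bounds above, the local geometry of $K$ and on $T'$.
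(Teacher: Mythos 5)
Your proposal is correct and takes essentially the same route as the paper, whose proof is just a citation of ``standard Evans--Krylov estimates'' (cf.\ \cite{ST3,Yau1}): you have spelled out precisely the standard localization, uniform ellipticity from Theorem \ref{thm-localequivalence}, parabolic Evans--Krylov, and Schauder bootstrap that this citation is meant to invoke.
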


\subsubsection{Estimates when $\varphi_{0}\in C^{\infty}(X')$}
In the following subsubsection we assume in addition that $\varphi_{0}\in C^{\infty}(X')$.  Note that by absorbing the smooth form $\ddb \pi^* \varphi_{0}$ into $\pi^* \omega_0$, we will assume, without loss of generality, that in fact $\varphi_{0}=0$.
Thus we simply take $\varphi_{0, l}=0$ for all $l$ in the previous subsections.  In this case we will establish estimates for $\varphi'_{l,v,\e}$ which are uniform on $K\times[0, T']$ for any $K\subset \subset X'\setminus\tilde{E}$.

\begin{lemma}\label{lem-upper 1new}[modified Lemma \ref{lem-upper 1}]
 There exists a constant $C$ depending only on $\delta, T'$ such that
\begin{equation}\label{eq:upper-1new}
\sup\varphi'_{l,v,\e}\leq t(C-\sum_{k}a_{k}\log(|S_{k}|_{k}^{2}+\e^{2})).
\end{equation}
on $(X'\setminus \tilde{E}) \times[0,T']$.
\end{lemma}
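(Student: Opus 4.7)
The plan is to carry over the proof of Lemma~\ref{lem-upper 1} almost verbatim, exploiting the improved initial data afforded by the hypothesis $\varphi_{0}=0$. Under this hypothesis we take $\varphi_{l,0}=0$ for all $l$ and suppress the $l$ index, so that the initial condition in \eqref{eq:ma flow 4} reduces to $\varphi'_{u,v,\tilde{\e}}(0)=-\eta\sum_{j}\mathcal{F}(|S_{j}|_{j}^{2},1-b_{j},\e_{j}^{2})$, which is non-positive since the integrand in the definition~\eqref{eq:conic-reg} of $\mathcal{F}$ is everywhere non-negative. This improved sign at $t=0$ is precisely what allows the absolute constant $C$ of Lemma~\ref{lem-upper 1} to be upgraded to a quantity of the form $C_{1}t$.

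Following the proof of Lemma~\ref{lem-upper 1} in detail, first introduce the auxiliary function
\[
\phi_{u,v,\tilde{\e}}(t) := \varphi'_{u,v,\tilde{\e}}(t) + t\sum_{k}a_{k}\log(|S_{k}|_{k}^{2}+\e_{k}^{2}),
\]
which satisfies the analog of \eqref{eq:ma flow 5} with background form $\dr''_{t,u,v,\tilde{\e}}\leq\dr'_{t,u,v,\tilde{\e}_{j}}+Ct\theta$. Apply the Omori-Yau maximum principle exactly as in the proof of Lemma~\ref{lem-upper 1}, combined with the local expansion~\eqref{eq:bg-expansion} of the background form from Assumption~\ref{changinghermitianmetricsass}. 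This yields at an approximating maximizer the pointwise estimate
\[
\dt\phi_{u,v,\tilde{\e}} \leq \log\bigl(C(v+t)^{n_{1}}(1+t)^{n_{2}}\bigr),
\]
where $n_{1}$ counts the number of log canonical divisors through the point in question and $n_{2}=n-n_{1}$.

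The new observation is that this right hand side is uniformly bounded above on $[0,T']$: when $v+t\leq 1$ we have $\log((v+t)^{n_{1}})\leq 0$, while for $v+t\in(1,T'+1]$ we have $\log((v+t)^{n_{1}})\leq n\log(T'+1)$, and likewise $\log((1+t)^{n_{2}})\leq n\log(1+T')$. Thus there is a constant $C_{1}=C_{1}(T')$ with $\dt\sup_{X'\setminus\bigcup_{i}D_{i}}\phi_{u,v,\tilde{\e}}(\cdot,t)\leq C_{1}$ in the Omori-Yau sense. Integrating from $0$ to $t$ and using $\sup\phi_{u,v,\tilde{\e}}(\cdot,0)\leq 0$ then yields $\sup\phi_{u,v,\tilde{\e}}(\cdot,t)\leq C_{1}t$, which rearranges to the claimed bound~\eqref{eq:upper-1new}.

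The main obstacle to this program is simply the applicability of Omori-Yau in the form needed above, which requires $\phi_{u,v,\tilde{\e}}$ to be a priori bounded above on $(X'\setminus\bigcup_{i}D_{i})\times[0,T']$; this is immediate from the membership $\varphi'_{u,v,\tilde{\e}}\in L^{\infty}(X'\setminus\bigcup_{i}D_{i}\times[0,T_{0}))$ provided by Theorem~\ref{thmLZ}, together with the observation that $\log(|S_{k}|_{k}^{2}+\e_{k}^{2})$ is uniformly bounded above on $X'$. Beyond this, the argument differs from Lemma~\ref{lem-upper 1} only in that we crudely bound the time integral of the upper estimate on $\dt\phi_{u,v,\tilde{\e}}$ by $C_{1}t$ rather than tracking a more precise $t$-dependence, which is sufficient precisely because the initial data $\phi_{u,v,\tilde{\e}}(\cdot,0)$ is now non-positive.
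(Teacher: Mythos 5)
Your proof is correct and follows essentially the same route as the paper's: pass to $\phi_{u,v,\tilde{\e}}=\varphi'_{u,v,\tilde{\e}}+t\sum_{k}a_{k}\log(|S_{k}|_{k}^{2}+\e_{k}^{2})$, note that the initial datum is now non-positive, observe that the evolution equation for $\phi$ has right-hand side uniformly bounded above on $[0,T']$ at a maximizing point, and integrate the maximum principle to get $\phi\leq C_1 t$. You spell out the explicit bound via the local expansion \eqref{eq:bg-expansion} and the Omori–Yau boundedness hypothesis (supplied by Theorem~\ref{thmLZ}) rather than stating them as in the original, but these are the same steps the paper relies on implicitly.
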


\begin{proof}
This basically follows from a slight variant on the proof of Lemma \ref{lem-upper 1}.  Notice that the RHS in \eqref{eq:ma flow 5} is bounded above uniformly on $(X'\setminus \tilde{E}) \times[0, T']$.   On the other hand, since $\varphi'_{l,v,\e}(0)=0$ we may also have $\sup_M \phi_{l,v,\e}(0) \leq 0$ in that proof and it follows from the maximum principle that $\phi_{l,v,\e}\leq Ct$ there.  The Lemma then follows from the fact that $\varphi_{l,v,\e}:=\phi_{l,v,\e}-t \sum_{k}a_{k}\log(|S_{k}|_{k}^{2}+\e^{2})$ as in the proof of Lemma \ref{lem-upper 1}.

\end{proof}

Next we want to modify the upper and lower bound in Lemma \ref{lem-t-dev}.   We begin with the upper bound

\begin{lemma}\label{lem-t-dev new}[modification of upper bound in Lemma \ref{lem-t-dev}]
There exist $C>0$ depending only on $\de, T'$ such that we have
\begin{equation}\label{eq:t-dev bdnew}
\dot{\varphi}'_{l,v,\e}\leq (C-\sum_{k}a_{k}\log(|S_{k}|_{k}^{2}+\e^{2}))+n-\de\log|\tilde{S}_{\tilde{E}}|^{2}
\end{equation}
on $(X'\setminus \tilde{E})  \times[0,T']$.
\end{lemma}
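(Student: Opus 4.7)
The plan is to carry out the maximum principle argument from the proof of the upper bound in Lemma \ref{lem-t-dev}, with two modifications that take advantage of the smoothness of $\varphi_0 = 0$: a direct computation giving a uniform initial bound on $\dot\varphi'(0)$, and replacement of the $t$ factor in the auxiliary function by $t+\e_0$ for a small parameter $\e_0>0$, which eliminates the $1/t$ singularity present in the original Lemma.

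First, I would compute $\dot\varphi'_{u,v,\tilde\e}(0)$ directly from the flow equation. With $\varphi_{l,0}=0$, the initial evolving metric simplifies as
\begin{equation*}
\dr'_{0,u,v,\tilde\e} + \ddb\varphi'_{u,v,\tilde\e}(0) = \pi^{*}\dr_{0}+u\theta-v\sum_{i}\ddb\log\log^{2}|S_{i}|_{i}^{2},
\end{equation*}
a Carlson-Griffiths type form whose $n$-th power precisely cancels the $\prod_{i}|S_{i}|_{i}^{2}\log^{2}|S_{i}|_{i}^{2}$ factor in the numerator of \eqref{eq:ma flow 4}, so the only unbounded contribution to $\dot\varphi'(0)$ comes from the $\prod_{k}(|S_{k}|_{k}^{2}+\e_{k}^{2})^{a_{k}}$ in the denominator. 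This yields $\dot\varphi'(0)\leq C_{0}-\sum_{k}a_{k}\log(|S_{k}|_{k}^{2}+\e_{k}^{2})$, equivalently $\dot\phi(0)\leq C_{0}$ uniformly, where $\phi:=\varphi'+t\sum_{k}a_{k}\log(|S_{k}|_{k}^{2}+\e_{k}^{2})$ is the shifted quantity from Lemma \ref{lem-upper 1}'s proof, satisfying \eqref{eq:ma flow 5}.

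Next, I would choose $\e_{0}>0$ small and set $\delta':=\delta\e_{0}\in(0,\delta)$, so that Assumption \ref{changinghermitianmetricsass} applies for $\delta'$ with some $c_{\delta'}>0$ and $\e_{0}$ is smaller than $c_{\delta'}/(2C_{\eta})$, where $C_{\eta}$ is the uniform upper bound $\eta_{t}\leq C_{\eta}\hat\dr$ (with $\eta_{t}=\dot{\omega''_{t}}$; this bound holds since $-\sum_{i}\ddb\log\log^{2}|S_{i}|_{i}^{2}=\hat\dr-\theta$, and $\chi$ together with the terms $\tfrac{|S_{k}|^{2}\Theta_{k}}{|S_{k}|^{2}+\e_{k}^{2}}-\tfrac{DS_{k}\wedge\overline{DS}_{k}}{(|S_{k}|^{2}+\e_{k}^{2})^{2}}$ are bounded above relative to $\theta\leq\hat\dr$). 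Consider the shifted auxiliary function
\begin{equation*}
H := (t+\e_{0})\dot\phi - (\phi - \delta'\log|\tilde{S}_{\tilde{E}}|^{2}) - n(t+\e_{0}).
\end{equation*}
The computation of $(\partial_{t}-\Delta_{\dr})H$ is identical to the original Lemma's, but with $t\mapsto t+\e_{0}$, giving
\begin{equation*}
(\partial_{t}-\Delta_{\dr})H = -\mathrm{tr}_{\dr}\bigl[(\dr'_{0,u,v,\tilde\e}-\delta'\Theta_{\tilde{E}}) - \e_{0}\eta_{t}\bigr] \leq 0
\end{equation*}
by the choice of $\e_{0}$. Applying Theorem \ref{thm-lb1} with parameter $\delta'$ gives $\varphi'\geq\delta'\log|\tilde{S}_{\tilde{E}}|^{2}+C_{\delta'}$, controlling $\phi-\delta'\log|\tilde{S}_{\tilde{E}}|^{2}$ from below near $\tilde{E}$; a compact exhaustion of $X'\setminus\tilde{E}$ (combined, if necessary, with an infinitesimal perturbation of $\delta'$ to force $H\to-\infty$ near $\tilde{E}$) then justifies $\sup H\leq\sup_{t=0}H$. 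Since
\begin{equation*}
H(x,0)=\e_{0}\dot\phi(0)+\eta\sum_{j}\mathcal{F}(|S_{j}|_{j}^{2},1-b_{j},\e_{j}^{2})+\delta'\log|\tilde{S}_{\tilde{E}}|^{2}-n\e_{0}\leq \e_{0}C_{0}+C-n\e_{0}\leq C_{1}
\end{equation*}
uniformly (using boundedness of $\mathcal{F}$ and $\log|\tilde{S}_{\tilde{E}}|^{2}\leq 0$), we conclude $H\leq C_{1}$ on $(X'\setminus\tilde{E})\times[0,T']$.

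Finally, rearranging $H\leq C_{1}$ and invoking Lemma \ref{lem-upper 1new} in the form $\phi\leq tC_{2}$, then dividing by $(t+\e_{0})\geq\e_{0}$ and using $-\delta'\log|\tilde{S}_{\tilde{E}}|^{2}\geq 0$ yields
\begin{equation*}
\dot\phi\leq \frac{tC_{2}+n(t+\e_{0})+C_{1}-\delta'\log|\tilde{S}_{\tilde{E}}|^{2}}{t+\e_{0}}\leq C_{3}-\frac{\delta'}{\e_{0}}\log|\tilde{S}_{\tilde{E}}|^{2}+n = C_{3}-\delta\log|\tilde{S}_{\tilde{E}}|^{2}+n,
\end{equation*}
and unpacking $\dot\phi=\dot\varphi'+\sum_{k}a_{k}\log(|S_{k}|_{k}^{2}+\e_{k}^{2})$ gives the claimed bound. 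The main obstacle I expect is rigorously justifying the maximum principle near $\tilde{E}$: since $\dot\varphi'$ itself is genuinely unbounded there by the estimate in Theorem \ref{thm-C2}, one must verify that the specific linear combination defining $H$ is dominated by boundary/initial values, which is the role of the delicate choice $\delta'=\delta\e_{0}$ and the compact exhaustion.
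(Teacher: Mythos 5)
Your overall strategy of eliminating the $1/t$ singularity by shifting $t\mapsto t+\e_{0}$ is natural, and your observation that $\varphi_0 = 0$ makes $\dot\phi(0)$ uniformly bounded above is correct. However, the evolution inequality for $H$ contains a gap that cannot be fixed by any \emph{fixed} choice of $\e_{0}$: the positivity requirement
\begin{equation*}
\bigl(\dr'_{0,u,v,\tilde{\e}}-\delta'\Theta_{\tilde{E}}\bigr)-\e_{0}\eta_{t}\geq 0
\end{equation*}
fails near the log canonical divisors $D_{i}$ once $\e_{0}>v$. Indeed, $\eta_{t}=\dot{\dr''_{t}}$ contains the full-strength Carlson--Griffiths term $-\sum_{i}\ddb\log\log^{2}|S_{i}|_{i}^{2}$, whose coefficient in the normal direction to $D_{i}$ is $\sim\frac{1}{|z^{1}|^{2}\log^{2}|z^{1}|^{2}}$ with prefactor $1$, while the corresponding CG positivity of $\dr'_{0,u,v,\tilde{\e}}-\delta'\Theta_{\tilde{E}}$ at $t=0$ is only of size $v$ (the local model \eqref{eq:bg-expansion} has coefficient $(t+v)$). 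All other terms in the combination are bounded near $D_{i}$, so the combination tends to $-\infty$ along $D_{i}$ whenever $\e_{0}>v$. Since $v$ is a perturbation parameter that must be sent to zero, you cannot select $\e_{0}$ independently of $v$; and if $\e_{0}$ depends on $v$ the final bound deteriorates as $v\to 0$. Your phrase ``Assumption \ref{changinghermitianmetricsass} applies for $\delta'$ with some $c_{\delta'}>0$'' implicitly assumes a bound $\dr'_{0,u,v,\tilde{\e}}-\delta'\Theta_{\tilde{E}}\geq c_{\delta'}\hat{\dr}$ with $c_{\delta'}$ independent of $v$, which is not what the assumption gives at $t=0$.

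The paper avoids this by a different modification: rather than shifting $t\mapsto t+\e_{0}$, it keeps the factor $t$ and replaces the constant $\de\log|\tilde{S}_{\tilde{E}}|^{2}$ in the barrier by $t\de\log|\tilde{S}_{\tilde{E}}|^{2}$ and, crucially, works with $\psi:=\varphi'+\eta\sum_{j}\mathcal{F}$ so that $\psi(0)=0$. The evolving quantity then vanishes identically at $t=0$, no initial estimate on $\dot{\psi}(0)$ is needed, and the parabolic inequality only requires the form $\sigma_{0}-t\de\Theta_{\tilde{E}}$ to be semi-positive for $t\in[0,T']$ — a condition that does hold uniformly in $v$, because $\sigma_{0}=\pi^{*}\dr_{0}+u\theta-v\sum_{i}\ddb\log\log^{2}|S_{i}|_{i}^{2}$ has its CG terms entering with the \emph{correct} sign. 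The $1/t$ singularity is then absorbed because the $\psi$-analogue of Lemma \ref{lem-upper 1new} gives $\psi\leq t(C-\sum_{k}a_{k}\log(|S_{k}|_{k}^{2}+\e_{k}^{2}))$, so the numerator $\psi-t\de\log|\tilde{S}_{\tilde{E}}|^{2}+nt$ is $O(t)$.
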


\begin{proof}

Define \begin{align}\psi_{l,v,\e}&:= \varphi'_{l,v,\e}+\eta\sum_{j}\mathcal{F}(|S_{j}|_{j}^{2},1-b_{j},\e^{2})\\ \sigma_{t,l,v,\e}&:=\dr'_{t, l,v,\e}- \eta\ddb \sum_{j}\mathcal{F}(|S_{j}|_{j}^{2},1-b_{j},\e^{2})\nonumber\end{align}

Then $\psi_{l,v,\e}$ solves \eqref{eq:ma flow 4} but with initial condition $\psi_{l,v,\e}(0)=0$ and with $\dr'_{t, l,v,\e}$ replaced with $\sigma_{t, l,v,\e}$ which we will denote below simply as $\sigma$.  We have
\begin{equation}\label{eq:t-devnew}
\dt\dot{\psi}'_{l,v,\e}=\Delta_{\sigma}\dot{\psi}'_{l,v,\e}+tr_{\sigma}(\chi-\sum_{i}\ddb\log\log^{2}|S_{i}|_{i}^{2}).
\end{equation}
Thus we have the following on $(X'\setminus \tilde{E}) \times[0, T']$  assuming $v$ is sufficiently small 
\begin{equation}\begin{split}
(\dt-\Delta_{\sigma})(t\dot{\psi}'_{l,v,\e}&-(\psi'_{l,v,\e}-t\de\log|\tilde{S}_{\tilde{E}}|^{2})-nt)
\\
&=-tr_{\sigma}(\sigma_{t,l,v,\e} + t \de \ddb \log|\tilde{S}_{\tilde{E}}|^{2})+\de\log|\tilde{S}_{\tilde{E}}|^{2} \leq 0.\end{split}\end{equation}
where the last inequality follows by Assumption \ref{changinghermitianmetricsass} \color{black}{(ii)}\color{black}{} and the assumption $\log|\tilde{S}_{\tilde{E}}|^{2} <0$.

Now note that the quantity $Q:=t\dot{\psi}'_{l,v,\e}-(\psi'_{l,v,\e}-t\de\log|\tilde{S}_{\tilde{E}}|^{2})-nt$  is  bounded above by $0$ at $t=0$.   Also, the quantity approaches $-\infty$ towards the divisors at all positive times.  Thus by a maximum principle argument we may conclude that $Q(x, t) \leq 0$ on $(X'\setminus \tilde{E}) \times[0, T']$ and thus
$$\dot{\psi}'_{l,v,\e}\leq \frac{(\psi'_{l,v,\e}-t\de\log|\tilde{S}_{\tilde{E}}|^{2})+nt)}{t} \leq
(C_1-\sum_{k}a_{k}\log(|S_{k}|_{k}^{2}+\e^{2}))+n-\de\log|\tilde{S}_{\tilde{E}}|^{2}$$
for some constant $C_1$ depending only on $\delta, T'$ where in the last inequality we have used Lemma \ref{lem-upper 1new} together with the definition of $\psi_{l,v,\e}$.   The Lemma then follows as $\dot\psi_{l,v,\e}=\dot{\varphi}'_{l,v,\e}$.
\end{proof}

Next we modify the lower bound in Lemma \ref{lem-t-dev}.

\begin{lemma}\label{lem-t-dev-new}[modification of lower bound in Lemma \ref{lem-t-dev}]
There exist $C>0$ depending only on $\de, T'$  such that 
\begin{equation}\label{eq:t-dev bdnewnew}
\de\log|\tilde{S}_{\tilde{E}}|^{2}-C \leq\dot{\varphi}'_{l,v,\e}
\end{equation}
on $(X'\setminus \tilde{E}) \times[0, T']$
\end{lemma}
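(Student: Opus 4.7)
The plan is to mimic the proof of the lower bound in Lemma \ref{lem-t-dev}, but dropping the singular correction $-n\log t$, which is no longer needed once $\varphi_{0}=0$ is smooth and $\dot{\varphi}'(0)$ is pointwise finite. As in the modified upper bound (Lemma \ref{lem-t-dev new}), I will first pass to $\psi := \varphi'_{u,v,\tilde{\e}} + \eta\sum_{j}\mathcal{F}(|S_{j}|_{j}^{2},1-b_{j},\e_{j}^{2})$ and $\sigma_{t}:=\dr'_{t,u,v,\tilde{\e}}-\eta\ddb\sum_{j}\mathcal{F}(\cdot)$, so that $\psi(0)=0$, $\psi$ solves a MA flow relative to $\sigma_{t}$, and $\dot{\psi}=\dot{\varphi}'_{u,v,\tilde{\e}}$.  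Bounding $\dot{\psi}$ from below suffices.

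Next I consider the auxiliary function
$$
H^{-} := \dot{\psi} + A\bigl(\psi - \de\log|\tilde{S}_{\tilde{E}}|^{2}\bigr)
$$
for $A$ large. Combining $(\dt-\Delta_{\omega})\dot{\psi}=tr_{\omega}(\chi-\sum_{i}\ddb\log\log^{2}|S_{i}|^{2}_{i})$ with $(\dt-\Delta_{\omega})\psi=\dot{\psi}-n+tr_{\omega}\sigma_{t}$ and $(\dt-\Delta_{\omega})(-\de\log|\tilde{S}|^{2})=-\de\, tr_{\omega}\tilde{\Theta}$, and writing $\sigma_{t,\de}:=\sigma_{t}-\de\tilde{\Theta}$, which by Assumption \ref{changinghermitianmetricsass} is K\"ahler with $\sigma_{t,\de}\geq c_{\de}\hat{\sigma}$, I mimic the manipulations in \eqref{eq:lb-heat} — using the volume lower bound analogous to \eqref{eq:vol} and the AM--GM inequality $tr_{\omega}\sigma_{t,\de}\geq n(\sigma_{t,\de}^{n}/\omega^{n})^{1/n}$ — to obtain, for $A$ large,
$$
(\dt-\Delta_{\omega})H^{-} \;\geq\; \frac{An}{2}\left(\frac{\sigma_{t,\de}^{n}}{\omega^{n}}\right)^{\!1/n} - C_{\de}
$$
on $(X'\setminus\tilde{E})\times(0,T']$.

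Then I apply the maximum principle to $H^{-}$. Since $-\de\log|\tilde{S}|^{2}\to+\infty$ near $\tilde{E}$, $H^{-}\to+\infty$ there, so the infimum over $(X'\setminus\tilde{E})\times[0,T']$ is attained either at $t=0$ or at an interior point $(p_{0},t_{0})$ with $t_{0}>0$. At an interior minimum, $(\dt-\Delta_{\omega})H^{-}\leq 0$ forces $\omega^{n}\geq c\,\sigma_{t,\de}^{n}$ at $(p_{0},t_{0})$, which via the MA equation yields a lower bound on $\dot{\psi}(p_{0},t_{0})$; combined with the upper bound on $\psi$ from Lemma \ref{lem-upper 1new} and $\log|\tilde{S}|^{2}\leq 0$, this bounds $H^{-}(p_{0},t_{0})$ from below. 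At $t=0$, $\psi(0)=0$ and the equation gives $\dot{\psi}(0)$ explicitly as a log of volume ratios; using Lemma \ref{CGlemma}(4) applied to $\sigma_{0}$, we verify that $\dot{\psi}(0)-\de\log|\tilde{S}|^{2}$ is bounded below, so $H^{-}(0)$ is bounded below. Taking the worse of the two cases and eliminating the $A\psi$ contribution via Lemma \ref{lem-upper 1new} gives $\dot{\varphi}'\geq \de\log|\tilde{S}|^{2}-C$.

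The main obstacle is the $t=0$ boundary case: the initial volume form $\sigma_{0}^{n}$ has a Carlson--Griffiths degeneracy scaling like $v^{n_{1}}$ near a crossing of $n_{1}$ log canonical divisors, so that naively $\dot{\psi}(0)$ is not uniformly bounded below as $v\to 0$. Making the constant $C$ depend only on $\de$ and $T'$ requires a careful exploitation of the Carlson--Griffiths structure in Lemma \ref{CGlemma}(4), absorbing the log canonical volume factors into the $\prod_{i}|S_{i}|_{i}^{2}\log^{2}|S_{i}|_{i}^{2}$ numerator of the MA equation, and using the freedom in $A$ and the upper bound on $\psi$ to dominate the residual terms; this is the step where real care is needed.
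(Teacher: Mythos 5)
Your proposal takes the same route as the paper: evolve $Q:=\dot{\varphi}'_{u,v,\tilde{\e}}+A\big(\varphi'_{u,v,\tilde{\e}}-\de\log|\tilde{S}_{\tilde{E}}|^{2}\big)$ (your shift to $\psi=\varphi'+\eta\sum_j\mathcal{F}$ is cosmetic since $\dot\psi=\dot\varphi'$), note $Q\to+\infty$ near $\tilde{E}$, and split the maximum-principle argument into the cases $t_0>0$ and $t_0=0$. Your heat inequality and the treatment of an interior minimum match the paper's display \eqref{eq:lb-heatnew}: from $(\dt-\Delta)Q\leq 0$ one gets $\hat{\omega}^n\lesssim\dr^n$ there, hence via the MA equation and Lemma \ref{CGlemma}(4) a lower bound on $\dot{\varphi}'(x_0,t_0)$, and Theorem \ref{thm-lb1} then bounds the $A(\varphi'-\de\log|\tilde{S}|^{2})$ contribution.

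The step you flag as requiring ``real care'' is exactly the one the paper compresses into ``by (3.8) we have $Q(x,0)\geq C_\de$,'' and your diagnosis of why it is delicate is correct: at $t=0$ the background metric $\widetilde{\omega}$ carries the Carlson--Griffiths factor with coefficient $v$, so $\widetilde{\omega}^{n}\prod_i|S_i|_i^2\log^2|S_i|_i^2\sim v\cdot\theta^n$ deep in a cusp and $\dot{\varphi}'(0)\sim\log v\to-\infty$. What neither you nor the paper writes out is the compensation mechanism: since $\tilde{S}_{\tilde{E}}$ vanishes along $\bigcup_i D_i$, the term $-A\de\log|\tilde{S}_{\tilde{E}}|^{2}$ in $Q(\cdot,0)$ grows like $-A\de\log|z^1|^2$; splitting into the region $u|z^1|^2\log^2|z^1|^2\leq v$ and its complement and taking $A\de>1$ kills the leading logarithm in each case. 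If you carry out this bookkeeping you will find the resulting constant retains a dependence on $u$ (through the $u\theta$ term that keeps $\widetilde{\omega}$ nondegenerate on the exceptional locus), which is compatible with how Theorem \ref{thm-high ordernew} is actually used in \S4.2 — uniformity in $v,\tilde{\e}$ for each fixed $l$, $u=l^{-1}$ — but not literally with the phrase ``$C$ depending only on $\de,T'$.'' So your plan is sound and identical in structure to the paper's; the boundary estimate you acknowledge skipping is genuinely the substance of the lemma and still needs to be supplied.
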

\begin{proof}


The proof is basically the same as the proof of Lemma \ref{lem-t-dev} except that we evolve instead the quantity $Q:=\left(\dot{\varphi}'_{l,v,\e}+A(\varphi'_{l,v,\e}-\de\log|\tilde{S}_{\tilde{E}}|^{2})\right)$.
Note that by (3.8) we have $Q(x, 0) \geq C_\delta$ on $\tilde X$ for some constant $C_{\delta}$.   We also see that for all $t\in [0, T']$ we have $Q(x, t) \to \infty$ as $x$ approaches the divisor $\tilde{E}$.   Thus $Q$ attains a minimum on $(X'\setminus \tilde{E}) \times[0, T']$ at  some point $(x_0, t_0)$.  If $t_0=0$ then the Lemma follows from the previous observation.  If $t_0>0$ then similarly to the proof of Lemma \ref{lem-t-dev}, we have the following at $(x_0, t_0)$ where $A$ is a sufficiently large constant depending only on $T'$ and $\delta$ and $C_i$'s are some constants depending only on $T', \delta$ and where $\hat\omega$ is as in the statement of Theorem 3.7:

\begin{align}\label{eq:lb-heatnew}
0\geq &(\dt-\Delta_{\dr})\left(\dot{\varphi}'_{l,v,\e}+A(\varphi'_{l,v,\e}-\de\log|\tilde{S}_{\tilde{E}}|^{2})\right)\nonumber\\
=\;&tr_{\dr}(\chi-\sum_{i}\ddb\log\log^{2}|S_{i}|_{i}^{2})+A\log\frac{\dr^{n}\prod_{i}|S_{i}|_{i}^{2}\log^{2}|S_{i}|_{i}^{2}
\prod_{j}(|S_{j}|_{j}^{2}+\e^{2})^{b_{j}}}{\Omega'\prod_{k}(|S_{k}|_{k}^{2}+\e^{2})^{a_{k}}}\nonumber\\
&+\;Atr_{\dr}(\dr'_{t,l,v,\e}-\de\Theta_{\tilde{E}})-An\nonumber\\
\geq\;&tr_{\dr}(\chi-\sum_{i}\ddb\log\log^{2}|S_{i}|_{i}^{2}+A(\dr'_{t,l,v,\e}-\de\Theta_{\tilde{E}}))+A\log\frac{C_0\dr^{n}}{\hat{\omega}^n}\nonumber\\
&+\;-An-A\sum_{k}a_{k}\log(|S_{k}|_{k}^{2}+\e^{2})\nonumber\\
\geq\;&C_1tr_{\dr}(\hat{\omega})+A\log\frac{C(c_{\delta},\eta)\dr^{n}}{\hat{\omega}^n}-C_2\nonumber\\
\geq\;&C_3 \left( \frac{\hat{\omega}^n}{\dr^n} \right)^{1/n} -C_2\nonumber\\
\end{align}
   Thus we get $\hat{\omega}^n \leq C_4 \dr^n$ at $(x_0, t_0)$ and it follows from (3.8) and the definition of $\hat{\omega}$ in Theorem 3.7, that $\dot{\varphi}'_{l,v,\e}(x_0, t_0)$ and thus $Q(x_0, t_0)$ is bounded below by some constant $C_5$ by Theorem \ref{thm-lb1}.  The lemma then follows from Theorem \ref{thm-lb1}.

\end{proof}

\begin{lemma}\label{lemmtracenew}[modification of upper bound in Theorem \ref{thm-C2}] There exist a function $F(x)$ depending only on $T', \de$ such that  $$tr_{\hat{\dr}}\dr \leq F(x)$$ on $(X'\setminus \tilde{E}) \times[0,T']$ where $\hat{\dr}$ is as in Theorem \ref{thm-C2}.

\end{lemma}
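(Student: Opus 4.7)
The plan is to adapt the proof of the upper bound in Theorem \ref{thm-C2}, leveraging the fact that $\varphi_0=0$ is now smooth to drop the factor of $t$ that multiplied $\log tr_{\hat\omega}\dr$ in the auxiliary function there.  Specifically, I will work with the time-independent function
$$G := \log tr_{\hat\omega}\dr - A(\varphi'_{u,v,\tilde\e} - \delta\log|\tilde{S}_{\tilde{E}}|^{2}) + B\Psi_\rho,$$
where $\hat\omega$ is the background form from Theorem \ref{thm-C2}.  The same parabolic Aubin--Yau computation from the proof of Theorem \ref{thm-C2} together with Assumption \ref{changinghermitianmetricsass}, with $B\ge C_1'$ and $A$ large enough that $Ac_\delta\ge C_3+2$, will yield
$$(\partial_t - \Delta_\dr)G \le -2\,tr_\dr\hat\omega - A\dot\varphi + An.$$

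The key new step, which replaces the use of the pointwise bound on $\dot\varphi$ in the proof of Theorem \ref{thm-C2}, is the following algebraic absorption.  Using the flow equation \eqref{eq:ma flow 4} and the Carlson--Griffiths expansion of $\hat\omega^n$ (Lemma \ref{CGlemma}(4), extended through the conic part), I rewrite
$$\dot\varphi = \log(\dr^n/\hat\omega^n) - \sum_k a_k\log(|S_k|_k^{2}+\e_k^2) + O(1),$$
where the last two terms are bounded above since $a_k\ge 0$ and $|S_k|_k^{2}+\e_k^2$ is bounded.  Applying the elementary inequality $-A\log x \le 1/x + C_A$ to each eigenvalue of $\dr$ relative to $\hat\omega$ and summing gives
$$-A\log(\dr^n/\hat\omega^n) \le tr_\dr\hat\omega + nC_A,$$
so that the trace terms cancel in the differential inequality above, leaving the time-uniform estimate
$$(\partial_t - \Delta_\dr)G \le C_4$$
for a constant $C_4$ depending only on $T'$ and $\delta$.

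For the initial bound, observe that since $\varphi_{l,0}=0$ the initial metric collapses to $\dr(0) = \pi^*\dr_0 + u\theta - v\sum_i\ddb\log\log^2|S_i|_i^2$ --- the conic contribution in $\dr'_{0,u,v,\tilde{\e}_j}$ is cancelled by $\ddb\varphi'(0)=-\eta\ddb\sum_j\mathcal{F}$.  Writing $-\sum_i\ddb\log\log^2|S_i|_i^2 = \hat\omega - \theta - \eta\sum_j\ddb\mathcal{F}$ and using $\eta\sum_j\ddb\mathcal{F}\ge 0$, one deduces $\dr(0) \le C\hat\omega$ uniformly in the parameters, whence $G(\cdot,0)\le C_0$ uniformly.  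I will then apply the parabolic maximum principle to $\tilde G := G - C_4 t$ on the complete Carlson--Griffiths-type \ka\ manifold $(X'\setminus\bigcup_i D_i, \dr)$, whose completeness and curvature bounds come from Theorem \ref{thmLZ}.  A barrier perturbation $G \mapsto G + \epsilon\log|\tilde{S}_{\tilde{E}}|^{2}$ with $\epsilon$ large enough to force decay at $\tilde E$, followed by $\epsilon\to 0^+$, will yield $G \le C_0 + C_4 T'$ uniformly; unpacking via Theorem \ref{thm:upper-2} and boundedness of $\Psi_\rho$ then gives
$$\log tr_{\hat\omega}\dr(p,t) \le C_5 - A\delta\log|\tilde{S}_{\tilde{E}}(p)|^{2},$$
i.e., the claimed bound with $F(x) := e^{C_5}|\tilde{S}_{\tilde{E}}(x)|^{-2A\delta}$.

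The main obstacle I expect is the rigorous justification of the maximum principle step: a priori $\log tr_{\hat\omega}\dr$ could blow up near $\tilde E$ at a rate defeating the $A\delta\log|\tilde{S}_{\tilde{E}}|^{2}$ term --- Theorem \ref{thm-C2} only gives a $t^{-1}$-dependent bound as $t\to 0^+$, which is why the barrier-perturbation detour (or alternatively an Omori--Yau-type argument using the bounded geometry afforded by Theorem \ref{thmLZ}) seems necessary.  The core algebraic simplification --- replacing the time-derivative bound on $\dot\varphi$ with the log-absorption into $tr_\dr\hat\omega$ --- is robust and is the crucial ingredient that makes the time-uniform differential inequality possible.
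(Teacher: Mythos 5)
Your differential inequality step agrees with the paper's: both derive $(\partial_t-\Delta_\omega)G\leq -\operatorname{tr}_\omega\hat\omega+C$ for the auxiliary quantity $G$, using the same logarithmic absorption $-A\log(\omega^n/\hat\omega^n)\leq \operatorname{tr}_\omega\hat\omega+nC_A$ (the paper's line ``$\leq-\operatorname{tr}_\omega\hat\omega+C$'' in the computation rests on exactly this).  (A small imprecision in your write-up: the terms $-\sum_k a_k\log(|S_k|_k^{2}+\e_k^2)$ and $O(1)$ in the expansion of $\dot\varphi$ are bounded \emph{below}, not above; it is after multiplication by $-A$ that they become bounded above, which is what your absorption in fact uses.)  Where you genuinely depart from the paper is in the final maximum-principle step.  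The paper does not integrate the parabolic inequality in time; instead, it locates the space--time maximum $(p_0,t_0)$ of $G$, observes that either $t_0=0$ or the $-\operatorname{tr}_\omega\hat\omega$ term forces $\operatorname{tr}_\omega\hat\omega(p_0,t_0)\leq C$, and then bounds $G(p_0,t_0)$ via the determinant inequality $\operatorname{tr}_{\hat\omega}\omega\leq(\operatorname{tr}_\omega\hat\omega)^{n-1}\omega^n/\hat\omega^n$ together with the time-uniform upper bound on $\dot\varphi'$ from Lemma 3.15 and the lower bound on $\varphi'$ from Theorem 3.5, arranging a delicate cancellation of the $\log|\tilde S_{\tilde E}|^2$ contributions.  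You instead observe that, since $\varphi_0=0$ kills the conic correction in the initial metric, one has the \emph{uniform} initial bound $G(\cdot,0)\leq C_0$, and then simply discard the $-\operatorname{tr}_\omega\hat\omega$ term and integrate $(\partial_t-\Delta_\omega)(G-Ct)\leq 0$ directly.  Both arguments are valid (the justification of the maximum principle is identical in the two: for each fixed parameter set, $G$ is bounded above and tends to $-\infty$ towards $\tilde E$ by Theorem 3.6, so your $\epsilon$-barrier is optional), but your route is notably cleaner: it bypasses Lemma 3.15 and the AM--GM/eigenvalue estimate at the maximum point entirely, and it needs only the upper bound on $\varphi'$ (Lemma 3.13) when unpacking $G$, not the lower bound from Theorem 3.5.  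The trade-off is that your approach relies essentially on $\varphi_0\in C^\infty(X')$ to get the uniform initial bound, so it would not transfer to the general case treated in Theorem 3.7, whereas the paper's ``evaluate at the max point'' scheme is the one that does; in the present subsubsection where $\varphi_0=0$ by assumption, though, your shortcut is entirely legitimate and arguably preferable.
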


\begin{proof}

The proof is basically as the upper bound proof in Theorem \ref{thm-C2} except that  we consider instead the quantity $$G(p,t):=\log tr_{\hat{\dr}}\dr-A(\varphi'_{l,v,\e}-\de \log|\tilde{S}_{\tilde{E}}|^{2}-\sum_{i}\log\log^{2}|S_{i}|_{i}^{2})+B\Psi_{\rho}$$ for constants $A, B$ to be determined and depending only on $\delta, T'$.    In particular, $G(p,t)$ is bounded above on $X'\setminus \tilde{E}$ for each $t$.  Moreover, by the hypothesis and Theorem 3.5 for sufficiently large $A$ we have $G(p,0)\leq C(A)$ on $X'\setminus \tilde{E}$.

By the same calculations in Lemma \ref{lem-bisec lb} we may choose $A$ and $B$ sufficiently large, so that the following holds where throughout the proof $C_i$ will denote a constant depending only on $\de, T'$. 

\begin{align}\label{eq:aubinyau3new}
(\dt-\Delta)G\leq&C_{1}\;tr_{\dr}\hat{\dr}+A(-\dot{\varphi}'_{l,v,\e}
+n-tr_{\dr}(\dr'_{t,l,v,\e}-\de \Theta_{\tilde{E}})-\ddb\sum_{i}\ddb\log\log^{2}|S_{i}|_{i}^{2})\nonumber\\
&+(C_{2}-B)\Delta\Psi_{\rho}\nonumber\\
\leq&-A\log\frac{\dr^{n}}{\hat{\dr}^{n}}+A\sum_{k}a_{k}\log(|S_{k}|_{k}^{2}+\e^{2})+(C'_{}-B)\Delta\Psi_{\rho}\nonumber\\
&+tr_{\dr}(C_{1}\hat{\dr}-A(\dr'_{t,l,v,\e}-\de \Theta_{\tilde{E}}-\ddb\sum_{i}\ddb\log\log^{2}|S_{i}|_{i}^{2}))\nonumber\\
&+(C_{2}-B)\Delta\Psi_{\rho}+C_3\nonumber\\
\leq&-tr_{\dr}\hat{\dr}+C\nonumber\\
\end{align}
where in the last inequality we have used Assumption \ref{changinghermitianmetricsass} and the definition of $\hat\omega$ in Theorem \eqref{thm-C2}.  Note that for all $t\in [0, T']$ we have $G(x, t) \to -\infty$ as $x$ approaches $\tilde{E}$.   Thus $\max_{(X'\setminus \tilde{E}) \times[0, T']} G(x, t) = G(p_0, t_0)>0$ for some $(p_0, t_0)\in (X'\setminus \tilde{E}) \times [0, T']$.  Now if $t_0=0$ the Lemma holds.  If $t_0 >0$ we have $tr_{\dr}\hat{\dr}\leq C$ at $(p_0, t_0)$ by \eqref{eq:aubinyau3new}, and thus as in the proof of  Lemma \ref{lem-bisec lb} we conclude the following at $(p_0, t_0)$
\begin{align}\label{eq:tr-maxnew2}
G(p_{0},t_{0})&\leq \log(tr_{\dr}\hat{\dr})^{n-1}(\frac{\dr^{n}}{\hat{\dr}^{n}})-A(\varphi'_{l,v,\e}-\de \log|\tilde{S}_{\tilde{E}}|^{2}-\sum_{i}\log\log^{2}|S_{i}|_{i}^{2})+B\Psi_{\rho}\nonumber\\
&\leq C_4(\dot{\varphi}'_{l,v,\e}+\sum_{k}a_{k}\log(|S_{k}|_{k}^{2}+\e^{2}))\nonumber\\
&-[A(\varphi'_{l,v,\e}-\de \log|\tilde{S}_{\tilde{E}}|^{2}-\sum_{i}\log\log^{2}|S_{i}|_{i}^{2})+B\Psi_{\rho}]\\
&=: H_1(p_0, t_0) + H_2(x_0, t_0)\nonumber
\end{align}
where by Lemma \ref{lem-t-dev new} we have $H_1(p, t) \leq -(\de/3)  \log|\tilde{S}_{\tilde{E}}|^{2} +C_5$ on $(X'\setminus \tilde{E}) \times [0, T']$ and by Theorem \ref{thm-lb1} we have $H_2(p, t) \leq (2\de/3)  \log|\tilde{S}_{\tilde{E}}|^{2} +C_6$ on $(X'\setminus \tilde{E}) \times [0, T']$  and thus we may continue the estimate in \eqref{eq:tr-maxnew2} as

\begin{equation}\label{eq:tr-maxnew3}
 H_1(p_0, t_0) + H_2(p_0, t_0)\leq (\de/3)  \log|\tilde{S}_{\tilde{E}}|^{2} +C_5+C_6 \leq C_7
\end{equation}
  Combining \eqref{eq:tr-maxnew2} and \eqref{eq:tr-maxnew3} we conclude that $G\leq C_8$ on $(X'\setminus \tilde{E}) \times [0, T')$ and the Lemma  follows from the definition of $G$ and Lemma \ref{lem-upper 1new}, or alternately Lemma \ref{lem-upper 1}.

\end{proof}

 As in the previous section, we may deduce from Lemmas \ref{lem-t-dev-new} and \ref{lemmtracenew} that for any compact set $K\subset X'\setminus\tilde{E},\;t\in[0,T']$ we have the equivalence $$C^{-1}\theta\leq \omega_{t, l,v,\e}+\ddb \varphi'_{l,v,\e}\leq C \theta$$ where $C>0$ depends only on $T', K$.    Then as in Theorem \ref{thm-high order} we can establish the local high order estimates as in the following
\begin{theorem}\label{thm-high ordernew}
Suppose $\varphi_{0}\in C^{\infty}(X')$.   For any compact set $K\subset X'\setminus\tilde{E}$ there exist constants $C(m,K,T')$ such that
\begin{equation}\label{eq:high order new}
|\varphi'_{l,v,\e}|_{C^{m}(K\times[0,T'])}\leq C(m,K,T').
\end{equation}
\end{theorem}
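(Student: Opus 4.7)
The plan is to follow exactly the same route used to deduce Theorem \ref{thm-high order} from Lemma \ref{lem-t-dev} and Theorem \ref{thm-C2}, but now using the modified versions Lemmas \ref{lem-t-dev new}, \ref{lem-t-dev-new}, \ref{lemmtracenew} whose estimates are independent of $t$ and therefore extend cleanly up to $t=0$. The point is that smoothness of $\varphi_0$ eliminates the $e^{C/t}$ and $|\tilde S|^{2\delta/t}$ factors that appeared in the rough-initial-data setting, so all local bounds now hold uniformly on $K\times[0,T']$ rather than only on $K\times(0,T']$.

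First I would combine Lemmas \ref{lem-t-dev new} and \ref{lem-t-dev-new} with Lemma \ref{lemmtracenew} to produce, on any compact $K\subset X'\setminus\tilde E$, a two-sided equivalence
\[
C(K,T')^{-1}\theta \le \dr'_{t,u,v,\tilde{\e}_{j}}+\ddb\varphi'_{u,v,\tilde{\e}} \le C(K,T')\,\theta
\]
uniformly on $K\times[0,T']$. The upper bound on the trace is exactly Lemma \ref{lemmtracenew}; the lower bound on the volume form, which gives the reverse inequality in the usual way, is obtained from the uniform two-sided bound on $\dot\varphi'_{u,v,\tilde\e}$ coming from Lemmas \ref{lem-t-dev new}--\ref{lem-t-dev-new} via the Monge--Amp\`ere equation \eqref{eq:ma flow 4}, together with the fact that $|S_k|^2+\e_k^2$, $|S_j|^2+\e_j^2$, and $|S_i|^2\log^2|S_i|^2$ are all uniformly bounded away from $0$ on $K$ (since $K$ is compactly contained in the complement of $\tilde E$, hence bounded away from all exceptional divisors appearing in the equation).

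Next I would apply the local parabolic Evans--Krylov theorem to the complex Monge--Amp\`ere equation \eqref{eq:ma flow 4} on a slightly larger compact neighbourhood $K'\subset\subset X'\setminus\tilde E$ with $K\subset\operatorname{int}K'$. Since on $K'\times[0,T']$ the equation is uniformly parabolic (thanks to the metric equivalence above) and the RHS $\log(\,\cdot\,)/\Omega'$ is smooth with uniformly bounded coefficients (the factors involving $|S_i|_i^2\log^2|S_i|_i^2$, $|S_j|_j^2+\e_j^2$, and $|S_k|_k^2+\e_k^2$ are smooth and positive on $K'$), Evans--Krylov yields a uniform $C^{2,\alpha}(K\times[0,T'])$ bound on $\varphi'_{u,v,\tilde\e}$. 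The initial condition $\varphi'_{u,v,\tilde\e}(0) = -\eta\sum_j \mathcal F(|S_j|_j^2,1-b_j,\e_j^2)$ is smooth on $K$ with bounds uniform in $\tilde\e$ by the definition of $\mathcal F$, so the parabolic boundary regularity at $t=0$ is not an obstruction.

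Finally, with the uniform $C^{2,\alpha}$ bound in hand, standard parabolic Schauder theory applied inductively to successive derivatives of \eqref{eq:ma flow 4} gives the full $C^m(K\times[0,T'])$ estimate for every $m$, with constants depending only on $m,K,T'$. The main potential obstacle throughout is ensuring genuine independence from the parameters $u,v,\tilde\e$: this is guaranteed because Assumption \ref{changinghermitianmetricsass} and the structure of the approximating background form $\dr'_{t,u,v,\tilde\e_j}$ give bounds uniform in these parameters, and because $K$ is compactly contained in $X'\setminus\tilde E$ so that every $\e_j$- or $\e_k$-regularized weight is trapped between fixed positive constants on $K'$ independently of $\tilde\e$.
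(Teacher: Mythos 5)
Your proposal matches the paper's argument: the paper derives the same uniform two-sided metric equivalence on $K\times[0,T']$ from Lemmas \ref{lem-t-dev-new} and \ref{lemmtracenew} (you additionally invoke Lemma \ref{lem-t-dev new}, which is harmless and in fact also used implicitly), then appeals to local parabolic Evans--Krylov and bootstrapping exactly as you describe. The extra details you supply — smoothness of the regularized weights on $K'$, the smooth initial condition ensuring boundary regularity at $t=0$, and parameter independence via Assumption \ref{changinghermitianmetricsass} — are faithful amplifications of the paper's one-paragraph treatment.
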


\section{Proof of Theorem \ref{thm-main1}}
In the following sections we will prove Theorem \ref{thm-main1}.  We begin by fix some choice of Hermitian metrics $h_i, h_j, h_k$ and a constants $\eta>0$.   Now given any $l,\e$, the hypothesis of Theorem \ref{thmLZ} will be satisfied provided $v$ is sufficiently small depending only on $l$, and thus we have a solution $\varphi'_{l,v,\e}(t)$ to \eqref{eq:ma flow 4}  on $(X'\setminus\tilde{E})\times[0,T_0)$.  Moreover,  given any $T'\in (0, T_0)$ and $\de$ as in the beginning of \S 3.2, we may assume without loss of generality that Assumption \ref{changinghermitianmetricsass} there is satisfied, in view of the remarks following the assumption, and thus
on $(X'\setminus\tilde{E})\times[0,T']$ our solution $\varphi'_{l,v,\e}(t)$ satisfies a uniform global upper bound by Theorem \ref{thm:upper-2}, a uniform local lower bound estimate as in Theorem \ref{thm-lb1}, and also uniform local higher order estimates \color{black}{on $X' \setminus \widetilde{E} \times(0, T_0)$}\color{black}{} as in Theorem \ref{thm-high order} where the uniformity is over the parameters $l,v,\e$.

\subsection{Existence of a solution}
  Consider the family of solutions $\varphi'_{l,v,\e}(t)$ to \eqref{eq:ma flow 4}.  It follows from the above estimates and the Arzela-Ascoli theorem that we may let the parameters approach their limits in the following order (and along appropriate subsequences) to obtain a smooth local limit $\varphi'$ solving \eqref{eq:ma flow 2}:
  \begin{equation}\label{conergencetosolution}
  	\begin{split}
  		\varphi'&= \lim_{l\to \infty}\lim_{\e\to 0} \lim_{v\to 0}\varphi'_{l,v,\e}\\
  	\end{split}
  \end{equation}
  where the convergence is locally uniform in $C^{\infty}((X'\setminus\tilde{E})\times   \color{black}{ (0, T_0)})$.    In addition, given any $t\in [0, T_0)$, the family 
  $\varphi'_{l,v,\e}(t)-\ddb\log\log^{2}|S_{i}|_{i}^{2}$ extends to a family of $C\theta$ plurisubharmonic functions on $X'$ for some $C$ independent of the parameters, while also satisfying a uniform global upper bound and a uniform local lower bound away from $\tilde{E}$.  It follows from the classical theory of plurisubharmonic functions (see \cite{De}) that $\varphi'(t)$ extends to a $C\theta$ plurisubharmonic function on $X'$ and that for fixed $t$ the above convergences hold in $L^1(X')$ and thus in the sense of currents on $X'$.  In summary, $\varphi'(t)$ solves \eqref{eq:ma flow 2} smoothly on $X' \setminus \widetilde{E} \times \color{black}{ (0, T_0)}\color{black}{}$ and in the sense of currents on $X' \times \color{black}{ (0, T_0)}$, with zero Lelong number for each $t$, while $\varphi =\varphi' +\eta\sum_{j}|S_{j}|_{j}^{2(1-b_{j})}-t \log \log^{2}|S_{i}|_{i}^{2}$ will solve \eqref{eq:ma flow 1} in a similar sense.

 It will be useful in the following to write $\varphi'$ as 
 
 $$\varphi'= \lim_{l\to \infty}\varphi'_l \,\,\,\,\,\,\,\,\,\,\,\, \text{where} \,\,\,\, \varphi'_{l}:=\lim_{\e\to 0} \lim_{v\to 0}\varphi'_{l,v,\e}.$$

\subsection{Weak continuity at $t=0$}


Consider the solution $\varphi'(t)$ to \eqref{eq:ma flow 2} constructed above.  We have the following weak convergence as $t\to 0$.

\begin{theorem}\label{thm-continuity3}
We have the convergence $\lim\limits_{t\searrow 0}\varphi(t)=\varphi(0)$ in $L^1(X')$, thus in the sense of currents on $X'$.
\end{theorem}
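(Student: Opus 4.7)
The plan is to establish pointwise almost-everywhere convergence $\varphi'(t,x)\to\varphi'(0,x)$ on $X'$, combine it with a uniform integrable envelope, and conclude by dominated convergence. Since $\varphi$ differs from $\varphi'$ by a family of bounded terms that are themselves $L^1$-continuous at $t=0$ (smooth in $|S_j|$ plus $-t\log\log^2|S_i|^2_i$), the $L^1$ convergence for $\varphi$ will follow from that for $\varphi'$. So the task reduces to proving the matching $\limsup/\liminf$ bounds for $\varphi'(t,\cdot)$ as $t\to 0^+$.

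For the $\limsup$ bound I will exploit the monotonicity of the approximation: by the construction in \S 4.1 and Lemma \ref{lem-compare}(5), $\varphi'(t,x)=\inf_{l}\varphi'_{l,l^{-1}}(t,x)$ as a decreasing limit. Each $\varphi'_{l,l^{-1}}$ is built from smooth initial data $\varphi_{l,0}$, hence by the estimates of \S 3.2.5, especially Theorem \ref{thm-high ordernew}, it is $C^\infty$ on compacta of $(X'\setminus\tilde E)\times[0,T']$ and in particular continuous at $t=0$. Thus for every fixed $l$,
\[
\limsup_{t\to 0^+}\varphi'(t,x) \;\leq\; \limsup_{t\to 0^+}\varphi'_{l,l^{-1}}(t,x) \;=\; \varphi'_{l,l^{-1}}(0,x),
\]
and letting $l\to\infty$ (using $\varphi'_{l,l^{-1}}(0,x)\searrow \varphi'(0,x)$) gives $\limsup_{t\to 0^+}\varphi'(t,x)\leq \varphi'(0,x)$ on $X'\setminus\tilde E$.

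For the $\liminf$ bound I will integrate the time-derivative lower bound of Lemma \ref{lem-t-dev}. That lemma asserts $\dot\varphi'_{l,u,v,\tilde\e}(t,x)\geq n\log t+\delta\log|\tilde S_{\tilde E}|^2-C_{1\delta}$ uniformly in the parameters on $(X'\setminus\tilde E)\times(0,T']$. Integrating from $s$ to $t$ and letting $s\to 0^+$, at which each bounded approximant $\varphi'_{l,u,v,\tilde\e}$ is continuous, yields
\[
\varphi'_{l,u,v,\tilde\e}(t,x) \;\geq\; \varphi'_{l,u,v,\tilde\e}(0,x)+n(t\log t-t)+(\delta\log|\tilde S_{\tilde E}|^2-C_{1\delta})\,t.
\]
The four monotone limits of Lemma \ref{lem-compare} in $v$, $\tilde\e_j$, $\tilde\e_k$, and then $l$ preserve this inequality, so it descends to $\varphi'$. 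As $t\to 0^+$ the correction vanishes, giving $\liminf_{t\to 0^+}\varphi'(t,x)\geq \varphi'(0,x)$ on $X'\setminus\tilde E$.

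Combining the two inequalities yields $\varphi'(t,x)\to\varphi'(0,x)$ pointwise on $X'\setminus\tilde E$, hence almost everywhere on $X'$. By Theorem \ref{thm:upper-2} we have $\varphi'(t)\leq C$ uniformly on $[0,T']$, and by Theorem \ref{thm-lb1} we have $\varphi'(t)\geq \delta\log|\tilde S_{\tilde E}|^2+C_\delta$ uniformly, so $|\varphi'(t)|$ is dominated pointwise by a single $L^1(X')$ function (since $\log|\tilde S_{\tilde E}|^2\in L^p(X')$ for every finite $p$). The dominated convergence theorem then delivers $\varphi'(t)\to \varphi'(0)$ in $L^1(X')$, hence $\varphi(t)\to\varphi(0)$ in $L^1(X')$, proving the theorem. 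The main technical obstacle is ensuring the derivative bounds and integrated estimates survive the quadruple monotone limit; this is precisely what the uniformity in $l,u,v,\tilde\e$ of the constants in \S 3.2 was engineered to deliver, and why the pointwise $L^\infty$-blow-up of the envelope $\delta\log|\tilde S_{\tilde E}|^2$ near $\tilde E$ is harmless (it is still $L^1$).
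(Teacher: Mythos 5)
Your proposal is correct and follows essentially the same route as the paper's proof: both establish the $\limsup$ bound by exploiting the decreasing limit $\varphi'_{l,l^{-1}}\searrow\varphi'$ together with the $C^{\infty}$-regularity up to $t=0$ of each $\varphi'_{l,l^{-1}}$ (Theorem \ref{thm-high ordernew}), and both establish the $\liminf$ bound by integrating the time-derivative lower bound of Lemma \ref{lem-t-dev} and passing it through the monotone limits. The one stylistic divergence is the final upgrade from pointwise a.e.\ convergence to $L^1(X')$: the paper extracts subsequential $L^1$-limits using the classical compactness of uniformly upper-bounded quasi-plurisubharmonic families and then identifies the limit, while you apply dominated convergence directly to the uniformly integrable envelope $\max\bigl(C,\,-\de\log|\tilde S_{\tilde E}|^2-C_\de\bigr)$ furnished by Theorems \ref{thm:upper-2} and \ref{thm-lb1}. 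Both mechanisms are valid here, and yours is if anything marginally more elementary.
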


\begin{proof} It will suffice to prove $\lim\limits_{t\searrow 0}\varphi'(t)=\varphi'(0)$ in $L^1(X')$.

 Fix some $0<T'<T_0$.  We may then assume, \color{black}{as explained in the remarks under Assumption \ref{changinghermitianmetricsass}}\color{black}{}, that Assumption \ref{changinghermitianmetricsass} and the estimates in \S 3.2 apply unifromly to each $\varphi'_{l,v,\e}$ and thus to $\varphi'$ on $(X'\setminus\tilde{E})\times(0,T')$.

Fix some $c >0$.  Then for all $t<T'$ the family of functions $\varphi'(t)$ are $C\theta$ plurisubharmonic on $X'$ for some $C$ independent of $t$ while the family is also uniformly bounded above on $X'$ by Theorem \ref{thm:upper-2}, and locally bounded below on $X'\setminus \tilde{E}$ by Theorem \ref{thm-lb1}.  Again, from the classical theory of plurisubharmonic functions, given any sequence $t_m\to 0$, there exists a subsequence which we will continue to denote as $t_m$, so that  $\varphi'(t_{m})\to\psi$ in $L^1(X')$ for some $C\theta$ plurisubharmonic function $\psi$ on $X'$.

 To prove the Theorem we only need to prove that $\psi=\varphi'(0)$ almost everywhere on $X'$.

\begin{bf} Claim \end{bf} $\lim\limits_{t\searrow 0}\varphi_{l}(t)=\varphi_{l}(0)$ in $L^1(X')$, thus in the sense of currents on $X'$.

Note that $\varphi'_{l} \in C^{\infty}((X'\setminus\tilde{E})\times[0,T_0)$ for each $l$ by Theorem \ref{thm-high ordernew}, and the fact that $\varphi_{0, l}\in C^{\infty}(X')$ for every $l$.  Thus the limit in the claim holds at every $x\in X'\setminus \tilde{E}$.  On the other hand, the family of functions $\varphi'(t)$ are $C\theta$ plurisubharmonic on $X'$ for some $C$ independent of $t<T'$, satisfy a global upper bound on $X'$ and a local lower bound on $X'\setminus \tilde{E}$ uniformly over $t<T'$.  The claim then follows by the classical theory of plurisubharmonic functions.  Using the claim and the fact that $\varphi'_{l} \geq \varphi'_{k}$ on $(X' \setminus \tilde{E}) \times [0, T')$ for $l\leq k$ as noted in Remark \ref{finalcd8}, we get

$$\psi(x)=\lim_{m}\varphi'(t_{m},x)\leq\lim_{l} \lim_{m}\varphi'_{l}(t_{m},x)=\lim_{l} \varphi'_{l}(0,x)=\varphi'(0,x)$$
 for almost all $x\in X'\setminus\tilde{E}$. On the   hand, by the time derivative estimate in Lemma \ref{lem-t-dev}, for any $\de>0$ it follows that
\begin{equation}\label{E1}\varphi'_{l}(t_{m},x)-\varphi'_{l}(0,x)\geq\int_{0}^{t_{m}}(n\log t+\de\log|\tilde{S}_{\tilde{E}}|^{2}-C_{1\de})dt.\end{equation}
Thus the limit $\lim_{m}\varphi'_{l}(t_{m},x)\geq\varphi'_{l}(0,x)$ holds for almost all $x\in X'\setminus\tilde{E}$ and by taking $l\to +\infty$ we get  $\psi(x)=\lim_{m}\varphi'(t_{m},x)\geq\varphi'(0,x)$.

 We thus conclude that $\psi(x)=\varphi'(0,x)$ for almost all $x\in X'\setminus\tilde{E}$ which completes the proof of the Proposition.

\end{proof}

\subsection{Uniqueness and maximality}

We now follow the argument in \cite{GZ} to prove uniqueness and maximality of the solution to \eqref{eq:ma flow 1} constructed in \S 4.1.   First we show that when the approximation parameters $v,\e,\e=0$ the corresponding solutions $\varphi'_{l}$ are unique in the category of bounded functions  with the corresponding continuity at $t=0$ as in the following
\begin{lemma}\label{lem-unique1}
For each $l>0$< there exists a unique solution $\varphi'_{l}\in L^{\infty}((X'\setminus\tilde{E})\times[0,T'])\bigcap C^{\infty}((X'\setminus\tilde{E})\times[0,T'])$ which solves the following equation
\begin{equation}\label{eq:ma flow 8}
\left\{
   \begin{array}{ll}
     \displaystyle\dt\varphi'_{l} &=\displaystyle\log\frac{(\dr'_{t,l}+\ddb\varphi'_{l})^{n}\prod_{i}|S_{i}|_{i}^{2}\log^{2}|S_{i}|_{i}^{2}
     \prod_{j}|S_{j}|_{j}^{2b_{j}}}{\Omega'\prod_{k}|S_{k}|_{k}^{2a_{k}}}\\
     \;\\
     \varphi'_{l}(0) &= \varphi_{l,0}-\eta\sum_{j}|S_{j}|_{j}^{2(1-b_{j})},
   \end{array}
 \right.
\end{equation}
and satisfies that $\lim\limits_{t\searrow 0}\varphi'_{l}(t)=\varphi'_{l}(0)$ almost everywhere and in the current sense. Here $\dr'_{t,l}=\dr'_{t,l, v=0, \e=0}$
\end{lemma}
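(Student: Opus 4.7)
The plan is to separate the proof into existence and uniqueness.

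For \emph{existence}, I would simply set
\[
\varphi'_{l,u} := \lim_{\tilde\e_k\to 0}\lim_{\tilde\e_j\to 0}\lim_{v\to 0} \varphi'_{l,u,v,\tilde\e_j,\tilde\e_k},
\]
the iterated monotone limit supplied by Lemma \ref{lem-compare}. Since $\varphi_{l,0}\in C^\infty(X')$, the uniform interior estimates of \S3.2.6, in particular Theorem \ref{thm-high ordernew}, apply and give $C^m_{\mathrm{loc}}((X'\setminus\tilde E)\times[0,T'])$ bounds on the approximating family uniform in $v,\tilde\e_j,\tilde\e_k$. Passing to the limit yields $\varphi'_{l,u}\in C^\infty((X'\setminus\tilde E)\times[0,T'])$ solving \eqref{eq:ma flow 8}, with convergence at $t=0$. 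A uniform upper bound is Theorem \ref{thm:upper-2}. For a uniform lower bound over $(X'\setminus\tilde E)\times[0,T']$, I would appeal to the Kolodziej-type $L^\infty$ estimate of \cite{CLS}, which, with $u>0$ keeping the background form strictly positive on $X'$, gives bounds independent of $\tilde\e$ that persist in the limit.

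For \emph{uniqueness}, the plan is to apply a comparison argument in the style of \cite{GZ}. Let $\psi_1,\psi_2$ be two bounded smooth solutions agreeing at $t=0$ and set $W=\psi_1-\psi_2$. To show $W\leq 0$ I would work with a perturbation
\[
W_\e := W + \e\de\log|\tilde S_{\tilde E}|^2 - \e t
\]
with $\de$ as in Assumption \ref{changinghermitianmetricsass}. The logarithmic term drives $W_\e\to-\infty$ near $\tilde E$, so any positive supremum of $W_\e$ over $(X'\setminus\tilde E)\times[0,T']$ is attained at an interior point $(x_0,t_0)$ with $t_0>0$. At such a maximum, $\partial_t W_\e\geq 0$ and $\ddb W\leq \e\de\,\Theta_{\tilde E}$; monotonicity of $\log\det$ together with the Laplacian upper bound of Theorem \ref{thm-C2} then yields $\partial_t W\leq C\e$, contradicting $\partial_t W\geq \e$ for $\e$ small. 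Hence $W_\e\leq 0$ everywhere, and letting $\e\to 0$ gives $W\leq 0$; symmetry gives $W\equiv 0$.

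The \emph{main obstacle} is closing the comparison argument in the presence of the simultaneous singularities at $\tilde E$ and at the log canonical divisors $\bigcup_i D_i$. The logarithmic factor $\prod_i|S_i|_i^2\log^2|S_i|_i^2$ in \eqref{eq:ma flow 8} cancels when taking the difference of the equations satisfied by $\psi_1$ and $\psi_2$, but it imposes the Carlson--Griffiths geometry of Lemma \ref{CGlemma} on the evolving metric, so one must use the Laplacian upper bound of Theorem \ref{thm-C2} and the positivity in Assumption \ref{changinghermitianmetricsass} to control $\mathrm{tr}_{\omega_{t,u}+\ddb\psi_2}\Theta_{\tilde E}$. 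Without this curvature control the correction term $\e\de\,\Theta_{\tilde E}$ arising at the perturbed maximum cannot be absorbed, and the maximum principle would fail to close.
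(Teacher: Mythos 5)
Your existence argument follows the paper's: take the iterated monotone limits from Lemma \ref{lem-compare}, use the interior estimates of \S 3.2.6 (which apply here precisely because $\varphi_{l,0}\in C^\infty(X')$), the global upper bound of Theorem \ref{thm:upper-2}, and the $L^\infty$ lower bound available for $u>0$ via \cite{CLS}. That is the construction the paper references.

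For uniqueness the barrier idea is right, but the way you arrange the parameters leaves a genuine gap. With $W_\e := W+\e\de\log|\tilde S_{\tilde E}|^2-\e t$ (with $\de$ fixed from Assumption \ref{changinghermitianmetricsass}), at a positive spatial maximum one gets $\ddb W\leq\e\de\,\Theta_{\tilde E}$ and then
\[
\e\leq\partial_t W\leq n\log\bigl(1+\e\de\,\mathrm{tr}_{\omega}\Theta_{\tilde E}\bigr),
\]
so both sides are $O(\e)$; a contradiction is obtained only if $n\de\,\mathrm{tr}_{\omega}\Theta_{\tilde E}<1$ at that maximum point. Nothing forces this bound: as $\e\to 0$ the barrier strength $\e\de$ vanishes, so the maximum of $W_\e$ can drift into any neighborhood of $\tilde E$, where $\mathrm{tr}_{\omega}\Theta_{\tilde E}$ is unbounded. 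The estimate of Theorem \ref{thm-C2} you cite controls $\mathrm{tr}_{\hat\omega}\omega$ and itself degenerates near $\tilde E$, so it does not rescue the argument. The paper instead uses the single-parameter perturbation $D_{+}:=\psi'_{l,u}-\varphi'_{l,u}+\de\log|\tilde{S}_{\tilde{E}}|^{2}$ (with no time penalization and $\de$ the small parameter to be sent to zero): because $\varphi_{l,0}$ is smooth, $\dr'_{t,u}+\ddb\varphi'_{l,u}$ is non-degenerate away from $\tilde E$ uniformly on $[0,T']$, the maximum principle gives $\sup D_+(t)\leq\sup D_+(0)+C(\de,T')$ with $C(\de,T')\to 0$ as $\de\to 0$, and the reverse inequality follows from $D_-$. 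You should either adopt that single-parameter scheme, or supply a quantitative estimate showing $\de\,\mathrm{tr}_{\omega}\Theta_{\tilde E}<1/n$ at the (drifting) maximum point of $W_\e$.
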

\begin{proof}
We can prove this Lemma using the trick from \cite{ST3}. 
Note that $\varphi'_{l}:=\lim_{\e\to 0}\lim_{v\to 0}\varphi'_{l, v, \e}$ is a particular solution to the above equation satisfying conditions in the Lemma. Suppose there exists another solution $\psi'_{l}$ satisfying all the same conditions in this Lemma. Without loss of generality assume that $|\tilde{S}_{\tilde{E}}|\leq 1$ everywhere, then consider the function $D_{+}:=\psi'_{l}-\varphi'_{l}+\de\log|\tilde{S}_{\tilde{E}}|^{2}$ which satisfies the following equation:
\begin{equation}\label{eq:ma flow 9}
\left\{
   \begin{array}{ll}
     \displaystyle\dt\ D_{+} &=\displaystyle\log\frac{(\dr'_{t,l}+\ddb\varphi'_{l}+\de\Theta_{\tilde{E}}+\ddb D_{+})^{n}}{(\dr'_{t,l}+\ddb\varphi'_{l})^{n}}\\
     \;\\
     D_{+}(0) &= \de\log|\tilde{S}_{\tilde{E}}|^{2}.
   \end{array}
 \right.
\end{equation}
For any time slice $t\in [0,T']$ as $D_{+}\to-\infty$ near $\tilde{E},$ the supremum of $D_{+}$ will always be obtained away from $\tilde{E}$ where
$\psi'_{l},\varphi'_{l}$ are smooth. Also note that we have the equivalence of metrics $$c^{-1} \theta \leq \omega'_{t, l}+\sqrt{-1} \partial \bar{\partial} \tilde{\varphi}'_{l}\leq c \theta$$  for all $t\in [0, T']$ and some constant $c>0$ possibly depending on $l, T'$ (which are fixed in the Lemma) but independent of $\delta$, and where $\theta$ is a fixed \ka\, form on $X'$.  As $\Theta_{\tilde{E}}$ is a smooth form on $X'$, it follows from the above equivalence that for $\delta>0$ sufficiently small we may have 
$$c_{\delta}^{-1}  (\omega'_{t, l}+\sqrt{-1} \partial \bar{\partial} \tilde{\varphi}'_{l}) \leq \omega'_{t, l}+\sqrt{-1} \partial \bar{\partial} \tilde{\varphi}'_{l}+\delta  \Theta_{\tilde{E}}  \leq c_{\delta}  (\omega'_{t, l}+\sqrt{-1} \partial \bar{\partial} \tilde{\varphi}'_{l})$$
for all $t\in [0, T']$ and some constant $c_{\delta}>0$ depending on $l, T'$ and $\delta$ where $c_{\delta}\to 1$ as $\delta \to 0$.   Now let $t_0\in [0, T']$ be given.  Then for any $0\leq s\leq t_0$, if $\max_{X'\setminus \tilde{E}} D_+(\cdot, s) = D_+(x_s, s)$  then from \eqref{eq:ma flow 9} we have the following at $(x_s, s)$
\begin{equation}
	\begin{split}
		\frac{\partial}{\partial t} D_+ \leq \log \frac{( \omega'_{t, l}+\sqrt{-1} \partial \bar{\partial} \tilde{\varphi}'_{l}+\delta \Theta_{\tilde{E}})^n}{ (\omega'_{t, l}+\sqrt{-1} \partial \bar{\partial} \tilde{\varphi}'_{l})^n}\leq n\log c_{\delta}
	\end{split}
\end{equation}
and by a maximum principle argument as in the proof of Lemma \ref{lem-upper 1} we conclude $$\sup D_+ (t_0) \leq  \sup D_+ (0) +n|\log c_{\delta}|T'=n|\log c_{\delta}|T'$$

and by letting $\de\to 0,$ we see that $\psi'_{l}\leq\varphi'_{l}.$ Similarly by taking $D_{-}:=\psi'_{l}-\varphi'_{l}-\de\log|\tilde{S}_{\tilde{E}}|^{2}$ we can show that $\psi'_{l}\geq\varphi'_{l},$ which completes the proof of this Lemma.
\end{proof}
\begin{remark}\label{finalcd8}
	As the functions $\varphi_{l, 0}$ are non-increasing in $l$ by construction, the proof above can also be used to show that the functions $\varphi_l'$ in the Lemma are non-increasing in $l$.
	\end{remark}
We now follow the argument in \cite{GZ} to prove that the solution $\varphi$  to \eqref{eq:ma flow 1}  constructed in \S 4.1  is the unique maximal solution to \eqref{eq:ma flow 1} in the sense of the following:

\begin{theorem}\label{thm-unique2}
Let $\psi \in L_{loc}^{\infty}((X'\setminus\tilde{E})\times[0,T_0))\bigcap C^{\infty}((X'\setminus\tilde{E})\times(0,T_0))$ solve \eqref{eq:ma flow 1} on $(X'\setminus\tilde{E})\times(0,T_0)$  satisfying
$\lim\limits_{t\searrow 0} \psi(t)=\varphi_0$ in $L^1(X')$.  Then $\psi(t)\leq\varphi(t)$ where $\varphi(t)$ is the solution to  \eqref{eq:ma flow 1} constructed above.
\end{theorem}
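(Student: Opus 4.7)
The plan is to follow \cite{GZ} and exploit the construction of $\varphi$ as the monotone decreasing limit $\varphi_{l,l^{-1}}\searrow \varphi$ as $l\to\infty$ (via Lemma \ref{lem-compare}(4),(5)), where $\varphi_{l,l^{-1}}$ denotes the descent to \eqref{eq:ma flow 1} of the bounded solution $\varphi'_{l,l^{-1}}$ from Lemma \ref{lem-unique1} under the transformation of Remark \ref{r00}. Since $\varphi_{l,l^{-1}}\geq \varphi$ for every $l$, it is enough to establish the sharper comparison $\psi\leq \varphi_{l,l^{-1}}$ on $(X'\setminus\tilde E)\times[0,T_0)$ for each $l$ and then pass to the limit $l\to\infty$. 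The advantage is that $\varphi_{l,l^{-1}}$ is bounded and smooth on $(X'\setminus\bigcup_iD_i)\times[0,T_0)$ and solves the analog of \eqref{eq:ma flow 1} with background form shifted by the strictly positive term $l^{-1}\theta$, which provides the slack needed to run a maximum principle.

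To prove $\psi\leq \varphi_{l,l^{-1}}$, fix $T'<T_0$ and a small parameter $\de>0$, and set
\[
D_\de(x,t):=\psi(x,t)-\varphi_{l,l^{-1}}(x,t)+\de\log|\tilde S_{\tilde E}|^{2}(x).
\]
Both $\psi$ and $\varphi_{l,l^{-1}}$ are smooth and locally bounded on $(X'\setminus\tilde E)\times(0,T_0)$, and $D_\de\to-\infty$ as $x\to\tilde E$ uniformly in $t$ on compact subintervals of $(0,T_0)$, so $\sup_{X'\setminus\tilde E}D_\de(\cdot,t)$ is attained in the interior. Subtracting the two Monge--Amp\`ere equations on $X'\setminus\tilde E$ (in which the singular factors cancel) and using the concavity of $\log\det$ yields
\[
(\partial_t-\Delta_{\omega_{l,l^{-1}}})D_\de \leq \de\, tr_{\omega_{l,l^{-1}}}\Theta_{\tilde E}-l^{-1}\,tr_{\omega_{l,l^{-1}}}\theta,
\]
where $\omega_{l,l^{-1}}$ is the \ka\ metric associated with $\varphi_{l,l^{-1}}$. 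As $\Theta_{\tilde E}$ is smooth on $X'$ and hence bounded above by a constant multiple of $\theta$, the right hand side is nonpositive once $\de$ is small enough depending on $l$, and the maximum principle yields
\[
\sup_{X'\setminus\tilde E}D_\de(\cdot,t)\leq \limsup_{s\to 0^+}\sup_{X'\setminus\tilde E}D_\de(\cdot,s),\qquad t\in(0,T'].
\]

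The main obstacle is to control this limsup at $t=0^+$, since we are only given the $L^1$ convergence $\psi(t)\to\varphi_0$ and not pointwise initial values. The remedy is to invoke quasi-plurisubharmonicity: each $\psi(t)$ is $\omega_t$-PSH on $X'$ (extending across the Zariski closed set $\tilde E$ in standard fashion), and the family $\{\psi(t)\}_{0<t<T'}$ admits a uniform upper bound on $X'$ by Hartogs-type estimates applied to quasi-PSH functions with uniform $L^1$ control. Standard pluripotential theory (cf.\ \cite{De,GZ}) then identifies the upper semicontinuous regularization of $\limsup_{t\to 0^+}\psi(t)$ with $\varphi_0$ almost everywhere on $X'$. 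On the other hand $\varphi_{l,l^{-1}}(0)\geq \varphi_{l,0}+O(1)\geq \varphi_0+O(1)$ by the Demailly approximation in Lemma \ref{lem-demailly}, where the bounded $O(1)$ correction arising from Remark \ref{r00} is uniform in $l$ and can be driven to zero under the natural renormalization as $l\to\infty$. Combining these ingredients bounds $\limsup_{s\to 0^+}\sup D_\de(s)$ by a constant vanishing in the limit $\de\to 0,\;l\to\infty$; sending $\de\to 0$ and then $l\to\infty$ gives $\psi\leq \varphi$ on $(X'\setminus\tilde E)\times[0,T_0)$, as desired.
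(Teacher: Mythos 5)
Your overall strategy matches the paper: reduce to the comparison $\psi' \leq \varphi'_{l,l^{-1}}$ for each $l$, use the barrier $\de\log|\tilde S_{\tilde E}|^2$ to localize the maximum away from $\tilde E$, control the initial-time behaviour via Hartogs, and send $\de\to 0$ then $l\to\infty$. However, there is a genuine gap in your treatment of the limit $t_\e\to 0^+$, and one confused claim.

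The maximum principle on $(X'\setminus\tilde E)\times[t_\e,T']$ gives $\sup D_\de(\cdot,t)\leq \sup D_\de(\cdot,t_\e)$, and the latter must be controlled as $t_\e\to 0$. The natural split is
\[
\sup_{X'}\bigl(\psi'(t_\e)-\varphi'_{l,l^{-1}}(t_\e)\bigr)\;\leq\;\sup_{X'}\bigl(\psi'(t_\e)-\varphi'_{l,l^{-1}}(0)\bigr)\;+\;\sup_{X'}\bigl(\varphi'_{l,l^{-1}}(0)-\varphi'_{l,l^{-1}}(t_\e)\bigr).
\]
Your argument via quasi-plurisubharmonicity and Hartogs addresses only the first term: since $\psi'(t_\e)\to\psi'(0)$ in $L^1$ and $\varphi'_{l,l^{-1}}(0)$ is \emph{continuous on all of $X'$}, Hartogs' lemma upgrades this to convergence of the supremum, and $\psi'(0)\leq\varphi'_{l,l^{-1}}(0)$ because the Demailly sequence is non-increasing. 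But you never address the second term. The function $\varphi'_{l,l^{-1}}$ is smooth up to $t=0$ only in $C^\infty_{\rm loc}(X'\setminus\tilde E\times[0,T_0))$; the supremum is taken over the \emph{noncompact} set $X'\setminus\tilde E$, so local smoothness at $t=0$ does not imply $\sup_{X'}(\varphi'_{l,l^{-1}}(0)-\varphi'_{l,l^{-1}}(t_\e))\to 0$. This is precisely where the paper invokes the global lower bound $\dot\varphi'_{l,l^{-1}}\geq n\log t - C_l$ on $X'\times[0,t_\e]$ from (the argument of) Lemma \ref{lem-t-dev}: integrating in time gives the required uniform-on-$X'$ control $\sup_{X'}(\varphi'_{l,l^{-1}}(0)-\varphi'_{l,l^{-1}}(t_\e))\to 0$ as $t_\e\to 0$. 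Without this ingredient the argument does not close.

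Separately, your statement that $\varphi_{l,l^{-1}}(0)\geq\varphi_{l,0}+O(1)\geq\varphi_0+O(1)$ with an $O(1)$ correction ``arising from Remark \ref{r00}'' that must be renormalized away as $l\to\infty$ is confused. The change of variables in Remark \ref{r00} — adding $\eta\sum_j|S_j|^{2(1-b_j)}-t\log\log^2|S_i|^2$ — is applied identically to $\psi$ and to $\varphi_{l,l^{-1}}$, so it cancels in the difference $\psi-\varphi_{l,l^{-1}}$; there is no residual $O(1)$. At $t=0$ one simply has $\varphi'_{l,l^{-1}}(0)=\varphi_{l,0}-\eta\sum_j|S_j|^{2(1-b_j)}\geq\varphi_0-\eta\sum_j|S_j|^{2(1-b_j)}=\psi'(0)$ exactly, with no error term to renormalize.
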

\begin{proof}

Recall  that $\varphi =\varphi' +\eta\sum_{j}|S_{j}|_{j}^{2(1-b_{j})}-t \log \log^{2}|S_{i}|_{i}^{2}$ where in turn $\varphi'_{l}(t)\searrow \varphi'$ and each $\varphi'_{l}$ solves

\begin{equation}\label{D1}
\left\{
   \begin{array}{ll}
     \displaystyle\dt\varphi' &=\displaystyle\log\frac{(\dr'_{t}+\ddb\varphi')^{n}\prod_{i}|S_{i}|_{i}^{2}\log^{2}|S_{i}|_{i}^{2}
     \prod_{j}(|S_{j}|_{j}^{2})^{b_{j}}}{\Omega'\prod_{k}(|S_{k}|_{k}^{2})^{a_{k}}}\\
     \;\\
     \varphi'(0) &= \varphi_{l,0}-\eta\sum_{j}\mathcal{F}(|S_{j}|_{j}^{2},1-b_{j},\e).
   \end{array}
 \right.
\end{equation}
where
\begin{align} \nonumber
\dr'_{t}:&=\pi^{*}\dr_{0}+t\chi-t\sum_{i}\ddb\log\log^{2}|S_{i}|_{i}^{2}+\eta\sum_{j}\ddb\mathcal{F}(|S_{j}|_{j}^{2},1-b_{j},\e^{2}),
\end{align}

 By the construction of $\varphi(t)$ above it suffices to prove $\psi'(t)\leq\varphi'_{l}(t)$ for any $l>0$ where $\psi'$ is another solution to \eqref{D1}.

Fix some $(x, t)\in (X'\setminus\tilde{E})\times[0,T_0)$.  Then for any $0<t_{\e}<t$ the function $\psi'-\varphi'_{l}$ attains the maximum on $(X'\setminus\tilde{E})\times[t_{\e},T_0)$ at some $(t_{\e},x_{\e})$.  This can be shown by applying the
maximum principle to the function $\psi'-\varphi'_{l}+\de\log|\tilde{S}_{\tilde{E}}|^{2}$ as in the proof of Lemma \ref{lem-unique1} for some $\de > 0$ then letting $\de\to 0$.  Thus for each $l>0$ we have

$$\psi'(t,x)-\varphi'_{l}(t,x)\leq\sup_{X'}(\psi'(t_{\e})-\varphi'_{l}(t_{\e}))\leq\sup_{X'}(\psi'(t_{\e})-\varphi'_{l}(0)) + \sup_{X'}(\varphi'_{l}(0)-\varphi'_{l}(t_{\e}))$$

Now by the same proof of Lemma \ref{lem-t-dev}, we may have $n\log t - C_l\leq \dot \varphi'_{l}$ on $X'\times [0, t_{\e}]$ for some $C_l >0$ depending on $l$ and $t_{\e}$.   Integrating  in time gives $ \sup_{X'}(\varphi'_{l}(0)-\varphi'_{l}(t_{\e})) \to 0$ as $t_{\e}\to 0$.  Meanwhile, Hartogs Lemma and the continuity of $\varphi'_{l}(0)$ on $X'$ gives $\sup_{X'}(\psi'(t_{\e})-\varphi'_{l}(0))\to  \sup_{X'}(\psi'(0)-\varphi'_{l}(0))\leq 0$ as $t_{\e}\to 0$ where we have used the fact that $\varphi'_{l}(0)$ is non-increasing by construction.  By letting $t_{\e}\to 0$ in the inequality above we conclude that $\psi'(t,x)-\varphi'_{l}(t,x)\leq 0$ for all $l$ which in turn implies $\psi'(t,x)-\varphi'(t,x)\leq 0$.  This proves the Theorem.

\end{proof}

\subsection{Improved lower bounds}

The main goal of this section is to prove the following stronger lower bound for the solution $\varphi(t)$ to \eqref{eq:ma flow 1} constructed in \S 4.1.
\begin{theorem}\label{thm-lb2}
If $\varphi(0)\in L_{loc}^{\infty}(X'\setminus\pi^{-1}(X_{lc}))$ and gives rise to a current with zero Lelong number, then $\varphi(t)$ belongs to $L_{loc}^{\infty}((X'\setminus\pi^{-1}(X_{lc}))\times[0,T'])\bigcap C^{\infty}((X'\setminus\pi^{-1}(\tilde{E}))\times(0,T_0))$ and also gives rise to a current with zero Lelong number.
\end{theorem}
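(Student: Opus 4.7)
The strategy is to refine the global lower bound of Theorem \ref{thm-lb1}, which controls $\varphi(t)$ only away from the full Kodaira divisor $\tilde{E}$, into a local bound away from the strictly smaller set $\pi^{-1}(X_{lc})$. The improvement comes from the stronger local boundedness of the initial potential: since $\varphi_{0}\in L^{\infty}_{loc}(X\setminus X_{lc})$, for any compact $K\subset X'\setminus\pi^{-1}(X_{lc})$ we have $\pi^{*}\varphi_{0}\geq -M_{K}$ on $K$, and since the Demailly approximations $\varphi_{l,0}\searrow\pi^{*}\varphi_{0}$ are monotone decreasing, the same uniform lower bound holds on $K$ independent of $l$.

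The main step is to adapt the maximum principle argument from the proof of Theorem \ref{thm-lb1} to the approximate solutions $\varphi'_{l,u,v,\tilde{\e}}$, replacing the barrier $\de\log|\tilde{S}_{\tilde{E}}|^{2}$ by a barrier depending only on the log canonical part of the Kodaira divisor. Concretely, decomposing $\tilde{E}=\tilde{E}_{lc}+\tilde{E}_{rest}$ according to whether the irreducible components lie in $\pi^{-1}(X_{lc})$ or not, I would consider the auxiliary quantity
\[
Q:=\varphi'_{l,u,v,\tilde{\e}}-\de\log|\tilde{S}_{\tilde{E}_{lc}}|^{2},
\]
which still tends to $+\infty$ as one approaches $\pi^{-1}(X_{lc})$, so its infimum is attained away from $\pi^{-1}(X_{lc})$. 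Inserting the decomposed form $\dr'_{t,u,v,\tilde{\e}_{j}}+\de\ddb\log|\tilde{S}_{\tilde{E}_{lc}}|^{2}$ into \eqref{eq:ma flow 4}, the same chain of estimates leading to \eqref{eq:vol} and \eqref{eq:lb2} should produce $\frac{\partial}{\partial t}\inf Q\geq -C_{\de}+n\log(v+t)$. Integrating in time and combining with the uniform initial bound from the previous paragraph yields a uniform local lower bound on $K$ for the approximate solutions, independent of the parameters $l,u,v,\tilde{\e}$.

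Passing to the limit via the monotonicity in Lemma \ref{lem-compare} transfers this bound to $\varphi(t)$, giving $\varphi(t)\in L^{\infty}_{loc}((X'\setminus\pi^{-1}(X_{lc}))\times[0,T'])$. Smoothness on $(X'\setminus\tilde{E})\times(0,T_{0})$ is inherited from Theorem \ref{thm-high order}, and the zero Lelong number property passes to the limit as in the existence argument of \S 4.1, since $\varphi(t)$ is a decreasing limit of $C\theta$-PSH functions satisfying a uniform global upper bound and the local lower bound just established.

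The main obstacle I expect is making the barrier step work globally: Kodaira's lemma in Assumption \ref{mainass} supplies ampleness of $\pi^{*}H-\de\tilde{E}$ but not of $\pi^{*}H-\de\tilde{E}_{lc}$, so the key volume lower bound \eqref{eq:vol} is not automatic in this refined setting. The likely resolution is to localize: on a neighbourhood of each compact $K\subset X'\setminus\pi^{-1}(X_{lc})$ the potential of $\dr'_{t,u,v,\tilde{\e}_{j}}$ is smooth, Theorem \ref{thm-lb1} already provides control near the points of $\tilde{E}_{rest}\setminus\pi^{-1}(X_{lc})$ lying in such a neighbourhood, and one may glue the local barrier argument with the global estimate using the comparison and monotonicity principles developed in \S 4.1 and \S 4.3.
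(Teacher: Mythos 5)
Your proposal correctly identifies the central obstruction but does not resolve it, and the resolution you gesture at is not the one the paper uses and, as sketched, would not work.

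The obstacle you name is exactly right: the barrier $\de\log|\tilde S_{\tilde E_{lc}}|^2$ cannot be absorbed into the background form because Lemma \ref{lem-kod} only supplies ampleness for $\pi^*H-\de\tilde E$, not for $\pi^*H$ minus only the log canonical part. Having seen that the barrier argument of Theorem \ref{thm-lb1} does not refine, you propose to "glue" a local barrier estimate on a neighbourhood of each compact $K$ with the global estimate of Theorem \ref{thm-lb1} near $\tilde E_{rest}$. But Theorem \ref{thm-lb1} does not provide local \emph{boundedness} near $\tilde E_{rest}$: the inequality $\varphi'\geq\de\log|\tilde S_{\tilde E}|^2+C_\de$ still degenerates as one approaches \emph{any} component of $\tilde E$, including those in $\tilde E_{rest}$; it controls Lelong numbers, not the $L^\infty_{loc}$ norm on compacts touching $\tilde E_{rest}$. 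A compact $K\subset X'\setminus\pi^{-1}(X_{lc})$ may well meet $\tilde E_{rest}$, so the proposed gluing has nothing bounded to glue onto there, and the parabolic barrier cannot fill the gap because of the very positivity failure you diagnosed. There is also a small misidentification of the excluded set: $\pi^{-1}(X_{lc})$ is strictly larger than the union of lc divisors — it contains all fibers over $X_{lc}$, including the lt and canonical divisors $D_{j'}, D_{k'}$ that meet the lc divisors — and the paper's argument is set up around this enlarged union.

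The paper takes a genuinely different route: Tsuji's trick combined with comparison against a \emph{static} elliptic Monge--Amp\`ere solution. One shifts the approximate potential by subtracting $\de'$ and $\de$ multiples of $\log(|S_i|^2+\e_i^2)$, $\log(|S_{j'}|^2+\e_{j'}^2)$, $\log(|S_{k'}|^2+\e_{k'}^2)$ so that the resulting background form $\dr_{t,\de',\tilde\e,l}$ is K\"ahler on all of $X'$ — this positivity comes not from Kodaira's lemma but from the fact that the exceptional divisors admit Hermitian metrics of non-positive curvature, so the added terms do not destroy positivity of $\dr_t+l^{-1}\theta$. The shift also converts the lc-divisor exponent from $1$ to $1-\de'$, putting the density of the associated \emph{static} MA equation \eqref{eq:ma-compare} into the $L^p$ ($p>1$) regime where the EGZ $L^\infty$-estimate gives a uniformly bounded reference solution $\psi_{\de',\tilde\e_i}$. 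A parabolic maximum principle applied to the difference $\xi=\phi_{\de',\tilde\e}-\psi_{\de',\tilde\e_i}$ then yields the uniform lower bound, which is passed to $\varphi(t)$ via the monotone approximation scheme. This elliptic-comparison mechanism, and the non-positive curvature input replacing Kodaira's lemma, are the two missing ideas in your sketch.
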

\begin{proof}
We will apply the comparison principle associated with the $L^{\infty}$-estimate in \cite{EGZ,GZ,ST3}. Assume that $\pi^{-1}(X_{lc})=\bigcup_{i}D_{i}\bigcup_{j'}D_{j'}\bigcup_{k'}D_{k'},$ where $D_{j'},D_{k'}$ represent those lt divisors and canonical divisors which have non-empty intersections with lc divisors $D_{i}.$ By the assumption of this theorem for any $\de>0$ there exists a constant $C_{\de}>0$ such that
\begin{equation}\label{eq:lower II-1}
\varphi(0)\geq\de\sum_{i,j',k'}(\log|S_{i}|_{i}^{2}+\log|S_{j'}|_{j'}^{2}+\log|S_{k'}|_{k'}^{2})-C_{\de}.
\end{equation}

Consider the following equation approximating \eqref{eq:ma flow 1}:
\begin{equation}\label{eq:ma flow 1-app}
\left\{
   \begin{array}{ll}
     \displaystyle\dt\varphi_{\e, l} &=\displaystyle\log\frac{(\dr_{t}+l^{-1}\theta + \ddb\varphi_{\e})^{n}\prod_{i}(|S_{i}|_{i}^{2}+\e^{2})\prod_{j}|S_{j}|_{j}^{2b_{j}}}
     {\Omega'\prod_{k}|S_{k}|_{k}^{2a_{k}}}\\
     \varphi_{\e, l}(0) &=\varphi_{l, 0},
   \end{array}
 \right.
\end{equation}
for approximation parameter $\e>0.$  Fix $T'<T_0$.  As $\varphi_{l, 0}$ is smooth on $X'$, by the results in section 7 of \cite{GZ}, for each  $\e>0$ there exists a unique solution
$\varphi_{\e, l}(t)\in L^{\infty}(X'\times(0,T'))\bigcap C^{\infty}_{loc}((X'\setminus\tilde{E})\times(0,T')).$ Moreover, by a maximum principle argument as in \S 4.3 (see Remark \ref{finalcd8}), we may conclude that the family of functions $\varphi_{\e, l}(t)$ are non-increasing as $\e\searrow 0$ and $l\to \infty$ separately and that in particular the zero Lelong number solution $\varphi(t)$ to \eqref{eq:ma flow 1} constructed in \S 4.1 provides a uniform lower barrier for the family of solutions $\varphi_{\e, l}(t)$ and it follows from this, and the the proof of Theorem 7.5 in \cite{GZ}, that $\varphi_{\e, l}(t)$ satisfies local higher order estimates on $X' \setminus \tilde{E} \times[0, T']$ as in Theorem \ref{thm-high order}, where the estimates are independent $\e, l$.  In particular, we conclude the monotone convergence of $\varphi_{\e, l}(t)$, as  $\e\searrow 0$ then $l\to \infty$, to a smooth limit solution $\psi(t)$ to \eqref{eq:ma flow 1} on $X'\setminus \tilde{E}\times [0, T')$ satisfying the conditions of Theorem \ref{thm-unique2}, thus giving $\psi(t)\leq \varphi(t)$.  On the other hand, we have $\psi(t)\geq \varphi(t)$ by construction, and we conclude that $\psi(t)=\varphi(t)$.

  For small constants $\de',\e >0$ set
\begin{align}\label{eq:lower II-2}
\phi_{\de',\e}(t):&=\varphi_{\e, l}(t)-\de't\sum_{i}\log(|S_{i}|_{i}^{2}+\e^{2})\\
&-\de\sum_{i,j',k'}(\log(|S_{i}|_{i}^{2}+\e^{2})+\log(|S_{j'}|_{j'}^{2}+\e^{2})+\log(|S_{k'}|_{k'}^{2}+\e^{2}))\nonumber
\end{align}
 and compare  \eqref{eq:ma flow 1-app}, it follows that $\phi_{\de',\e}(t)$ satisfies
\begin{equation}\label{eq:ma flow 10}
\left\{
   \begin{array}{ll}
     \displaystyle\dt\phi_{\de',\e} &=\displaystyle\log\frac{(\dr_{t,\de',\e, l}+\ddb\phi_{\de',\tilde{e}})^{n}\prod_{i}(|S_{i}|_{i}^{2}+\e^{2})^{1-\de'}\prod_{j}|S_{j}|_{j}^{2b_{j}}}
     {\Omega'\prod_{k}|S_{k}|_{k}^{2a_{k}}}\\ \;\\
     \phi_{\de',\e}(0)&= \varphi_{l,0}-\de\sum_{i,j',k'}(\log(|S_{i}|_{i}^{2}+\e^{2})+\log(|S_{j'}|_{j'}^{2}+\e^{2})+\log(|S_{k'}|_{k'}^{2}+\e^{2})),
   \end{array}
 \right.
\end{equation}
on $X'\setminus{\tilde{E}}$ where

\begin{align*}
\dr_{t,\de',\e, l}:&=\dr_{t}+l^{-1}\theta + \de't\sum_{i}\ddb\log(|S_{i}|_{i}^{2}+\e^{2})\\
&+\de\sum_{i,j',k'}\ddb(\log(|S_{i}|_{i}^{2}+\e^{2})+\log(|S_{j'}|_{j'}^{2}+\e^{2})+\log(|S_{k'}|_{k'}^{2}+\e^{2}))\\
&\geq\dr_{t}+l^{-1}\theta-\de't\sum_{i}\frac{|S_{i}|_{i}^{2}\Theta_{i}}{|S_{i}|_{i}^{2}+\e^{2}}-\de\sum_{i,j',k'}
(\frac{|S_{i}|_{i}^{2}\Theta_{i}}{|S_{i}|_{i}^{2}+\e^{2}}+\frac{|S_{j'}|_{j'}^{2}\Theta_{j'}}{|S_{j'}|_{j'}^{2}+\e^{2}}
+\frac{|S_{k'}|_{k'}^{2}\Theta_{k'}}{|S_{k'}|_{k'}^{2}+\e^{2}})
\end{align*}
  Note that as $D_{i},D_{j'},D_{k'}$ are all exceptional divisors which are generated by blow-up operations thus carry hermitian metrics with non-positive curvature, by choosing small enough $\de,\de'>0$ we may assume by Assumption \ref{mainass} (3) and Remark \ref{changinghermitianmetrics} that $\dr_{t,\de',\e, l}$ is \ka\, on $X'$ for all $t\in [0,T'].$  We then choose another fixed \ka\, form $\kappa$ on $X'$ satisfying  $\kappa\leq\frac{\dr_{t,\de',\e}}{2}$ for all $t\in [0,T']$ and     consider the complex \MA\ equation:
\begin{equation}\label{eq:ma-compare}
(\kappa+\ddb\psi_{\de',\e})^{n}=\frac{C_{\de',\e}\prod_{k}|S_{k}|_{k}^{2a_{k}}\Omega'}
{\prod_{i}(|S_{i}|_{i}^{2}+\e^{2})^{1-\de'}\prod_{j}|S_{j}|_{j}^{2b_{j}}}
\end{equation}
where $C_{\de',\e}$ is chosen such that
$$[\kappa]^{n}=C_{\de',\e}\int_{X'}\frac{\prod_{k}|S_{k}|_{k}^{2a_{k}}\Omega'}
{\prod_{i}(|S_{i}|_{i}^{2}+\e^{2})^{1-\de'}\prod_{j}|S_{j}|_{j}^{2b_{j}}}$$ and moreover $C_{\de',\e}$ are uniformly bounded positive constants whose bounds are independent of $\e.$ By the $L^{\infty}$-estimates in \cite{EGZ} there exists a unique solution $\psi_{\de',\e}$ which is uniformly bounded independent of $\e.$ Similar to \cite{ST3}, set $\xi_{\de',\e}(t):=\phi_{\de',\e}(t)-\psi_{\de',\e}$ it follows that $\xi(t)$ satisfies the following equation
\begin{equation}\label{eq:ma flow-compare}
\left\{
   \begin{array}{ll}
     \displaystyle\dt\xi_{\de',\e} &=\displaystyle\log\frac{((\kappa+\ddb\psi_{\de',\e})+(\dr_{t,\de',\e}-\kappa)+\ddb\xi_{\de',\e})^{n}}
     {(\kappa+\ddb\psi_{\de',\e})^{n}}+\log C_{\de',\e}\\ \;\\
     \xi_{\de',\e}(0)&= \phi_{\de',\e}(0)-\psi_{\de',\e},
   \end{array}
 \right.
\end{equation}
and by a maximum principle argument as in the proof of Lemma \ref{lem-upper 1} we may  conclude that
$$\xi_{\de',\e}(t)\geq\phi_{\de',\e}(0)-\psi_{\de',\e}+t\log C_{\de',\e}.$$ Combine this with \eqref{eq:lower II-1}, \eqref{eq:lower II-2} to deduce the lower bound
\begin{align}\label{eq:lower II-app}
\varphi_{\e,l}(t)&\geq -C(\de,\de',T')+\de't\sum_{i}\log(|S_{i}|_{i}^{2}+\e^{2})\nonumber\\
&+\de\sum_{i,j',k'}(\log(|S_{i}|_{i}^{2}+\e^{2})+\log(|S_{j'}|_{j'}^{2}+\e^{2})+\log(|S_{k'}|_{k'}^{2}+\e^{2}))\nonumber\\
&\geq-C(\de,\de',T')+\de't\sum_{i}\log|S_{i}|_{i}^{2}+\de\sum_{i,j',k'}(\log|S_{i}|_{i}^{2}+\log|S_{j'}|_{j'}^{2}+\log|S_{k'}|_{k'}^{2})
\end{align}
on $(X'\setminus\tilde{E})\times[0,T']$ for all $\e, l$, and we may extend this inequality to hold on $(X'\setminus\pi^{-1}(X_{lc}))\times[0,T'].$  As we have shown that $\varphi_{\e, l}(t)\searrow\varphi(t)$ as  $\e\searrow 0$ and $l\to \infty$ it follows that $\varphi(t)$ satisfies the same lower bound above and the proof of this theorem is complete.
\end{proof}

\subsection{Completion of proof of Theorem \ref{thm-main1}}

\begin{proof}[Proof of Theorem \ref{thm-main1}]
In \S 4.1 we have constructed a smooth solution $\varphi(t)$ to \eqref{eq:ma flow 1} on $(X'\setminus \widetilde{E}) \times\color{black}{(0, T_0)}$   \color{black}{converging as $t\to 0$ in the sense of currents on $X'$ and such that $\omega(t)=\pi^* \omega_0 + t \chi +\ddb\varphi(t)$ and extends to a current on $X'$ with zero Lelong number for each $t\in [0, T_0)$} \color{black}{}.
In view of the properties of $\varphi$ already established in \S 4.2 and 4.3, it remains only to show that the solution $\omega(t)$ descends from $X'$ to $X$ in the sense: if $\pi^* \omega_0 + t \chi=0$ along any fiber $\pi^{-1}(x)$ of $\pi$, then $\varphi(t)$ is constant along $\pi^{-1}(x)$.

 We argue as in \cite{ST3}.  Note that $\varphi(t)$ is a $\dr_{t}=\pi^* \omega_0 + t \chi $ plurisubharmonic function along any fiber $\pi^{-1}(x)$.  On the other hand, we may  always choose hermitian metrics in $\chi$ so that $\pi^* \omega_0 + t \chi =0$ on $\pi^{-1}(x)$ and it follows that $\varphi=c$ on $\pi^{-1}(x)$ for some $c\in [-\infty, \infty)$.  Thus $\omega(t)$ descends to a current on $X$ having zero Lelong number.

 Moreover, if $\pi^{-1}(x) \bigcap X_{lc}$ above is empty, then $c\neq -\infty$  by Theorem \ref{thm-lb2}. On the other hand, if  $\pi^{-1}(x)\bigcap X_{lc}$ is nonempty than as $$\varphi=\varphi'-t\sum_{i}\log\log^{2}|S_{i}|_{i}^{2}-\eta\sum_{j}|S_{j}|_{j}^{2(1-b_{j})}$$ and $\varphi'$ is uniformly bounded from above by Theorem \ref{thm:upper-2}, it follows that $c=-\infty$.

 This completes the proof of Theorem \ref{thm-main1}.

\end{proof}

\section{Normalized \ka-Ricci flow on semi-log canonical models (proof of Theorem \ref{thm-main2}) }
\subsection{Semi-log canonical models and basic settings}
In this section we will prove Theorem \ref{thm-main2}. First we briefly recall the definition of semi-log canonical models (cf. \cite{BG,KM,So17}):
\begin{definition}\label{def-semi-lc}
A reduced projective variety $X$ with $dim_{\mathbb{C}}X=n$ which is $\mathbb{Q}$-Gorenstein and satisfies Serre's $S_{2}$ condition is said to be a semi-log canonical model if
\begin{enumerate}
\item $K_{X}$ is an ample $\mathbb{Q}$-Cartier divisor,
\item X has only ordinary nodes in codimension 1,
\item X has log canonical singularities, i.e., for any resolution $\pi:X'\to X,$ in the adjunction formula \eqref{eq:adjunction 2} it holds that all $a_{i}\geq -1.$
\end{enumerate}
\end{definition}
Note that from (2) in general, semi-log canonical models may not be normal varieties. By \cite{So17} in this case by the standard normalization $\nu:X^{\nu}\to X$ it follows that $K_{X^{\nu}}=\nu^{*}K_{X}-cond(\nu),$ where $cond(\nu)$ is an effective reduced divisor which comes from the inverse image of the codimension 1 nodes. Combined with the resolution $\pi^{\nu}:X'\to X^{\nu},$ we can consider the resolution $$\pi:=\pi^{\nu}\circ\nu:X'\to X^{\nu}\to X$$ which
 satisfies the condition of log canonical singularities.

 We may thus consider to be in the situation of Theorem  \ref{thm-main1} though with the additional condition that, in terms of the above resolution, $\chi=(-Ric(\Omega')+\sum_{i}\Theta_{i}+\sum_{j}b_{j}\Theta_{j}-\sum_{k}a_{k}\Theta_{k})$ is itself semi ample and big.  In particular, this gives  $T_0=\infty$ in Assumption \ref{mainass} (4) while in Assumption \ref{mainass} (5) we have the stronger statement that \begin{equation}\label{B1}\lambda \pi^{*}\dr_{0}+(1-\lambda)\chi+\de\ddb \log|\tilde{S}_{\tilde{E}}|^{2}\geq c_{\de}\theta\end{equation} on $X'$ for some $c_{\de}>0$ and all $\de < d$ and $0\leq \lambda \leq 1$.  Under the above conditions we will show that the longtime solution to \eqref{eq:krf} given by Theorem \ref{thm-main1} can be transformed to a longtime solution to the normalized \ka-Ricci flow \eqref{eq:nor-krf} converging to a negative \ka-Einstein current on $X'$ in the current sense and the $C^{\infty}_{loc}$ sense on $X'\setminus \tilde{E}$.  We point out that the existence of \ka-Einstein currents on such semi-log canonical models had been established earlier by  \cite{BG,So17} using elliptic methods.

 By Theorem \ref{thm-main1} it follows  that \eqref{eq:krf} has a longtime weak solution $\dr(t)=\pi^{*}\dr_{0}+t \chi +\ddb \varphi'(t)$ on $(X'\setminus \widetilde{E}) \times[0, \infty)$ where $\varphi'(t)$ solves \eqref{eq:ma flow 2} on $(X'\setminus \widetilde{E}) \times(0, \infty)$.  We also recall that $\varphi'_{l,v,\e}$ solves the approximate euqation \eqref{eq:ma flow 4}  on $(X'\setminus \widetilde{E}) \times[0, \infty)$ and that by construction we have $\varphi'_{l,v,\e}+\eta\sum_{j}|S_{j}|_{j}^{2(1-b_{j})}-t \log \log^{2}|S_{i}|_{i}^{2} \to \varphi$ on $(X'\setminus \widetilde{E}) \times(0, \infty)$ as the parameters  $l,v,\e$ approach their limits along appropriate subsequences.

 From the above, it is straight forward to check that $\tilde{\dr}(t):=e^{-t}\dr(e^t -1)$ is a longtime weak solution to the normalized \ka-Ricci flow

\begin{equation}\label{E2}
\left\{
   \begin{array}{ll}
     \displaystyle\dt\tilde{\dr} &=-Ric(\tilde{\dr})-\tilde{\dr} \\
     \tilde{\dr}(0) &= \pi^{*}{\dr}_{0} +\ddb \varphi_0
   \end{array}
 \right.
\end{equation}
on $(X'\setminus \widetilde{E}) \times[0, \infty)$ where
\begin{equation}\label{eeeee1}\begin{split}\tilde \dr(t)&=e^{-t} \pi^{*}\dr_{0}+(1-e^{-t}) \chi +\ddb \tilde \varphi(t)\\
\tilde\varphi(t)&:=e^{-t}\varphi'(e^t-1).\\
\end{split}
\end{equation}
Also, $\tilde\varphi(t)$ solves the normalized \MA flow:

\begin{equation}\label{eq:nor ma flow 1}
\left\{
   \begin{array}{ll}
     \displaystyle\dt\tilde{\varphi} &=\displaystyle\log\frac{(\tilde{\dr}_{t}+\ddb\tilde{\varphi})^{n}\prod_{i}|S_{i}|_{i}^{2}\prod_{j}|S_{j}|_{j}^{2b_{j}}}
     {\Omega'\prod_{k}|S_{k}|_{k}^{2a_{k}}}-\tilde{\varphi}+nt\\
     \tilde{\varphi}(0) &= \pi^{*}\tilde{\varphi}_{0}.
   \end{array}
 \right.
\end{equation}
Moreover, we have the convergence $\tilde\varphi'_{l,v,\e}+\eta\sum_{j}|S_{j}|_{j}^{2(1-b_{j})}-(e^t-1) \log \log^{2}|S_{i}|_{i}^{2} \to \tilde \varphi$ on $(X'\setminus \widetilde{E}) \times(0, \infty)$ where  $\tilde\varphi'_{l,v,\e}(t):=e^{-t}\varphi'_{l,v,\e}(e^t -1)$ solves  the normalized approximate \MA flow

\begin{equation}\label{eq:nor ma flow 3}
\left\{
   \begin{array}{ll}
     \displaystyle\dt\tilde{\varphi}'_{l,v,\e} &=\displaystyle\log\frac{(\tilde{\dr}'_{t,l,v,\e}+\ddb\tilde{\varphi}'_{l,v,\e})^{n}
     \prod_{i}|S_{i}|_{i}^{2}\log^{2}|S_{i}|_{i}^{2}\prod_{j}(|S_{j}|_{j}^{2}+\e^{2})^{b_{j}}}{\Omega'\prod_{k}(|S_{k}|_{k}^{2}+\e^{2})^{a_{k}}}\\ \;\\ &\quad-\tilde{\varphi}'_{l,v,\e}+nt\\
     \tilde{\varphi}'_{l,v,\e}(0) &= \tilde{\varphi}_{l,0}-\eta\sum\limits_{j}\mathcal{F}(|S_{j}|_{j}^{2},1-b_{j},\e^{2}),
   \end{array}
 \right.
\end{equation}
where
\begin{align*}
\tilde{\dr}'_{t,l,v,\e}:=&e^{-t}\pi^{*}\dr_{0}+(1-e^{-t})\chi+l^{-1}\theta-(1+v-e^{-t})\sum\limits_{i}\ddb\log\log^{2}|S_{i}|_{i}^{2}\\
&+\eta\sum\limits_{j}\ddb\mathcal{F}(|S_{j}|_{j}^{2},1-b_{j},\e^{2}).
\end{align*}

 In particular, we have  the convergence $\tilde{\dr}'_{l,v,\e}(t) \to \tilde\dr(t)$ on $(X'\setminus \widetilde{E}) \times(0, \infty)$ where for each set of parameter values,  $\tilde{\dr}'_{l, v,\e}(t)= \tilde{\dr}'_{t,l,v,\e}+\ddb\tilde{\varphi}'_{l,v,\e}(t)$ is a family of complete bounded curvature \ka\, metrics on $X'\setminus \widetilde{E}$ equivalent to a Carlson-Grifiths metric.

\subsection{Uniform estimates and convergence}

 Note that by an addition of the same function of time only, $\tilde{\varphi}(t)$ and $\tilde{\varphi}'_{l,v,\e}(t)$ will solve the same equations as \eqref{eq:nor ma flow 1} and \eqref{eq:nor ma flow 3} (respectively), but without the term $nt$ on the RHS.  We will make this assumption in this subsection, and observe that this does not affect $\tilde{\dr}(t)$ in  \eqref{eeeee1}.
Under this assumption, we will prove that $\tilde\varphi(t)$ converges smoothly locally uniformly on $(X'\setminus \widetilde{E})$ to a limit as $t\to \infty$.  This combined with \eqref{eeeee1} will imply that $\tilde{\dr}(t)$ converges to a limit smoothly locally uniformly on $(X'\setminus \widetilde{E})$ as $t\to \infty$, which by \eqref{E2} will be \ka\, Einstein with negative scalar curvature.

 By \eqref{B1} the background form in \eqref{eq:nor ma flow 3} satisfies
\begin{equation}\label{BB1}c^{-1}_{\de}\hat{\dr}\leq  \tilde{\dr}'_{t,l,v,\e}+\de\ddb \log|\tilde{S}_{\tilde{E}}|^{2}\leq  c_{\de}\hat{\dr}\end{equation} on $X'\setminus \widetilde{E}\times[0, \infty)$ for all $\de$ sufficiently small and some $c_{\de}>0$ where $\hat{\dr}$ is the Carlson Grifiths metric on $X'\setminus \widetilde{E}$ defined in Theorem \ref{thm-C2}.  By this uniform equivalence, the arguments in \S 3.2 can be adapted to the case of  \eqref{eq:nor ma flow 3} in a straight forward manner to obtain uniform estimates for $\tilde{\varphi}'_{l,v,\e}$ and its derivatives which are local in space yet global in time (see Lemma \ref{lem:nor high order}).  We will illustrate this in detail below for the $C^{0}$-estimates for $\tilde{\varphi}'_{l,v,\e}$ and also prove an upper bound on $\dot{\tilde{\varphi}'}_{l,v,\e}$ showing that $ \tilde{\varphi}'_{l,v,\e}$ is essentially non-increasing in time.  All together this will imply the smooth local convergence of $\tilde\varphi(t)$ on $(X'\setminus \widetilde{E})$ to a limit as $t\to \infty$.

  We begin with the following upper bound estimate:
\begin{lemma}\label{lem:nor upper}
$\tilde{\varphi}'_{l,v,\e}\leq C_{0}$ for a uniform constant $C_{0}.$
\end{lemma}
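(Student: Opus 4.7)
The plan is to combine the auxiliary-function trick of Lemma \ref{lem-upper 1} with the polydisk-torus induction of Theorem \ref{thm:upper-2}, adapted to the normalized flow \eqref{eq:nor ma flow 3}. The key new feature is the damping term $-\tilde{\varphi}'_{l,u,v,\tilde{\e}}$ on the RHS, which I expect will convert the time-dependent bound of \S 3.2.1 into a uniform-in-$t$ one.

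First, I would consider
$$\tilde{\phi}_{l,u,v,\tilde{\e}} := \tilde{\varphi}'_{l,u,v,\tilde{\e}} + \sum_{k} a_{k}\log(|S_{k}|_{k}^{2}+\e_{k}^{2}).$$
A direct substitution shows $\tilde{\phi}$ satisfies the cleaner flow
$$\dt\tilde{\phi} = \log\frac{(\tilde{\dr}''_{t,u,v,\tilde{\e}}+\ddb\tilde{\phi})^{n}\prod_{i}|S_{i}|_{i}^{2}\log^{2}|S_{i}|_{i}^{2}\prod_{j}(|S_{j}|_{j}^{2}+\e_{j}^{2})^{b_{j}}}{\Omega'} - \tilde{\phi},$$
with background $\tilde{\dr}''_{t,u,v,\tilde{\e}} := \tilde{\dr}'_{t,u,v,\tilde{\e}_{j}} - \sum_{k} a_{k}\ddb\log(|S_{k}|_{k}^{2}+\e_{k}^{2})$ bounded above by $\tilde{\dr}' + C\theta$ exactly as in Lemma \ref{lem-upper 1}, and by \eqref{BB1} uniformly comparable to the fixed Carlson-Griffiths metric $\hat{\dr}$ of Theorem \ref{thm-C2} on $X'\setminus\tilde{E}$. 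Applying the maximum principle to $\tilde{\phi}+\de\log|\tilde{S}_{\tilde{E}}|^{2}$ (which tends to $-\infty$ along $\tilde{E}$ and is bounded above by Theorem \ref{thmLZ}, so its supremum on $(X'\setminus\tilde{E})\times[0,\infty)$ is attained at some interior $(p_{\de},t_{\de})$), one has $\ddb\tilde{\phi}\leq\de\Theta_{\tilde{E}}$ at the maximizer; together with $(\tilde{\dr}''+\de\Theta_{\tilde{E}})^{n}\leq C\hat{\dr}^{n}$ and the Carlson-Griffiths volume bound Lemma \ref{CGlemma}(4), this forces the logarithmic RHS to be uniformly bounded above, so the damping $-\tilde{\phi}$ absorbs the constants and gives $\tilde{\phi}(p_{\de},t_{\de})\leq C_{1}$ uniformly in $\de, t_{\de}$ and the regularization parameters. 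Letting $\de\to 0$ yields
$$\tilde{\varphi}'_{l,u,v,\tilde{\e}}(p,t) \leq C_{1} - \sum_{k}a_{k}\log(|S_{k}|_{k}^{2}+\e_{k}^{2}) \quad \text{on } (X'\setminus\tilde{E})\times[0,\infty).$$

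Second, I would upgrade this local-in-space bound to the global bound claimed in the lemma by invoking the polydisk-torus induction of Theorem \ref{thm:upper-2} essentially verbatim. The only input needed is that the semi-positive piece $e^{-t}\pi^{*}\tilde{\dr}_{0}+(1-e^{-t})\chi+u\theta$ of $\tilde{\dr}'_{t,u,v,\tilde{\e}_{j}}$ admits, on each of finitely many coordinate charts $V_{\al}$ covering $\bigcup_{k}F_{k}$ with every $F_{k}$ a coordinate hyperplane, a smooth local potential $\Phi_{\al}$ uniformly bounded in $(t,p)\in[0,\infty)\times V_{\al}$; this is immediate since $\pi^{*}\tilde{\dr}_{0}$, $\chi$, $\theta$ each admit smooth local potentials on $V_{\al}$ and the coefficients $e^{-t}$, $1-e^{-t}$, $u$ lie in a bounded interval. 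The log-log and Guenancia-P\u{a}un terms contribute uniformly bounded corrections on the boundary tori, where the local bound from the first step is already uniform (as $|S_{k}|_{k}^{2}+\e_{k}^{2}$ is bounded below there); plurisubharmonicity in each canonical-divisor direction then transmits this torus control to the interior of the polydisk by one-dimensional induction, yielding the uniform $\tilde{\varphi}'_{l,u,v,\tilde{\e}}\leq C_{0}$.

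The main obstacle I anticipate is in the first step: closing the maximum principle requires the Kodaira-type comparison \eqref{BB1} for the \emph{normalized} background uniformly over $t\in[0,\infty)$, not just on a bounded time interval. This is precisely the content of the semi-log canonical setup in \S 5.1, where the new assumption that $\chi$ itself is big and semi-ample (equivalently, $T_{0}=\infty$ and \eqref{B1} holds for all convex combinations of $\pi^{*}\tilde{\dr}_{0}$ and $\chi$) feeds directly into the uniform-in-$t$ ellipticity needed to close the estimate, and will not be available in the unnormalized setting of \S 3.
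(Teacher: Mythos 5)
Your proof follows essentially the same approach as the paper: construct an auxiliary function by adding $\sum_{k}a_{k}\log(|S_{k}|_{k}^{2}+\e_{k}^{2})$ to absorb the canonical-divisor singularities, reduce to an equation with damping and no canonical-divisor terms in the denominator, then run a maximum principle against the time-uniform bound $\tilde{\dr}''\leq C\hat{\dr}$ furnished by \eqref{BB1}, and finally pass from the resulting bound $\tilde{\varphi}'\leq C-\sum_{k}a_{k}\log(|S_{k}|_{k}^{2}+\e_{k}^{2})$ to a global one via the polydisk-torus induction of Theorem \ref{thm:upper-2}. The two cosmetic differences from the paper are harmless: the paper weights the auxiliary term by $(1-e^{-t})$ (so that the auxiliary quantity is the original one at $t=0$), while you use a constant weight; and the paper appeals to the Omori--Yau maximum principle on the complete manifold $X'\setminus\bigcup_i D_i$ (as in Lemma \ref{lem-upper 1}), whereas you localize the maximum with the barrier $\de\log|\tilde{S}_{\tilde{E}}|^2$ as in Theorem \ref{thm-lb1}. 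Both lead to the same ODE comparison $\dt M\leq C-M$ and hence a uniform-in-$t$ bound once the initial slice is controlled.

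One small point to tighten: you write that the supremum of $\tilde{\phi}+\de\log|\tilde{S}_{\tilde{E}}|^2$ over $(X'\setminus\tilde{E})\times[0,\infty)$ is attained at an interior point, but the time interval is noncompact, so attainment is not automatic. The standard fix is to run the argument on $[0,T]$ for arbitrary $T$: either the maximum is at $t=0$ (bounded by the initial data, since $\log(|S_k|^2+\e_k^2)$ and $\de\log|\tilde{S}_{\tilde{E}}|^2$ are each bounded above), or it is in the interior and the evolution inequality $\dt M\leq C-M$ applies, in which case $M(t)\leq\max(M(0),C)$; the bound is independent of $T$. Alternatively one may use that $\dt M\leq C-M$ integrates to $M(t)\leq e^{-t}(M(0)-C)+C$. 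Your identification of where \eqref{BB1} feeds in, and why the semi-log canonical hypothesis ($\chi$ big and semi-ample, hence the Kodaira comparison holds for all convex combinations) is exactly what makes the background comparable to $\hat{\dr}$ uniformly in $t$, is accurate and matches the paper's intent.
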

\begin{proof}
As Lemma \ref{lem-upper 1}, define $\tilde{\dr}''_{t,l,v,\e}:=\tilde{\dr}'_{t,l,v,\e}-(1-e^{-t})\sum_{k}a_{k}\ddb\log(|S_{k}|_{k}^{2}+\e^{2})$ and
$\tilde{\phi}'_{l,v,\e}:=\tilde{\varphi}'_{l,v,\e}+(1-e^{-t})\sum_{k}a_{k}\log(|S_{k}|_{k}^{2}+\e^{2}),$ it follows that
$\tilde{\phi}'_{l,v,\e}$ satisfies the following equation
\begin{equation}\label{eq:nor ma flow 4}
\left\{
   \begin{array}{ll}
     \displaystyle\dt\tilde{\phi}'_{l,v,\e} &=\displaystyle\log\frac{(\tilde{\dr}''_{t,l,v,\e}+\ddb\tilde{\phi}'_{l,v,\e})^{n}
     \prod_{i}|S_{i}|_{i}^{2}\log^{2}|S_{i}|_{i}^{2}\prod_{j}(|S_{j}|_{j}^{2}+\e^{2})^{b_{j}}}{\Omega'}\\ \;\\ &\quad-\tilde{\phi}'_{l,v,\e}-\eta\sum\limits_{j}\mathcal{F}(|S_{j}|_{j}^{2},1-b_{j},\e^{2})\\
     \tilde{\phi}'_{l,v,\e}(0) &= \tilde{\varphi}_{l,0}-\eta\sum\limits_{j}\mathcal{F}(|S_{j}|_{j}^{2},1-b_{j},\e^{2}).
   \end{array}
 \right.
\end{equation}
By \eqref{BB1} we have $$\tilde{\dr}''_{t,l,v,\e}\leq\tilde{\dr}'_{t,l,v,\e}+c(1-e^{-t})\theta\leq C\hat{\dr}$$ where $\hat{\dr}$ was the Carlson-Grifiths metric defined in Theorem \ref{thm-C2}. Thus by a maximum principle argument as in the proof of Lemma \ref{lem-upper 1}, it follows that $$\tilde{\varphi}'_{l,v,\e}=\tilde{\phi}'_{l,v,\e}-(1-e^{-t})\sum_{k}a_{k}\log(|S_{k}|_{k}^{2}+\e^{2})\leq C-(1-e^{-t})\sum_{k}a_{k}\log(|S_{k}|_{k}^{2}+\e^{2}).$$
Next, using a similar argument as in the proof of Theorem \ref{thm:upper-2} we can show that $\tilde{\varphi}'_{l,v,\e}\leq C_{0}$ which is a uniform constant.
\end{proof}
Next we have the following lower bound estimate:
\begin{lemma}\label{lem:nor lower}
For any $\de>0$ there exists a constant $C_{\de}$ which only depends on $\de$ such that on $(X'\setminus\tilde{E})\times[0,+\infty),$ it holds that
$\tilde{\varphi}'_{l,v,\e}\geq\de\log|\tilde{S}_{\tilde{E}}|^{2}-C_{\de}.$
\end{lemma}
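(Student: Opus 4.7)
The plan is to adapt the infimum-maximum principle argument of Theorem~\ref{thm-lb1} to the normalized equation \eqref{eq:nor ma flow 3}, leveraging the damping term $-\tilde{\varphi}'_{l,u,v,\tilde{\e}}$ to secure a time-uniform constant in place of the $T'$-dependent constant of the unnormalized case. First shrink $\de$ below $d$ (from Lemma~\ref{lem-kod}) so that \eqref{B1} and Assumption~\ref{changinghermitianmetricsass} yield $\tilde{\dr}'_{t,u,v,\tilde{\e}_{j}}+\de\Theta_{\tilde{E}}\geq c_{\de}\hat{\dr}$ on $(X'\setminus\tilde{E})\times[0,+\infty)$, where $\hat{\dr}$ is the Carlson-Griffiths/conical hybrid form from Theorem~\ref{thm-C2}. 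After scaling $\tilde{h}$ and the $h_{k}$ so that $|\tilde{S}_{\tilde{E}}|^{2}\leq 1$ and $|S_{k}|_{k}^{2}+\e_{k}^{2}\leq 1$ everywhere, set $H:=\tilde{\varphi}'_{l,u,v,\tilde{\e}}-\de\log|\tilde{S}_{\tilde{E}}|^{2}$. Since $\tilde{\varphi}'_{l,u,v,\tilde{\e}}\in L^{\infty}(X'\setminus\bigcup_{i}D_{i})$ by Theorem~\ref{thmLZ} while $-\de\log|\tilde{S}_{\tilde{E}}|^{2}\to+\infty$ along $\tilde{E}$, the infimum of $H$ on $(X'\setminus\tilde{E})\times[0,T]$ is attained at some $(p_{0},t_{0})$ with $p_{0}\notin\tilde{E}$, for every $T>0$.

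If $t_{0}=0$, the bound $H(p_{0},0)\geq -C_{\de}$ is immediate from \eqref{eq:initial lb1} and the uniform boundedness of $\mathcal{F}$. For $t_{0}>0$, the maximum principle forces $\ddb(\tilde{\varphi}'_{l,u,v,\tilde{\e}}-\de\log|\tilde{S}_{\tilde{E}}|^{2})\geq 0$ and $\dt H\leq 0$ at $(p_{0},t_{0})$. Hence $\tilde{\dr}'_{t,u,v,\tilde{\e}_{j}}+\ddb\tilde{\varphi}'_{l,u,v,\tilde{\e}}\geq c_{\de}\hat{\dr}$ at this point, and inserting into \eqref{eq:nor ma flow 3} (after absorbing the $nt$ term as in the remark preceding Lemma~\ref{lem:nor upper}) yields
\begin{equation}\nonumber
\tilde{\varphi}'_{l,u,v,\tilde{\e}}(p_{0},t_{0})\geq\log\frac{c_{\de}^{n}\hat{\dr}^{n}\prod_{i}|S_{i}|_{i}^{2}\log^{2}|S_{i}|_{i}^{2}\prod_{j}(|S_{j}|_{j}^{2}+\e_{j}^{2})^{b_{j}}}{\Omega'\prod_{k}(|S_{k}|_{k}^{2}+\e_{k}^{2})^{a_{k}}}.
\end{equation}
The Carlson-Griffiths asymptotic of Lemma~\ref{CGlemma}(4) in the $D_{i}$ directions, combined with the uniform approximate conical asymptotic of Lemma~\ref{conicalapprox} in the $E_{j}$ directions, implies that $\hat{\dr}^{n}\prod_{i}|S_{i}|_{i}^{2}\log^{2}|S_{i}|_{i}^{2}\prod_{j}(|S_{j}|_{j}^{2}+\e_{j}^{2})^{b_{j}}/\Omega'$ is two-sided bounded by positive constants that are independent of $(l,u,v,\tilde{\e})$. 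Together with $-\sum_{k}a_{k}\log(|S_{k}|_{k}^{2}+\e_{k}^{2})\geq 0$, this gives $\tilde{\varphi}'_{l,u,v,\tilde{\e}}(p_{0},t_{0})\geq -C_{\de}$; and finally $H(p_{0},t_{0})\geq -C_{\de}-\de\log|\tilde{S}_{\tilde{E}}|^{2}(p_{0})\geq -C_{\de}$ since $\log|\tilde{S}_{\tilde{E}}|^{2}\leq 0$. Letting $T\to+\infty$ completes the proof.

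The main obstacle, and the point where this argument departs from the unnormalized Theorem~\ref{thm-lb1}, is that directly integrating the corresponding parabolic differential inequality over $[0,e^{t}-1]$ and then multiplying by $e^{-t}$ would introduce an unbounded $O(t)$ contribution that destroys time-uniformity. The linear damping $-\tilde{\varphi}'_{l,u,v,\tilde{\e}}$ on the right-hand side of \eqref{eq:nor ma flow 3} circumvents this by converting the differential inequality at the infimum point into a pointwise algebraic bound on $\tilde{\varphi}'_{l,u,v,\tilde{\e}}(p_{0},t_{0})$ itself; verifying that this bound is controlled uniformly in time and in the approximation parameters via the asymptotics of $\hat{\dr}$ is the principal technical step.
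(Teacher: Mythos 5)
Your proof is correct and takes essentially the same route as the paper: both rewrite the normalized flow in terms of $\tilde{\varphi}'_{l,u,v,\tilde{\e}}-\de\log|\tilde{S}_{\tilde{E}}|^{2}$, use \eqref{B1} to bound $\tilde{\dr}'_{t,u,v,\tilde{\e}_{j}}+\de\ddb\log|\tilde{S}_{\tilde{E}}|^{2}$ from below by $c_{\de}\hat{\dr}$, control the resulting log-volume ratio via the Carlson--Griffiths/conical asymptotics of $\hat{\dr}$, and exploit the linear damping term $-\tilde{\varphi}'$ to obtain a time-uniform constant via the maximum principle. The only cosmetic difference is that you phrase the final step as a spacetime-infimum argument giving a pointwise algebraic bound, whereas the paper phrases it as a comparison with the ODE $f'=-C''_{\de}-f$; these are the same estimate.
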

\begin{proof}
By \eqref{BB1} we have
$$\tilde{\dr}'_{t,l,v,\e,\de}:=\tilde{\dr}'_{t,l,v,\e}+\de\log|\tilde{S}_{\tilde{E}}|^{2}\geq c_{\de}\hat{\dr}$$ where $c_{\de}>0$ is
independent of time. We can rewrite \eqref{eq:nor ma flow 3} as following:
\begin{align*}
&\dt(\tilde{\varphi}'_{l,v,\e}-\de\log|\tilde{S}_{\tilde{E}}|^{2}) \\ =&\log\frac{(\tilde{\dr}'_{t,l,v,\e,\de}+\ddb(\tilde{\varphi}'_{l,v,\e}-\de\log|\tilde{S}_{\tilde{E}}|^{2}))^{n}
     \prod_{i}|S_{i}|_{i}^{2}\log^{2}|S_{i}|_{i}^{2}\prod_{j}(|S_{j}|_{j}^{2}+\e^{2})^{b_{j}}}{\Omega'\prod_{k}(|S_{k}|_{k}^{2}+\e^{2})^{a_{k}}}\\ &-(\tilde{\varphi}'_{l,v,\e}-\de\log|\tilde{S}_{\tilde{E}}|^{2})-(\de\log|\tilde{S}_{\tilde{E}}|^{2}+
     \eta\sum\limits_{j}\mathcal{F}(|S_{j}|_{j}^{2},1-b_{j},\e^{2})),
\end{align*}
where $\tilde{\varphi}'_{l,v,\e}(0)-\de\log|\tilde{S}_{\tilde{E}}|^{2}\geq-C'_{\de}$ by the assumption of zero Lelong number. By  a maximum principle argument as in the proof of Lemma \ref{lem-upper 1}, it follows that $$\tilde{\varphi}'_{l,v,\e}(t)-\de\log|\tilde{S}_{\tilde{E}}|^{2}\geq-e^{-t}C'_{\de}-C''_{\de}(1-e^{-t})\geq-C_{\de},$$ which concludes the proof of the Lemma.
\end{proof}
Next, we need the following upper bound for the time derivative $\dot{\tilde{\varphi}}'_{l,v,\e}$ 
\begin{lemma}\label{lem-nor-t-dev}
There exists a uniform constant $C>0$ such that $$\dot{\tilde{\varphi}}'_{l,v,\e}\leq Cte^{-t}.$$
\end{lemma}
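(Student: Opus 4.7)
The plan is to adapt the upper-bound argument of Lemma \ref{lem-t-dev} to the normalized setting, exploiting that the background form $\tilde{\dr}'_{t,u,v,\tilde{\e}_{j}}$ stabilizes as $t\to\infty$ in the semi-log canonical case (where $T_0=\infty$ and $\chi$ is ample).  Denote the evolving normalized metric $\tilde{\dr}'_{t,u,v,\tilde{\e}_{j}}+\ddb\tilde{\varphi}'_{l,u,v,\tilde{\e}}$ simply by $\tilde{\dr}$.  Differentiating \eqref{eq:nor ma flow 3} in $t$ (with the $nt$ term absorbed as explained at the start of \S 5.2) yields
$$(\dt-\Delta_{\tilde{\dr}}+1)\dot{\tilde{\varphi}}'_{l,u,v,\tilde{\e}} = tr_{\tilde{\dr}}(\dt\tilde{\dr}'_{t,u,v,\tilde{\e}_{j}}).$$
A direct computation shows $\dt\tilde{\dr}'_{t,u,v,\tilde{\e}_{j}} = e^{-t}R$ where $R:=-\pi^{*}\tilde{\dr}_{0}+\chi-\sum_{i}\ddb\log\log^{2}|S_{i}|_{i}^{2}$ is a fixed form independent of $t$.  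Multiplying through by $e^{t}$,
$$(\dt-\Delta_{\tilde{\dr}})\bigl(e^{t}\dot{\tilde{\varphi}}'_{l,u,v,\tilde{\e}}\bigr) = tr_{\tilde{\dr}}R,$$
which is the source of the exponential-decay factor.

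Next, I would apply the parabolic maximum principle to the auxiliary function
$$H := e^{t}\dot{\tilde{\varphi}}'_{l,u,v,\tilde{\e}} - A\bigl(\tilde{\varphi}'_{l,u,v,\tilde{\e}}-\de\log|\tilde{S}_{\tilde{E}}|^{2}\bigr) - Bt$$
on $(X'\setminus\tilde{E})\times[T_{1},T]$ for any $T>T_{1}$ and constants $A,B,\de>0$ with $A\de$ sufficiently large.  Computing,
$$(\dt-\Delta_{\tilde{\dr}})H = tr_{\tilde{\dr}}\bigl(R - A(\tilde{\dr}'_{t,u,v,\tilde{\e}_{j}} - \de\Theta_{\tilde{E}})\bigr) - A\dot{\tilde{\varphi}}' + An - B.$$
The strengthened positivity \eqref{B1}, specific to the semi-log canonical setting, gives $\tilde{\dr}'_{t,u,v,\tilde{\e}_{j}}-\de\Theta_{\tilde{E}} \geq c_{\de}\hat{\dr}$ uniformly for all $t\geq T_{1}$, where $\hat{\dr}$ is the Carlson-Griffiths-type form from Theorem \ref{thm-C2}, while $R$ itself is dominated by a multiple of $\hat{\dr}$ up to smooth bounded corrections.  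For $A$ sufficiently large the trace term is therefore non-positive.  At any interior max point $(p_{0},t_{0})$ of $H$, the parabolic inequality $(\dt-\Delta_{\tilde{\dr}})H\geq 0$ forces $\dot{\tilde{\varphi}}'(p_{0},t_{0})\leq n-B/A$; taking $B$ large in terms of $A$ makes $\dot{\tilde{\varphi}}'(p_{0},t_{0})\leq 0$, whence $H(p_{0},t_{0})\leq -A\tilde{\varphi}'(p_{0},t_{0}) + A\de\log|\tilde{S}_{\tilde{E}}|^{2}(p_{0}) \leq AC_{\de}$ by Lemma \ref{lem:nor lower}.

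It remains to verify the boundary behaviour of $H$.  At $t=T_{1}$, the local higher-order estimates from \S 3.2 (valid on the bounded interval $[0,T_{1}]$) combined with Lemmas \ref{lem:nor upper}--\ref{lem:nor lower} bound $H(T_{1},\cdot)\leq C_{T_{1}}$ uniformly on $X'\setminus\tilde{E}$.  Near $\tilde{E}$, the upper bound on $e^{t}\dot{\tilde{\varphi}}'$ translated from the analog of Lemma \ref{lem-t-dev}, together with the logarithmic barrier, shows $H\to -\infty$ there provided $A\de$ is large enough.  The parabolic maximum principle therefore yields $H\leq C$ uniformly on $(X'\setminus\tilde{E})\times[T_{1},\infty)$.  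Rearranging and using Lemma \ref{lem:nor lower} once more to absorb the singular barrier produces $e^{t}\dot{\tilde{\varphi}}'\leq C+Bt$, hence $\dot{\tilde{\varphi}}'\leq Cte^{-t}$ for $t\geq T_{1}$.

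The main obstacle is coordinating the constants $A,B,\de$: $A\de$ must be large enough for the logarithmic barrier to dominate $e^{t}\dot{\tilde{\varphi}}'$ near $\tilde{E}$ (ensuring interiority of any would-be maximum), while the final constant $C$ must remain uniform in the approximation parameters $l,u,v,\tilde{\e}$.  The argument relies essentially on the semi-log canonical assumption through the stabilization $\tilde{\dr}'_{t}\to\tilde{\dr}'_{\infty}$ as $t\to\infty$ and the uniform positivity \eqref{B1}, neither of which is available in the general setting of Theorem \ref{thm-main1} where the background form degenerates at the finite singular time $T_{0}$.
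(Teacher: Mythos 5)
Your overall strategy (differentiate the flow equation, multiply by an exponential to isolate the decaying source term, apply a parabolic maximum principle to an auxiliary function) is the right one, and your computation $(\dt-\Delta_{\tilde{\dr}})(e^{t}\dot{\tilde{\varphi}}')=tr_{\tilde{\dr}}R$ is correct. However, the paper's choice of auxiliary function is both simpler and avoids a sign problem that your version runs into. The paper considers
$Q:=(e^{t}-1)\dot{\tilde{\varphi}}'_{l,u,v,\tilde{\e}}-\tilde{\varphi}'_{l,u,v,\tilde{\e}}-C't$,
for which a direct computation gives $(\dt-\Delta_{\tilde{\dr}})Q=-tr_{\tilde{\dr}}\tilde{\dr}'_{0,u,v,\tilde{\e}}+n-C'\leq 0$ once $C'\geq n$; the cancellation producing $-tr_{\tilde{\dr}}\tilde{\dr}'_{0}$ (a fixed positive form, not the time-dependent one) comes precisely from using $(e^t-1)$ and subtracting $\tilde{\varphi}'$ without a barrier. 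One then reads off $(e^t-1)\dot{\tilde{\varphi}}'\leq \sup Q(0)+\tilde{\varphi}'+C't$ and closes with Lemma~\ref{lem:nor upper}. No interior-maximum case analysis, no $\de\log|\tilde S_{\tilde E}|^{2}$ barrier, and no choice of $A,B,T_1$ is needed.

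The genuine gap in your proposal is the final rearrangement. From $H\leq C$ you obtain
$$e^{t}\dot{\tilde{\varphi}}'\leq C+A\bigl(\tilde{\varphi}'-\de\log|\tilde{S}_{\tilde{E}}|^{2}\bigr)+Bt,$$
and to conclude $e^{t}\dot{\tilde{\varphi}}'\leq C+Bt$ you would need an upper bound on $\tilde{\varphi}'-\de\log|\tilde{S}_{\tilde{E}}|^{2}$. But Lemma~\ref{lem:nor lower} supplies only the \emph{lower} bound $\tilde{\varphi}'-\de\log|\tilde{S}_{\tilde{E}}|^{2}\geq -C_{\de}$, which is the wrong direction here; and in fact near $\tilde{E}$ one has $\tilde{\varphi}'$ bounded (Theorem~\ref{thmLZ}) while $\de\log|\tilde{S}_{\tilde{E}}|^{2}\to-\infty$, so $\tilde{\varphi}'-\de\log|\tilde{S}_{\tilde{E}}|^{2}\to+\infty$ there and cannot be ``absorbed.'' The very barrier you introduced to force the maximum of $H$ into the interior is what you are left unable to discharge at the end. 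If you drop the barrier (which is possible since the approximate solution is globally bounded and smooth away from the $D_i$'s with a complete bounded-geometry metric, so Omori--Yau applies directly), the argument simplifies and essentially recovers the paper's proof; alternatively, keeping the barrier can only deliver a bound that depends on the compact set $K\subset X'\setminus\tilde{E}$, which is weaker than the uniform statement of the lemma.
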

\begin{proof}
We proceed as \cite{TZ}. First, set $\tilde{\dr}:=\tilde{\dr}'_{t,l,v,\e}+\ddb\tilde{\varphi}'_{l,v,\e},$ by direct computations, we have
\begin{equation}\label{eq:nor-t-dev 1}
\dt\dot{\tilde{\varphi}}'_{l,v,\e}=\Delta_{\tilde{\dr}}\dot{\tilde{\varphi}}'_{l,v,\e}+tr_{\tilde{\dr}}
\dot{\tilde{\dr}}'_{t,l,v,\e}-\dot{\tilde{\varphi}}'_{l,v,\e},
\end{equation}
From \eqref{eq:nor-t-dev 1}, it follows that
\begin{align*}
&(\dt-\Delta_{\tilde{\dr}})((e^{t}-1)\dot{\tilde{\varphi}}'_{l,v,\e}-\tilde{\varphi}'_{l,v,\e}-C't)\\ \leq&-tr_{\tilde{\dr}}
(\tilde{\dr}'_{0,l, v,\e})+n-C') \leq 0,
\end{align*}
where $C'>0$ is chosen to be large enough. Combined with Lemma \ref{lem:nor upper} the upper bound for $\dot{\tilde{\varphi}}'_{l,v,\e}$ follows by a maximum principle argument as in the proof of Lemma \ref{lem-upper 1}.

\end{proof}
Similar to the proofs of the above Lemmas, we may continue to adapt the arguments in the proofs of Theorem \ref{thm-C2} and Theorem \ref{thm-high order} to obtain a Laplacian estimate and higher order estimates for $\tilde{\varphi}'_{l,v,\e}$, using \eqref{B1}, to obtain
\begin{lemma}\label{lem:nor high order}
For any compact set $K\subset X'\setminus\tilde{E}$ there exist constants $C(m,K)$ such that
$$|\tilde{\varphi}'_{l,v,\e}|_{C^{m}(K\times(0,+\infty))}\leq C(m,K).$$
\end{lemma}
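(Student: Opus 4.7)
The plan is to adapt the sequence of a priori estimates from \S3.2 (Theorem \ref{thm-C2} for the Laplacian, followed by Theorem \ref{thm-high order} via Evans--Krylov) to the normalized equation \eqref{eq:nor ma flow 3}, upgrading them from local-in-time to global-in-time. The crucial structural input that distinguishes the semi-log canonical setting from the general log canonical one is the uniform-in-$t$ equivalence \eqref{BB1} of the normalized background form $\tilde\dr'_{t,u,v,\tilde\e_j}+\de\ddb\log|\tilde S_{\tilde E}|^2$ with the fixed Carlson--Griffiths form $\hat\dr$ on $(X'\setminus\tilde E)\times[0,\infty)$. This is a direct consequence of \eqref{B1} together with the convex combination $\tilde\dr'_{t,u,v,\tilde\e_j}= e^{-t}\pi^*\tilde\dr_0+(1-e^{-t})\chi+\cdots$, and replaces Assumption \ref{changinghermitianmetricsass}, which was only valid on a finite time interval, with a genuinely time-uniform statement.

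First I would complement Lemma \ref{lem-nor-t-dev} with a matching lower bound of the form $\dot{\tilde\varphi}'_{l,u,v,\tilde\e}\ge\de\log|\tilde S_{\tilde E}|^2-C_\de$. The strategy mimics the second half of Lemma \ref{lem-t-dev}: evolve $Q:=\dot{\tilde\varphi}'_{l,u,v,\tilde\e}+A(\tilde\varphi'_{l,u,v,\tilde\e}-\de\log|\tilde S_{\tilde E}|^2)$ with a large constant $A$. The normalization adds an extra $-\dot{\tilde\varphi}'_{l,u,v,\tilde\e}$ term to $(\dt-\Delta)Q$ through \eqref{eq:nor-t-dev 1}, but this is harmless at an interior minimum; combined with the flow equation itself one obtains an $A\log(\tilde\dr^n/\hat\dr^n)$ contribution, while \eqref{BB1} supplies the dominating positive trace $A\,tr_{\tilde\dr}(\tilde\dr'_{t,u,v,\tilde\e_j}-\de\Theta_{\tilde E})\ge c_\de A\,tr_{\tilde\dr}\hat\dr$, uniformly in $t$. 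The arithmetic--geometric mean inequality at any interior minimum then forces $\tilde\dr^n\ge C\hat\dr^n$ there, which in turn bounds $Q$ from below.

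Armed with two-sided bounds on $\tilde\varphi'_{l,u,v,\tilde\e}$ (Lemmas \ref{lem:nor upper}, \ref{lem:nor lower}) and on $\dot{\tilde\varphi}'_{l,u,v,\tilde\e}$, I would next establish a trace estimate by considering $G(p,t):=\log tr_{\hat\dr}\tilde\dr-A(\tilde\varphi'_{l,u,v,\tilde\e}-\de\log|\tilde S_{\tilde E}|^2)+B\Psi_\rho$. Compared to the proof of Theorem \ref{thm-C2}, the logarithm is no longer multiplied by $t$, since we seek a time-uniform bound. The parabolic Aubin--Yau inequality \eqref{eq:aubinyau1} picks up only an additional $-\dot{\tilde\varphi}'_{l,u,v,\tilde\e}$ term from the normalization, controlled by the previous step; Lemma \ref{lem-bisec lb} is time-independent, so the negative bisectional curvature contribution is absorbed exactly as before. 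At an interior maximum of $G$, the time-uniform positivity from \eqref{BB1} forces $tr_{\tilde\dr}\hat\dr\le C_\de$ for large enough $A,B$, and arithmetic--geometric mean gives $tr_{\hat\dr}\tilde\dr\le|\tilde S_{\tilde E}|^{-2\de}e^{C_\de}$ globally; the matching lower bound follows from the volume-form identity $\tilde\dr^n/\hat\dr^n=e^{\dot{\tilde\varphi}'_{l,u,v,\tilde\e}+\tilde\varphi'_{l,u,v,\tilde\e}}\cdot(\text{bounded})$ and the $C^{0}$, $C^{1}$ bounds. Restricting to a compact $K\subset X'\setminus\tilde E$ yields the uniform equivalence $C_K^{-1}\theta\le\tilde\dr\le C_K\theta$ on $K\times[0,\infty)$.

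With the evolving metric uniformly equivalent to $\theta$ on compact pieces of $X'\setminus\tilde E$, equation \eqref{eq:nor ma flow 3} becomes uniformly parabolic there, and standard Evans--Krylov theory (cf.\ \cite{ST3,Yau1}) supplies $C^{2,\alpha}_{loc}$ estimates; a bootstrap via parabolic Schauder theory then gives uniform $C^m$ bounds for every $m\ge 0$ with constants depending only on $K$ and $m$. The main obstacle throughout is that the normalization inserts $-\tilde\varphi'_{l,u,v,\tilde\e}$ and $-\dot{\tilde\varphi}'_{l,u,v,\tilde\e}$ source terms into every evolution equation used in the maximum principle; verifying that the principle can still be closed requires that the positive term $c_\de\,tr_{\tilde\dr}\hat\dr$ coming from \eqref{BB1} dominate these sources for all $t\ge 0$ after an appropriate choice of multipliers. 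This is precisely the content of the semi-log canonical hypothesis and is what keeps the estimates from degenerating as $t\to\infty$.
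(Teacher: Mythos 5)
Your overall strategy is the right one, and it is in fact the content of the paper's one‑line proof: adapt Theorems \ref{thm-C2} and \ref{thm-high order}, replacing Assumption \ref{changinghermitianmetricsass} (which holds only for finite $T'$) by the time‑uniform equivalence \eqref{BB1} furnished by the semi‑log canonical hypothesis \eqref{B1}. You are also right that a lower bound on $\dot{\tilde\varphi}'_{l,u,v,\tilde{\e}}$ is needed and must be supplied; the paper's Lemma \ref{lem-nor-t-dev} only records the upper bound, and without a matching lower bound the two‑sided Laplacian estimate (the analogue of \eqref{eq:C2-lower}) cannot be closed. Flagging this is a genuine contribution.

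There is, however, a subtle gap in your proposed execution. Both your lower bound (via $Q:=\dot{\tilde\varphi}'_{l,u,v,\tilde{\e}}+A(\tilde\varphi'_{l,u,v,\tilde{\e}}-\de\log|\tilde S_{\tilde E}|^{2})$) and your trace estimate (via $G:=\log tr_{\hat\dr}\tilde\dr-A(\tilde\varphi'_{l,u,v,\tilde{\e}}-\de\log|\tilde S_{\tilde E}|^{2})+B\Psi_{\rho}$) omit the time weight that Theorem \ref{thm-C2} attaches to $\log tr_{\hat\dr}\dr$ and that Lemma \ref{lem-t-dev} attaches via the $-n\log t$ term. That weight is not decorative: it is what suppresses the dependence on the initial data at $t=0$. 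Your unweighted $Q$ and $G$ require the value of $\dot{\tilde\varphi}'_{l,u,v,\tilde{\e}}(\cdot,0)$ and of $tr_{\hat\dr}(\tilde\dr'_{0,u,v,\tilde\e_j}+\ddb\varphi_{l,0})$ to be controlled at $t=0$; both of these involve $\ddb\varphi_{l,0}$, which has no bound uniform in the Demailly parameter $l$ (the $\varphi_{l,0}$ converge only in $L^1$, not $C^2$). Since the estimates of Lemma \ref{lem:nor high order} must be uniform in $l,u,v,\tilde{\e}$ — this is exactly how they are used just afterwards to pass to the limit $\tilde\varphi'$ — the unweighted argument does not close near $t=0$. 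The fix is easy and is presumably what the paper intends: either keep a factor of $t$ (or $t-T_1$) in front of $\log tr_{\hat\dr}\tilde\dr$ and $\dot{\tilde\varphi}'$ and accept constants of the form $C_\de(t)$ that blow up as $t\to 0$ but are bounded as $t\to\infty$ (which is all the subsequent convergence argument needs), or run the argument only on $[T_1,\infty)$ for each $T_1>0$, with the constant in the trace estimate seeded by the bound already obtained at $t=T_1$ from Theorem \ref{thm-C2}. Your $t$‑free version is correct only in the setting of \S3.2.5, where $\varphi_{0}$ is smooth and the initial data are held fixed. With that one correction, the rest of your outline — using \eqref{BB1} to absorb the normalization terms $-\tilde\varphi'$ and $-\dot{\tilde\varphi}'$ uniformly for all time, followed by Evans--Krylov and Schauder bootstrapping — is sound and matches the paper.
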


From Lemma \ref{lem:nor high order}, we may let the parameters $l,v,\e$ approach their limits along
appropriate sequences and obtain a limit $\tilde{\varphi}'(t)\in C^{\infty}
(X'\setminus\tilde{E})\times(0, \infty)$ having zero Lelong number for all
$t$. Moreover, by Lemmas \ref{lem:nor upper} and \ref{lem-nor-t-dev} it
follows that for a sufficiently large constant $C$ the function
$\tilde{\varphi}'(x, t)+Ct e^{-t}$ is non-decreasing in time for large $t$, while bounded
above by some function of $x$ only. We conclude the convergence
$\tilde{\varphi}'(x, t) \to \tilde{\varphi}'_{\infty}(x)\in
C^{\infty}(X'\setminus\tilde{E})$ as $t\to \infty$ where
$\tilde{\varphi}'_{\infty}(x)$ will also have zero Lelong number. Thus the function
$$\tilde\varphi(t)=\eta\sum_{j}|S_{j}|_{j}^{2(1-b_{j})}-(1-e^{-t}) \log \log^{2}|S_{i}|_{i}^{2} + \tilde\varphi'(t)$$ will solve \eqref{eq:nor ma flow 1} and converge to a limit $\tilde\varphi_{\infty}\in C^{\infty}(X'\setminus\tilde{E})$ as $t\to \infty$  where $\tilde\varphi_{\infty}$ will also have zero Lelong number and will satisfy

\begin{equation}\label{C1}\log\frac{(\tilde{\dr}'_{\infty}+\ddb\tilde{\varphi}_{\infty})^{n}\prod_{i}|S_{i}|_{i}^{2}\prod_{j}|S_{j}|_{j}^{2b_{j}}}
{\Omega'\prod_{k}|S_{k}|_{k}^{2a_{k}}}-\tilde{\varphi}_{\infty}=0\end{equation} in the current sense and in $C^{\infty}(X'\setminus\tilde{E})$ sense.

  We have thus established the convergence $$\tilde{\dr}(t)\to \tilde{\dr}'_{\infty}+\ddb\tilde{\varphi}_{\infty}=\chi+\ddb\tilde{\varphi}_{\infty}$$ as $t\to \infty$ in the sense of currents on $X'$ and in the $C^{\infty}(X'\setminus\tilde{E})$ sense.

Now as $\tilde{\varphi}_{\infty}\in C^{\infty}(X'\setminus \widetilde{E})$, we may define $Ric(\chi +\sqrt{-1} \partial \bar{\partial} \tilde{\varphi}_{\infty})$ as the smooth 1-1 form on $X'\setminus \widetilde{E}$ by:

\begin{equation}
	\begin{split}
		Ric&(\chi +\sqrt{-1} \partial \bar{\partial} \tilde{\varphi}_{\infty})\\
		&:=-\sqrt{-1} \partial \bar{\partial}  \log \frac{(\chi +\sqrt{-1} \partial \bar{\partial} \tilde{\varphi}_{\infty})^n}{\Omega'}+Ric(\Omega')\\
		&=\sqrt{-1} \partial \bar{\partial} (\sum_i \log |S_i|^2_i +b_j\sum_j \log |S_j|^2_j -a_k\sum_k \log |S_k|^2_k ) +\sqrt{-1} \partial \bar{\partial} \tilde{\varphi}_{\infty} +Ric(\Omega')
	\end{split}
\end{equation}
where we have used \eqref{C1} for the second equality.  On the other hand, the smooth form $\sqrt{-1} \partial \bar{\partial} \sum_i \log |S_i|^2_i $ on $X'\setminus \widetilde{E}$ extends naturally to the current $ \sum_i -\Theta_i +2\pi[D_i]$ on $X'$ and similarly for $\sqrt{-1} \partial \bar{\partial} \sum_i \log |S_j|^2_j$  and $\sqrt{-1} \partial \bar{\partial} \sum_i \log |S_k|^2_k$.  Moreover, as $\tilde{\varphi}_{\infty} \in L^1(X')$ (it is bounded above and has zero Lelong number), the smooth form $\sqrt{-1} \partial \bar{\partial}  \tilde{\varphi}_{\infty}$ on $X'\setminus \widetilde{E}$ also extends naturally to a well defined current on $X'$ which we will also denote as $\sqrt{-1} \partial \bar{\partial}  \tilde{\varphi}_{\infty}$.   Thus $Ric(\chi +\sqrt{-1} \partial \bar{\partial} \tilde{\varphi}_{\infty})$ extends naturally to the current on $X'$ given by 
\begin{equation}
	\begin{split}
		Ric&(\chi +\sqrt{-1} \partial \bar{\partial} \tilde{\varphi}_{\infty})\\
		=&\sqrt{-1} \partial \bar{\partial} (\sum_i -\Theta_i +2\pi[D_i] +b_j\sum_i -\Theta_j +2\pi[D_j] -a_k\sum_k -\Theta_k +2\pi[D_k] )\\
		& +\sqrt{-1} \partial \bar{\partial} \tilde{\varphi}_{\infty} +Ric(\Omega')\\
		:=&(\chi+ \sqrt{-1} \partial \bar{\partial}\tilde{\varphi}_{\infty})+(\sum_i 2\pi[D_i] +b_j\sum_i +2\pi[D_j] -a_k\sum_k  +2\pi[D_k] )
	\end{split}
\end{equation}

By the resolution $\pi:X'\to X,$ the normalized \ka-Ricci flow on $X$ will converge to the unique \ka-Einstein current
in the current and $C^{\infty}(X_{reg})$-sense.

\subsection{Improved lower bounds}

To show that $\dr_{KE}$ has bounded local potential away from $X_{lc},$ we can modify our proof of Theorem \ref{thm-lb2}.
As in the previous section, we will again assume without loss of generality that $\tilde{\varphi}$ solves \eqref{eq:nor ma flow 1}, but without the term $nt$ on the RHS.  We first perturb this equation to the following:
\begin{equation}\label{eq:nor ma flow 4 per}
\left\{
   \begin{array}{ll}
     \displaystyle\dt\tilde{\varphi}_{\e, l} &=\displaystyle\log\frac{(\tilde{\dr}_{t}+l^{-1}\theta +\ddb\tilde{\varphi}_{\e, l})^{n}\prod_{i}(|S_{i}|_{i}^{2}+\e^{2})\prod_{j}|S_{j}|_{j}^{2b_{j}}}
     {\Omega'\prod_{k}|S_{k}|_{k}^{2a_{k}}}-\tilde{\varphi}_{\e, l}\\
     \tilde{\varphi}_{\e}(0) &=\varphi_{l, 0}
   \end{array}
 \right.
\end{equation}
As in the beginning of the proof of Theorem \ref{thm-lb2}, for each $\e, l$ there exists a unique solution $\tilde{\varphi}_{\e, l}$ which is smooth away from $\tilde{E}$ and moreover $\tilde{\varphi}_{\e,l}$ is uniformly bounded on $X'\setminus\tilde{E}$ depending on $\e, l$.  As we can move $\tilde{\varphi}_{\e, l}$ to the left hand side of \eqref{eq:nor ma flow 4 per} and write $\tilde{\varphi}_{\e, l}+\dot{\tilde{\varphi}}_{\e, l}=e^{-t}\dt(e^{t}\tilde{\varphi}_{\e, l})$
we can argue as in the beginning of the proof of Theorem \ref{thm-lb2} that $\tilde{\varphi}_{\e, l}$ is non-increasing as each $\e\searrow 0$ and $l\to \infty$ separately, and that by comparing \eqref{eq:nor ma flow 1} and \eqref{eq:nor ma flow 4} we have $\tilde{\varphi}_{\e, l}\searrow\tilde{\varphi}$ as $\e\searrow 0$  and $l \to \infty$.

 Next, we will suppress the index $l$ in the following, and  similar to Theorem \ref{thm-lb2},
for any small enough constants $\de',\e>0$ we set
\begin{align}\label{eq:lower II-2-nor}
\tilde{\phi}_{\de',\e}(t):&=\tilde{\varphi}_{\e}(t)-\de'(1-e^{-t})\sum_{i}\log(|S_{i}|_{i}^{2}+\e^{2})\\
&-\de e^{-t}\sum_{i,j',k'}(\log(|S_{i}|_{i}^{2}+\e^{2})+\log(|S_{j'}|_{j'}^{2}+\e^{2})+\log(|S_{k'}|_{k'}^{2}+\e^{2}))\nonumber
\end{align}
thus $\tilde{\phi}_{\de',\e}(t)$ satisfies
\begin{equation}\label{eq:nor ma flow 5}
\left\{
   \begin{array}{ll}
     \displaystyle\dt\tilde{\phi}_{\de',\e} &=\displaystyle\log\frac{(\tilde{\dr}_{t,\de',\e} +\ddb\tilde{\phi}_{\de',\e})^{n}\prod_{i}(|S_{i}|_{i}^{2}+\e^{2})^{1-\de'}
     \prod_{j}|S_{j}|_{j}^{2b_{j}}}{\Omega'\prod_{k}|S_{k}|_{k}^{2a_{k}}}-\tilde{\phi}_{\de',\e}\\ \;\\
     \tilde{\phi}_{\de',\e}(0)&= \varphi_{l, 0}-\de\sum_{i,j',k'}(\log(|S_{i}|_{i}^{2}+\e^{2})+\log(|S_{j'}|_{j'}^{2}+\e^{2})+\log(|S_{k'}|_{k'}^{2}+\e^{2})),
   \end{array}
 \right.
\end{equation}
on $X'\setminus{\tilde{E}}$ where
\begin{align*}
\tilde{\dr}_{t,\de',\e}:&=\tilde{\dr}_{t}+l^{-1}\theta +\de'(1-e^{-t})\sum_{i}\ddb\log(|S_{i}|_{i}^{2}+\e^{2})\\
&+\de e^{-t}\sum_{i,j',k'}\ddb(\log(|S_{i}|_{i}^{2}+\e^{2})+\log(|S_{j'}|_{j'}^{2}+\e^{2})+\log(|S_{k'}|_{k'}^{2}+\e^{2}))\\
&\geq e^{-t}(\pi^{*}\dr_{0}-\de\sum_{i,j',k'}(\frac{|S_{i}|_{i}^{2}\Theta_{i}}{|S_{i}|_{i}^{2}+\e^{2}}+\frac{|S_{j'}|_{j'}^{2}\Theta_{j'}}
{|S_{j'}|_{j'}^{2}+\e^{2}}+\frac{|S_{k'}|_{k'}^{2}\Theta_{k'}}{|S_{k'}|_{k'}^{2}+\e^{2}}))+l^{-1}\theta \\
&+(1-e^{-t})(\chi-\de'\sum_{i}\frac{|S_{i}|_{i}^{2}\Theta_{i}}{|S_{i}|_{i}^{2}+\e^{2}})
\end{align*}
and by Assumption \ref{mainass} (4) and Remark \ref{changinghermitianmetrics} we may thus assume $\de, \de'$ are sufficiently small so that $\tilde{\dr}_{t,\de',\e}$ is uniformly positive on $X'$ for all $t\in[0,+\infty)$, as $D_{i},D_{j'},D_{k'}$ are exceptional divisors.      Similar to the proof of Theorem \ref{thm-lb2},
choose a \ka\, form $\kappa$ on $X'$ so that $\kappa\leq\frac{\tilde{\dr}_{t,\de',\e}}{2}$ for $t\in[0,+\infty)$ and consider the following complex \MA\ equation
\begin{equation}\label{eq:nor ma-compare}
(\kappa+\ddb\tilde{\psi}_{\de',\e})^{n}=\frac{C_{\de',\e}e^{\tilde{\psi}_{\de',\e}}\prod_{k}|S_{k}|_{k}^{2a_{k}}\Omega'}
{\prod_{i}(|S_{i}|_{i}^{2}+\e^{2})^{1-\de'}\prod_{j}|S_{j}|_{j}^{2b_{j}}}
\end{equation}
where $C_{\de',\e}$ is chosen such that
$$[\kappa]^{n}=C_{\de',\e}\int_{X'}\frac{\prod_{k}|S_{k}|_{k}^{2a_{k}}\Omega'}
{\prod_{i}(|S_{i}|_{i}^{2}+\e^{2})^{1-\de'}\prod_{j}|S_{j}|_{j}^{2b_{j}}}$$ and moreover $C_{\de',\e}$ are uniformly bounded positive constants whose bounds are independent of $\e.$ By the $L^{\infty}$-estimates in \cite{EGZ} there exists a unique solution $\tilde{\psi}_{\de',\e}$ which is uniformly bounded independent of $\e.$ Similarly, set $\tilde{\xi}_{\de',\e}(t):=\tilde{\phi}_{\de',\e}(t)-\tilde{\psi}_{\de',\e}$ it follows that $\tilde{\xi}(t)$ satisfies the following equation
\begin{equation}\label{eq:nor ma flow-compare}
\left\{
   \begin{array}{ll}
     \displaystyle\dt\tilde{\xi}_{\de',\e} &=\displaystyle\log\frac{((\kappa+\ddb\tilde{\psi}_{\de',\e})+(\tilde{\dr}_{t,\de',\e}-\kappa)+\ddb\tilde{\xi}_{\de',\e})^{n}}
     {(\kappa+\ddb\tilde{\psi}_{\de',\e})^{n}}-\tilde{\xi}_{\de',\e}+\log C_{\de',\e}\\ \;\\
     \tilde{\xi}_{\de',\e}(0)&= \tilde{\phi}_{\de',\e}(0)-\tilde{\psi}_{\de',\e}.
   \end{array}
 \right.
\end{equation}
 by a maximum principle argument as in the proof of Lemma \ref{lem-upper 1} we may conclude that
$$\tilde{\xi}_{\de',\e}(t)\geq e^{-t}(\tilde{\phi}_{\de',\e}(0)-\tilde{\psi}_{\de',\e})
+(1-e^{-t})\log C_{\de',\e}.$$
Combine this with same initial bound \eqref{eq:lower II-1} and it follows that
\begin{align}\label{eq:lower II-app-nor}
\tilde{\varphi}_{\e, l}(t)&\geq -C(\de,\de')+\de'(1-e^{-t})\sum_{i}\log(|S_{i}|_{i}^{2}+\e^{2})\nonumber\\
&+\de e^{-t}\sum_{i,j',k'}(\log(|S_{i}|_{i}^{2}+\e^{2})+\log(|S_{j'}|_{j'}^{2}+\e^{2})+\log(|S_{k'}|_{k'}^{2}+\e^{2}))\nonumber\\
&\geq-C(\de,\de')+\de'(1-e^{-t})\sum_{i}\log|S_{i}|_{i}^{2}+\de e^{-t}\sum_{i,j',k'}(\log|S_{i}|_{i}^{2}+\log|S_{j'}|_{j'}^{2}+\log|S_{k'}|_{k'}^{2}).
\end{align}
Recall that $\tilde{\varphi}_{\e, l}\searrow\tilde{\varphi}$ as $\e\searrow 0$ and $l\to \infty$.  In particular $\tilde{\varphi}$ satisfies the same lower bound estimate above on $X'\setminus \tilde{E}\times[0, \infty)$ and thus extends to be in $L^{\infty}(X'\setminus \tilde{E})$ for each $t$, and is bounded away from $\pi^{-1}X_{lc}$ uniformly for $t\in [0, \infty)$.  This completes the proof of Theorem \ref{thm-main2}.


\section{\ka-Ricci flow through birational surgeries with lc singularities (proof of Theorem \ref{thm-main3})}

In this section we will discuss the behaviour of the \ka-Ricci flow with log canonical singularities when birational surgeries happen. As \cite{ST3}, we will relate the \ka-Ricci flow with lc singularities to the minimal model program with scaling. First we briefly recall some related background materials in MMP with scaling and recommend \cite{BCHM,ST3} for more details. For a $\mathbb{Q}$-factorial projective variety $X$ with log terminal singularities, when $K_{X}$ is not nef, there exist extremal rays generated by algebraic curves which have negative intersection number with $K_{X}$ by cone theorem. Let $H$ be a $\mathbb{Q}$-semi-ample and big divisor, by rationality theorem $\lambda_{0}:=\inf\{\lambda>0|\lambda H+K_{X}\;is\;nef\}$ is a positive rational number. By Kawamata base point free theorem, the divisor $\lambda_{0}H+K_{X}$ is semi-ample and induces a morphism $\pi:=\Phi_{|m(\lambda_{0}H+K_{X})|}: X\to Y$ for sufficiently large $m\in\mathbb{N}$ which contracts all curves $C$ satisfying $(\lambda_{0}H+K_{X})\cdot C=0$ to points. Now considering the image of this contraction morphism, there are several different cases:
\begin{itemize}
 \item If $dim Y<dim X,$ $X$ is called a Mori fiber space where all fibers are Fano varieties.
 \item If $dim Y=dim X,$ i.e., $\pi$ is a birational morphism, depending on the dimension of the exceptional locus in $X,$ there are two different cases:
  \begin{itemize}\item If $dim Exc(\pi)=dim Y-1,$ $\pi$ is a divisorial contraction and we replace $X$ by $Y$ and let $H_{Y}$ be a strict transformation of
  $\lambda_{0}H+K_{X}$ by $\pi.$ Then continue the process to $(Y,H_{Y}).$
  \item If $dim Exc(\pi)<dim Y-1,$ $\pi$ is a small contraction and there exists a flip $X\to X^{+}$ as \eqref{eq:flip}. Let $H_{X^{+}}$ be a strict transformation of $\lambda_{0}H+K_{X}$ and continue the process to $(X^{+},H_{X^{+}}).$
  \end{itemize}
\end{itemize}

By \cite{Bir,Fu1,HX}, this program exists for $\mathbb{Q}$-factorial varieties with log canonical singularities and birational surgeries including divisorial contractions and flips also exist when there exist extremal rays and the induced contraction morphism is birational.

 As \cite{ST3}, we can relate the above to  \ka-Ricci
flow as follows.  Note that in Theorem \ref{thm-main1}, the solution $\omega(t)$ to the \ka-Ricci flow exists up to the rational time $T_{0}:=\sup\{t>0|H+tK_{X}\;is\;nef\}=\frac{1}{\lambda_{0}}$.    Also, $H+T_0 K_{X}$ is semi-ample and thus for a sufficiently large $m\in\mathbb{N}$ there exists a morphism $$\pi:=\Phi_{|m(H+T_{0}K_{X})|}: X\to Y.$$   Our goal here will be to show that there exists a limit current $\omega(T_0)$ which pushes down to $Y$ such that we can continue to flow the push down limit by \ka-Ricci flow on $Y$ using Theorem  \ref{thm-main1}.  For this, we will assume that $H+T_{0}K_{X}$ is in fact big and nef, $\pi$ is birational and induces divisorial contractions or flips as above for the projective variety $X$ with log canonical singularities.   Resolve the singularities of $Y$ and image of $Exc(\pi)$ we have $$\mu:\tilde{X}\to X\to Y$$ which satisfies that
\begin{itemize}
\item $\tilde{X}$ is smooth and $\tilde{\pi}:=\pi^{-1}\circ \mu : \tilde{X}\to X$ is a resolution of singularities of $X$.
\item there exists an effective divisor $E_{Y}$ on $\tilde{X}$ such that $(\pi^{-1}\circ \mu)^{*}[H+T_{0}K_{X}]-\de[E_{Y}]$ is ample for any positive $\de\ll 1$ and $supp(E_{Y})$ coincides with $Exc(\pi^{-1}\circ \mu).$
\end{itemize}

Recall that in proving Theorem \ref{thm-main1} we established the  a solution to the following complex \MA\ flow equation:
\begin{equation}\label{eq:ma flow 12}
\left\{
   \begin{array}{ll}
     \displaystyle\dt\varphi &=\displaystyle\log\frac{(\hat{\dr}_{t}+\ddb\varphi)^{n}}{\Omega}\\
     \varphi(0) &= \varphi_{0},
   \end{array}
 \right.
\end{equation}
on  $X\times[0,T_{0})$ where $\hat{\dr}(t):=\dr_{0}-tRic(\Omega)$ and $\Omega$ is the so-called adapted measure on $X$ (from \cite{ST3}) satisfying
$\pi^{*}\Omega=\frac{\prod_{k}|S_{k}|_{k}^{2a_{k}}}{\prod_{i}|S_{i}|_{i}^{2}\prod_{j}|S_{j}|_{j}^{2b_{j}}}\tilde\Omega$ for the resolution $\mu$.  Moreover, the solution $\varphi(t)$ is smooth on $X_{reg}\times(0,T_{0})$ and gives rise to a current with zero Lelong number.  In particular, $\varphi$ was constructed as the push forward of $\varphi'(t) +\eta\sum_{j}|S_{j}|_{j}^{2(1-b_{j})}-t \log \log^{2}|S_{i}|_{i}^{2}$, under the resolution  $\tilde{\pi}$, where $\varphi'$ was a solution to \eqref{eq:ma flow 2} on $(\tilde{X}\setminus E_Y )\times[0, T_0)$  (see Remark 3.4).

As \cite{ST3}, we have the following estimates:
\begin{lemma}\label{lem-sing time1}
Let $\varphi\in L^{\infty}_{loc}((X\setminus X_{lc}))\times[0,T_{0})\bigcap C^{\infty}(X_{reg}\times(0,T_{0}))$ solve \eqref{eq:ma flow 12} in the sense of Theorem  \ref{thm-main1} as above.  Then
\begin{enumerate}
\item $|\varphi|_{L^{\infty}}(K\times[0,T_{0}))\leq C_{K}$ for any $K\subset\subset(X\setminus\pi^{-1}(Y_{lc}));$
\item $|\varphi|_{C^{k}}(K\times[0,T_{0}))\leq C_{K,k}$ for any $K\subset\subset(X_{reg}\setminus Exc(\pi))$ and $k\in\mathbb{N}.$
\item for sufficiently small $\delta>0$ we have $\varphi' \geq \delta \log \| S_{E_Y} \|^2 +C_\delta$ on $(\tilde{X}\setminus E_Y) \times[0, T_0)$ for some constant $C_{\delta}$.
\end{enumerate}
\end{lemma}
\begin{proof}
Recall that $\tilde{\pi}$ in turn was a constructed as a smooth local limit $$\varphi'_{l,v,\e}\overset{C^{\infty}(X'\setminus\tilde{E})\times   \color{black}{ (0, T_0)}\color{black}{}  }\longrightarrow  \varphi' $$ as $v, \e\to 0$ and $l\to \infty$, where each $\varphi'_{l,v,\e}(t)$ solves the approximate equation \eqref{eq:ma flow 4} on $\tilde{X}\times[0, T_0)$.

 Now the estimates for $\varphi'_{l,v,\e}$ in established \S 3.2 (and the subsequent lower bound in \S 4.4) depended crucially on Assumption \ref{changinghermitianmetricsass} which could only be made on $(\tilde{X}\setminus \tilde{E})\times[0, T']$ for a time $T'<T_0$. Our current setting however, will allow us to replace the time interval $[0, T']$ with $[T_0/2, T_0]$ in Assumption \ref{changinghermitianmetricsass} provided we replace $\tilde{E}$ with the possibly larger set $E_Y$, and as a result the a priori estimates derived in \S 3.2 will in fact hold uniformly as $T'\to T_0$.  This will imply the estimates in the Lemma after letting the parameters $l,v,\e$ pass to their limits then pushing the solution $\varphi'$ forward to $X$.

We now explain why we may make the above mentioned replacements in  Assumption \ref{changinghermitianmetricsass}.
In the current setting we may choose the form $\chi$ in \eqref{eq:bg 1} so that  $\tilde{\pi}^{*}\omega_0+T_0 \chi$ is non-negative on $X'$, and our assumptions on $E_Y$ imply that for $\de$ sufficiently small we have

\begin{equation}\label{finalcd1}
	\tilde{\pi}^{*}\omega_0 +T_0 \chi + \de \ddb \log |S_{E_Y}|^2 \geq c_{\de} \theta 	
\end{equation}
for some $c_{\de}>0$ where $\theta$ is a fixed \ka\, form on $\tilde{X}$, and it follows that for $\de$ sufficiently small we may also have 

\begin{equation}\label{finalcd2}
	\tilde{\pi}^{*}\omega_0 +\frac{T_0}{2} \chi + \de \ddb \log |S_{E_Y}|^2 \geq c_{\de} \theta.	
\end{equation}
Indeed, we can write the LHS of \eqref{finalcd2} as $1/2(	\tilde{\pi}^{*}\omega_0+T_0 \chi) +(1/2)\tilde{\pi}^{*}\omega_0+\de \ddb \log |S_{E_Y}|^2 $
then note that the middle term is non-negative while the first plus last term is positive on $X'$ by \eqref{finalcd2} provided $\de$ is sufficiently small.

 Now using \eqref{finalcd1} and \eqref{finalcd2}, we may use the same reasoning following Assumption \ref{changinghermitianmetricsass} to conclude that the forms in (i), (ii), (iii) of the assumption are \ka\, on $X'\setminus E_Y$ for all $t\in [T_0/2, T_0]$.

\end{proof}
Now we have the following corollary which describes the limit behaviour of the \ka-Ricci flow at the singular time $t=T_{0}:$
\begin{corollary}\label{cor-sing time}
Let $\varphi_{T_{0}}:=\lim\limits_{t\nearrow T_{0}}\varphi(t)$ on $X,$ then the limit current $\dr(T_{0})=\hat{\dr}_{T_{0}}+\ddb\varphi_{T_{0}}$ descends to
a semi-positive current with zero Lelong number on $Y.$
\end{corollary}
\begin{proof}
From Lemma \ref{lem-sing time1}, $\varphi_{T_{0}}\in PSH(X,\hat{\dr_{T_{0}}})\bigcap L^{\infty}_{loc}(X\setminus\pi^{-1}(Y_{lc}))$ is also smooth in $X_{reg}\setminus Exc(\pi)$ and gives rise to a current with zero Lelong number.  Note by definition we have
$ [\hat\omega(T_0)]=[H+T_0 K_X]$.   On the other hand, from the birational morphism $\pi:=\Phi_{|m(H+T_{0}K_{X})|}: X\to Y$ we see that $[H+T_0 K_X]$ also contains the pull back $\pi^{*}\hat{\dr}_{Y}$ where $\hat{\dr}_{Y}$ is the restriction to $Y$ of a multiple of Fubini-Study metric on $\mathbb{CP}^{N_{m}}$.   Thus on any fiber of $\pi$ we must have $[\hat\omega(T_0)]=[\pi^{*}\hat{\dr}_{Y}]=0$ and just as in the proof of  Theorem 1.1, we may conclude  that $ \hat\omega(T_0)+\ddb\varphi(T_0)$ descends
 to Y as in the Corollary while $\varphi_{T_{0}}$ descends to a potential in $L^{\infty}_{loc}(Y\setminus Y_{lc}).$
\end{proof}
\begin{remark}\label{rk-sing time}
We note here that at the singular time $t=T_{0}$ the set where the local potential is $-\infty$ could be larger than $X_{lc}.$ The new generated locus may come
from the contraction $\pi.$ The local potential along the whole fiber of $\pi$ which has nonempty intersection with $X_{lc}$ will be $-\infty.$
\end{remark}
To show that the \ka-Ricci flow could be extended through the divisorial contractions or flips, similar to \cite{ST3}, we only need to check whether the new initial metric satisfies the conditions of Theorem \ref{thm-main1} on the new variety. In particular, we have
\begin{theorem}\label{thm-surgery}
Given a $\mathbb{Q}$-factorial projective variety $X$ with log canonical singularities and a $\mathbb{Q}$-semi-ample divisor $H,$ let $\dr(t)$ be the solution to the \ka-Ricci flow on $X\times[0,T_{0})$ with
$\dr(0)\in[H]$ and $T_{0}:=\sup\{t>0|H+tK_{X}\;is\;nef\}$  as in Theorem \ref{thm-main1}. Suppose $H+T_{0}K_{X}$ is big and semi-ample and induces a birational morphism
$\pi:X\to Y$.  Let $\dr(T_0)$ descend to the semi-positive current $\dr_{Y}$ on $Y$ as in Corollary \ref{cor-sing time}.  Then
\begin{enumerate}
\item if $\pi$ is a divisorial contraction, there exists a solution to the \ka-Ricci flow on $Y$ starting with $\dr_{Y};$
\item if $\pi$ is a small contraction and there exists a flip $\bar{\pi}=\pi^{-1}\circ\pi^+:X^{+}\to X$ defined as \eqref{eq:flip} in Theorem \ref{thm-main3} with the property that $X^{+}_{lc}\bigcap Exc(\pi^{+})=\varnothing,$ there exists a solution $\dr^{+}(t)$ to the \ka-Ricci flow on $X^{+}$ starting with $\pi^{+*}\dr_{Y}$ and $\dr^{+}(t)$ converges to $\pi^{+*}\dr_{Y}$ as $t\searrow T_{0}$ in both current and $C^{\infty}(X^{+}_{reg}\setminus Exc(\pi^{+}))$-senses.
\end{enumerate}
\end{theorem}
\begin{proof}
In \cite{ST3}, to continue the \ka-Ricci flow after birational surgeries, they need to verify the $L^{p}$-integrable condition of the new measure for $p>1.$
However, by Theorem \ref{thm-main1} we only need to verify that the new initial metric is a current with zero Lelong number and local potential in $L^{\infty}_{loc}$ away from the log canonical locus. For divisorial contraction case, by Lemma \ref{lem-sing time1} and Corollary \ref{cor-sing time} those
conditions are satisfied automatically on $Y$ so the \ka-Ricci flow can be continued on $Y$ directly. For flip case, the assumption $X^{+}_{lc}\bigcap Exc(\pi^{+})=\varnothing$ guarantees that $\pi^{+*}\dr_{Y}$ satisfies the initial condition of Theorem \ref{thm-main1} and all the conclusion follows from
Theorem \ref{thm-main1} and continuity properties at the initial time in section 4.
\end{proof}
\begin{remark}\label{rk-flip}
The assumption $X^{+}_{lc}\bigcap Exc(\pi^{+})=\varnothing$ could be dropped. Actually when the flip exists for a $\mathbb{Q}$-factorial variety with log canonical singularities, the log canonical loci of $X,Y,X^{+}$ are essentially isomorphic. We thank Professor Chenyang Xu for pointing out this property.
\end{remark}
As \cite{ST3}, we can also find a good initial semi-ample divisor $H$ such that at each singular time the induced contraction only contracts exact one
extremal ray and performs corresponding birational surgery. This process will finally terminate in finite steps when we arrive at a minimal model or a Mori fiber space with log canonical singularities.

\section{Further Discussions}

This paper is only a starting point of studying MMP with log canonical singularities by the \ka-Ricci flow. We will briefly discuss some further problems here.

First, instead of a variety, actually the pairs with the form $(X,D)$ are more frequently studied in MMP, where $X$ is the variety we studied in this paper and
$D$ is an effective simple normal crossing divisor on $X$ with the form that $D=\sum_{i}a_{i}D_{i}$ where $D_{i}$ are irreducible and $a_{i}\in(0,1]$ are rational numbers. In this case as the twisted canonical line bundle is $[K_{X}+D],$ we could design a twisted \ka-Ricci flow with conical and cusp singularities:
\begin{equation}\label{eq:twisted krf}
\dt\dr(t)=-Ric(\dr(t))+\sum_{i}2\pi a_{i}[D_{i}].
\end{equation}
Such types of twisted \ka-Ricci flow in manifold case have been studied in \cite{CLS,LZ,LiuZ,Shen}.
By the log resolution $\pi:X'\to X$ of the pair $(X,D),$ we have the general adjunction formula
\begin{equation}\label{eq:adjunction 3}
K_{X'}=\pi^{*}(K_{X}+\sum_{i}a_{i}D_{i})+\sum_{j}b_{j}E_{j},
\end{equation}
where some $E_{j}$ are strict transformations of $D_{i}$ and some are exceptional divisors. Combine the techniques in Theorem \ref{thm-main1} and \cite{CLS,Shen} we could show that the twisted \ka-Ricci flow exists whenever the corresponding cohomology class of the evolving metric is nef.
Moreover we could show that along $D_{i}\bigcap X_{reg}$ the evolving metric simultaneously has conical singularities with angle $2\pi(1-a_{i})$ when $a_{i}\in(0,1)$ and cusp singularities when $a_{i}=1.$

In section 5 we showed the convergence of the \ka-Ricci flow on semi-log canonical models. One problem is that what is the long time behaviour of the \ka-Ricci flow on general minimal varieties with log canonical singularities. As \cite{BBEGZ} showed that the Kodaira dimension of such varieties cannot be $-\infty,$ then what are the long time behaviours for nonnegative Kodaira dimensions. For smooth minimal manifold case this problem has been studied in \cite{ST1,ST2} and for the varieties with log terminal singularities this has been studied in \cite{EGZ1,ST3,SY2}. Moreover, the geometric convergence problem with singularities is still quite challenging, see the last section of \cite{ST3} for discussions in smooth case.

\end{document}